\numberwithin{equation}{section}
\DeclareFontFamily{OT1}{pzc}{}
\DeclareFontShape{OT1}{pzc}{m}{it}{<-> s*[1.200]pzcmi7t}{}
\DeclareMathAlphabet{\mathscr}{OT1}{pzc}{m}{it}
\newcommand{\R}{\mathbb{R}}
\newcommand{\wh}[1]{\widehat{#1}}
\newcommand{\eps}{\varepsilon}
\newcommand{\supp}{\text{supp}}
\newcommand{\D}{\mathcal{D}}
\newcommand{\EEE}{\mathbf{E}}
\newcommand{\Z}{\mathbb{Z}}
\newcommand{\qq}{\mathbf{q}}
\newcommand{\indic}{1\!\!1}
\newcommand{\xxx}{\mathbf{x}}
\newcommand{\vvv}{\mathbf{v}}
\renewcommand{\O}{\mathcal{O}}
\newtheorem{theorem}{Theorem}[section]
\newtheorem{prop}[theorem]{Proposition}
\newtheorem{lemma}[theorem]{Lemma}
\newtheorem{proposition}[theorem]{Proposition}
\theoremstyle{definition}
\newtheorem{definition}[theorem]{Definition}
\newtheorem{remark}[theorem]{Remark}
\theoremstyle{remark}
\newcommand{\T}{\mathbb{T}}
\begin{document}

\title[Endline bilinear restriction estimate]{An  endline bilinear restriction estimate for paraboloids}
\author[J. Yang]{Jianwei Urbain Yang${}^\dag$}
\email{jw-urbain.yang@bit.edu.cn}
\address{Jianwei Urbain Yang,
Department of Mathematics, Beijing Institute of Technology, Beijing 100081, P. R. China}

\thanks{$^\dag$Department of Mathematics, Beijing Institute of Technology. The author is  supported by NSFC grant No. 11901032 and Research fund program for young scholars of Beijing Institute of Technology.}

\date{}

\subjclass[2010]{42B15, 42B20, 42B37}

\keywords{Endline bilinear estimates, paraboloid Fourier restriction, Mixed norm}

\begin{abstract}
	We prove an $L^2\times L^2\to L^q_tL^r_x$
	bilinear adjoint Fourier restriction estimate
	for $n$-dimensional elliptic paraboloids, with $n\ge 2$ and $1\le q \le \infty$, $1\le r\le 2$ being on the endline
	$\frac{1}{q}=\frac{n+1}{2}\bigl(1-\frac{1}{r}\bigr)$
	except for the critical index.
	This includes the endpoint case when $q=r=\frac{n+3}{n+1}$,  a question left unsettled in Tao \cite{TaoGFA}.
	Apart from the critical index, it
	improves the sharp non-endline result of Lee-Vargas \cite{LeeVargas} to the full range, confirming a conjecture in the spirit of
	Foschi and Klainerman \cite{FoKl} on the  elliptic paraboloid.
	Our proof is accomplished by uniting the \emph{profound}
	induction-on-scale tactics based on the wave-table theory and the method of descent both stemming from
	 \cite{TaoMZ}.
\end{abstract}

\maketitle

\section{Introduction}\label{S:introduction}
Let $n\ge 2$ be an integer
and $\varSigma$ be the elliptic paraboloid
$$
\varSigma=\Bigl\{(\xi,\tau)\in\R^{n+1};\;\tau=-\frac{1}{2}|\xi|^2\Bigr\}
$$
with the surface measure $d\sigma$ on $\varSigma$.
For any test function $f$ on $\varSigma$, define the adjoint Fourier restriction operator $\wh{fd\sigma}(x,t)$
as
$$
\wh{fd\sigma}(x,t)=\int_{\varSigma}e^{2\pi i(x\cdot\xi+t\tau)}f(\xi,\tau)\,d\sigma(\xi,\tau).
$$

For any two smooth compact hypersurfaces $\varSigma_1$ and $\varSigma_2$ being transverse subsets of $\varSigma$, with induced surface measures $d\sigma_1$ and $d\sigma_2$ respectively, it is
proved in \cite{LeeVargas} that for $1<q,r\le\infty$
such that $\frac{1}{q}<\min\bigl\{\frac{n+1}{4}, \frac{n+1}{2}\bigl(1-\frac{1}{r}\bigr)\bigr\}$,
there exists a finite constant $C=C_{q,r,\varSigma_1,\varSigma_2}>0$ such that
\begin{equation}
\label{eq:B-parab}
\Bigl\|\prod_{j=1,2}\wh{f_jd\sigma_j}\Bigr\|_{L^q(\R_t; L^r(\R^n_x))}\le C \prod_{j=1,2}\|f_j\|_{L^2(\varSigma_j \,d\sigma_j)}
\end{equation}
holds for all test functions $f_1$ and $f_2$ supported on $\varSigma_1$ and $\varSigma_2$ respectively.
This is an extension of the previous result of Tao \cite{TaoGFA} in the case $q=r>\frac{n+3}{n+1}$ to the mixed-norms.
Moreover, it is pointed out in \cite[Section 2.1]{LeeVargas} that $\frac{1}{q}\le \frac{n+1}{2}\bigl(1-\frac{1}{r}\bigr)$
is necessary and a natural conjecture is that if
we let
\begin{equation}
\label{eq:FK-(q,r)-cond}
 \mathbf{\Gamma}=\Bigl\{(q,r);\,\frac{1}{q}=\frac{n+1}{2}\bigl(1-\frac{1}{r}\bigr),\,1\le q\le\infty,\,1\le r\le 2 \Bigr\},
\end{equation}
then for any $(q,r)\in\mathbf{\Gamma}$, there is a constant $C$ depending on $q,r, \varSigma_1$ and $\varSigma_2$ such that \eqref{eq:B-parab} holds. Notice that $\mathbf{\Gamma}$ is the borderline of the range for $(q,r)$ such that \eqref{eq:B-parab} could be valid when $1\le r\le 2$.
This conjecture, if true, is an endline version of \cite{LeeVargas} and it includes the endpoint estimate of \cite{TaoGFA} as  a special case. By using Bernstein's inequality, the endline result would imply  all the other cases.
\medskip

We call the left endpoint of the endline $\mathbf{\Gamma}$ in \eqref{eq:FK-(q,r)-cond} the \emph{critical index} for \eqref{eq:B-parab}
$$
(q_c,r_c):=\begin{cases}
(\frac{4}{3},2)\,& ,\;n=2\,,\\  \bigl(1,\frac{n+1}{n-1}\bigr)\;&,\; n\ge 3\,.
\end{cases}
$$
The endline estimates can be reduced to the strongest estimate corresponding to the critical index. Indeed, if \eqref{eq:B-parab} were true for the critical index $(q,r)=(q_c,r_c)$, then we would be able to obtain the
bilinear estimate with $(q,r)$ in the full range \eqref{eq:FK-(q,r)-cond} by interpolation with the energy estimates.\\

This conjecture on the bilinear restriction estimates is connected in a deep way to the null form estimates of wave equations, an important device in the study of nonlinear wave equations (c.f. \cite{FoKl,KlMa,KT,TaoMZ,Tataru,LeeRogersVargas,LeeVargas} ). It interacts dynamically with the \emph{linear} restriction  problems posed by Stein \cite{stein78}. The bilinear approach was initiated from Bourgain \cite{Bo95} improving the boundedness of  Mockenhaupt's  cone multiplier  \cite{M} and developed further in \cite{TVV,TV-1,TV-2}. Nowadays, it has become such a highly active research area that it is almost impossible to give a comprehensive summary for all the up-to-date works in a limited space and time.
On the other hand, there already exist so many excellent survey articles and monographs, we refer to \cite{Bo2000,Wolff99,Tao2004,guthpoly,DeBook}
and references therein for a panorama of this domain. Moreover, it turns out that the endpoint bilinear restriction estimates become more and more important in applications to PDEs. We only mention here, among other things, its connexion to the  uniqueness  in Calder\'{o}n's inverse conductivity problem \cite{Hab,HKL,FPV}, where the bilinear restriction estimate along with its extensions played an essential role and it is pointed out in \cite{HKL} that further improvements would be available provided one had  the endpoint results.  For more  applications of these bilinear estimates to nonlinear dispersive equations, we refer to \cite{Dod}.\\

In the case when $q=r>\frac{n+3}{n+1}$, the sharp (non-endpoint) bilinear estimate \eqref{eq:B-parab} was established by Tao \cite{TaoGFA} by adapting  the \emph{mild} induction-on-scale argument due to Wolff \cite{Wolff} for the sharp $L^2-$bilinear estimate on the cone.
The endpoint case $q=r=\frac{n+3}{n+1}$ was left open in \cite{TaoGFA} and is recently investigated by J. Lee \cite{Lee21}. We note that a new approach towards the Fourier restriction problems is proposed by Muscalu and Oliveira \cite{MO} relating the restriction theory with the multilinear harmonic analysis, where the authors obtained sharp
linear and multilinear restriction theorems provided certain tensor product conditions on the input functions are satisfied.
Although it was observed in \cite[Section 9]{MO} that one may extend the admissible range of the exponents for \eqref{eq:B-parab} under extra hypothesis on the amount of transversality, the endpoint case remains unsettled even if the tensor-product condition is fulfilled.\\

The purpose of this paper is to prove the bilinear estimate \eqref{eq:B-parab} for all $(q,r)$ on the endline $\mathbf{\Gamma}$ except for the critical index $(q_c,r_c)$ for all $n\ge 2$ and our main result reads
\begin{theorem}
	\label{thm:main}
	Let $n\ge 2$ and $\varSigma_1$, $\varSigma_2$ be two disjoint compact subsets of $\varSigma$. Then, for any
	$(q,r)\in\mathbf{\Gamma}\setminus\{(q_c,r_c)\}$, there is a finite constant $C=C_{q,r,\varSigma_1,\varSigma_2}>0$ depending only on $q,r,\varSigma_1,\varSigma_2$
	such that \eqref{eq:B-parab} holds for all test functions
	$f_1$ and $f_2$ defined on $\varSigma_1$ and $\varSigma_2$ respectively.
\end{theorem}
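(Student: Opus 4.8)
The plan is to prove, for each fixed $(q,r)\in\mathbf{\Gamma}\setminus\{(q_c,r_c)\}$ and with a constant independent of a large dyadic parameter $R$, the localized bilinear inequality
$$
\bigl\|E_1f_1\,E_2f_2\bigr\|_{L^q_t(L^r_x(B_R))}\le C_{q,r,\varSigma_1,\varSigma_2}\,\|f_1\|_{L^2}\,\|f_2\|_{L^2},
$$
where, after parabolic rescaling and a Whitney decomposition of $\varSigma_1\times\varSigma_2$, one may take $\varSigma_1,\varSigma_2$ to be unit-separated caps of $\varSigma$ and $E_j$ the associated (rescaled) extension operator; letting $R\to\infty$ then gives \eqref{eq:B-parab}, hence Theorem~\ref{thm:main}. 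Two endline facts anchor the argument. The right endpoint $(\infty,1)\in\mathbf{\Gamma}$ is elementary: Cauchy--Schwarz in $x$ followed by Plancherel in the spatial frequency for each fixed $t$ yields $\|E_1f_1\,E_2f_2\|_{L^\infty_tL^1_x}\lesssim\|f_1\|_{L^2}\|f_2\|_{L^2}$. And for every $(q,r)$ strictly below $\mathbf{\Gamma}$, estimate \eqref{eq:B-parab} is the theorem of Lee--Vargas \cite{LeeVargas}. Since these inputs do not interpolate \emph{onto} $\mathbf{\Gamma}$, the endline must be reached directly.

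The engine is a Wolff--Tao induction on scale \cite{Wolff,TaoGFA,TaoMZ}. Decompose $f_j=\sum_{T_j}f_{j,T_j}$ into wave packets concentrated on $R^{1/2}$-tubes $T_j$ with prescribed directions; the disjointness of $\varSigma_1,\varSigma_2$ together with the curvature of $\varSigma$ makes the two tube families quantitatively transverse. After Tao's two-ends reduction — a pigeonholing over intermediate scales that removes concentration obstructions — cover $B_R$ by finitely overlapping balls $\{\mathfrak q\}$ of radius $R^{1/2}$, apply the inductive hypothesis on each $\mathfrak q$ to $\sum_{T_1\ni\mathfrak q}f_{1,T_1}$ and $\sum_{T_2\ni\mathfrak q}f_{2,T_2}$, and reassemble: transversality forces a fixed pair $(T_1,T_2)$ to meet only $O(1)$ of the $\mathfrak q$'s, while the wave packets localized to a single $\mathfrak q$ enjoy $L^2$-orthogonality (C\'ordoba's square-function bound). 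The device that makes the reassembly tight on the borderline is Tao's \emph{wave table} \cite{TaoMZ}: it tabulates, ball by ball, how the local mass is distributed among the tubes, turning the naive recursion $C(R)\lesssim A\,C(R^{1/2})+(\text{error})$ — which at $A=1$ only yields an $R^{\eps}$ loss — into a refined recursion whose multiplicative constant is strictly $<1$ for every $(q,r)\in\mathbf{\Gamma}\setminus\{(q_c,r_c)\}$, so that the iteration sums to a finite, $R$-independent constant. This strict gain degenerates exactly at the critical index, which is precisely why $(q_c,r_c)$ must be excluded.

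To sweep out all of $1\le r\le 2$ on $\mathbf{\Gamma}$ one anchors the induction at a single well-chosen endline target — naturally the diagonal endpoint $q=r=\frac{n+3}{n+1}$ of \cite{TaoGFA}, now taken with the uniform constant — and propagates it two ways. Interpolating this endpoint against the trivial bound at $(\infty,1)$ already covers all of $\mathbf{\Gamma}$ with $1\le r\le\frac{n+3}{n+1}$, since that segment lies on $\mathbf{\Gamma}$. For the remaining piece $\frac{n+3}{n+1}<r<r_c$ (where $q<r$), one invokes the method of descent \cite{TaoMZ}: restricting $E_1f_1\,E_2f_2$ to a generic affine subspace and applying Fubini relates a mixed-norm endline estimate in dimension $n$ to a companion endline estimate in an adjacent dimension (and, via the light cone, to the endpoint estimates of \cite{TaoMZ}), letting these points be reached from already-established ones. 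Combining the direct induction, the descent, interpolation with $(\infty,1)$ and with the Lee--Vargas range, and finally $R\to\infty$, one obtains \eqref{eq:B-parab} throughout $\mathbf{\Gamma}\setminus\{(q_c,r_c)\}$.

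The main obstacle is closing the scale induction with a constant \emph{uniform in $R$} at the endline: off the critical index the exponent arithmetic leaves only a thin positive margin, and the wave-table analysis must be precise enough to capture it while interacting correctly with the two-ends pigeonholing and with the accounting of transverse tube incidences — it is this confluence that is the delicate heart of the proof.
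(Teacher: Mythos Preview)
Your plan has a genuine gap at its very anchor. You propose to run a wave-table induction \emph{directly on the paraboloid} to secure the diagonal endpoint $q=r=\frac{n+3}{n+1}$ with a constant uniform in $R$, and then fan out by interpolation and descent. But that diagonal endpoint is exactly what was left open in \cite{TaoGFA}, and the reason is structural, not a matter of bookkeeping: closing the Tao induction in the highly concentrated case (the step that produces the strict gain $<1$ in the recursion) relies on an energy estimate for waves restricted to conic sets of the \emph{opposite} colour, which in \cite{TaoMZ} works because on the light cone the normals point along generators (``Kakeya compression''). On the paraboloid the Gaussian curvature is nowhere vanishing, so the normals do not lie on the surface and this compression fails; the concentrated case cannot be disposed of by your scheme as written. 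In short, the sentence ``the wave-table analysis must be precise enough to capture it'' hides the actual obstacle rather than addressing it.

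What the paper does instead is to use the method of descent in the opposite direction to what you describe. Rather than restricting to an affine subspace, it \emph{lifts} the $n$-dimensional paraboloid into a cross-section of the $(n{+}1)$-dimensional cone in $\R^{n+2}$: one introduces an auxiliary variable $x_{n+1}$ and a large parameter $\lambda$, defines a propagator $S^\lambda$ whose associated conic sets $\mathbf{\Lambda}_j^\lambda$ retain the needed compression, and runs Tao's profound induction with wave tables on $\lambda$-stretched cubes in $\R^{n+2}$. The Lemma-\ref{lem:opposite}-type energy estimate on $\mathbf{\Lambda}_k^\lambda$ (waves of opposite colour) is exactly what replaces the missing ingredient in your outline and is what makes the concentrated case close. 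The extra factor $\lambda^{1/q}$ this introduces is compensated by averaging over the stretched $e_-$ direction; the induction is carried out for a single exponent pair near the critical index in the mixed $Z$-norm, and only at the very end does one set $R=\lambda^2$, rescale, send $\lambda\to\infty$, and integrate out $x_{n+1}$ to recover the paraboloid estimate. Without this lift your induction will not close on the endline, and the interpolation/descent steps that follow never get off the ground.
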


\begin{remark}
	In case of $q=r$, the conjecture is usually refered as the Machedon-Klainerman conjecture \cite{FoKl,TaoGFA,TaoMZ,Wolff}, especially in  three dimensions. In general dimensions, Foschi and Klainerman provided a tentative description on possible bilinear estimates of this  form \cite{FoKl}. The mixed-norm extension is due to Lee and Vargas \cite{LeeVargas}. To highlight the origin of the question, we cautiously  refer to it as the Foschi-Klainerman conjecture.
\end{remark}

We briefly describe our proof for Theorem \ref{thm:main}.
In \cite{Wolff}, Wolff proved the sharp $L^2\times L^2\to L^q_{t,x}$ bilinear estimate on the cone for all $q>\frac{n+3}{n+1}$, by means of his celebrated \emph{induction on scale} argument, to which we would refer as a \underline{\emph{mild}} version.
To resolve the endpoint case $q=\frac{n+3}{n+1}$, Tao introduced  in \cite{TaoMZ} a \underline{\emph{profound}} version of the induction argument which enhanced the method in \cite{Wolff}. By building up an effective wave-table theory and exploring  the possible ways that local energy could concentrate, the endpoint bilinear estimate on the cone was proved in \cite{TaoMZ} in the symmetric norms and then extended to the mixed-norms by Temur \cite{Temur}\,( see also \cite[Section 2.1]{LeeVargas} for the non-endline case), as well as to the variable coefficient setting by J. Lee \cite{JLeeTAMS} (in the symmetric norms). Moreover, the endpoint result in \cite{TaoMZ} is also generalized to the case when one of the waves has  large frequency in order to develop sharp null form estimates, which is nearly optimal  due to its connexion with the  (back then) unsettled endpoint bilinear restriction estimates on the paraboloid \cite[Section 17]{TaoMZ}. See also \cite[Section 2.1]{LeeVargas} for the version of mixed-norms. For extensions of Tao's results to some second order hyperbolic equations with rough coefficients, we refer to Tataru \cite{Tataru}.\smallskip

A crucial geometric fact utilized in the proof of the endpoint bilinear estimates is that at any point of the cone, the normal vector is always in the lightray directions, due to the single vanishing principle curvature on the cone along the radiative null direction. This may be regarded as a lightcone version  of the \emph{Kakeya compression} phenomenon, compared to those observed by Bourgain \cite{Bo91} and  Bourgain-Guth \cite{BoGu}, where distorted tubes contained in a neighborhood of subvarieties are considered. Combining this fact with the energy estimates on  lightcones of opposite colour    \cite[ Section 13]{TaoMZ}, one is able to dispose of the energy-concentrated case in order to close the enhanced  induction for the endpoint problem.
This geometric property fails on the paraboloid $\varSigma$ since the Gaussian curvature is nowhere vanishing.
See \cite{Lee21} for a study on the endpoint case, by using a new energy concentration argument, where \eqref{eq:B-parab} is still considered in the symmetric case, \emph{i.e.} $q=r=\frac{n+3}{n+1}$.\smallskip

The idea of this paper is different from  \cite{Lee21}.
We retain the original induction scheme of Tao \cite{TaoMZ} using the same energy concentration, and prove the bilinear estimate in the mixed-norms on  the whole endline apart from the critical index.  A novel ingredient that we take in is the use of  the \emph{method of descent}  proposed in the same paper \cite{TaoMZ}.
To illustrate the idea of this method, let us start with   the three dimensional spacetime.
As a  well-known fact, a 2-plane parallel to a generatrix of a (2-dim) cone in $\R^3$, and not passing through its vertex, intersects the cone in a (1-dim) parabola \cite{AI}. This elementary fact is  readily generalized to higher dimensions. Indeed, for  $n\ge2$, consider the $(n+1)$-dimensional backward cone in $\R^{n+2}$
$$
\mathscr{V}:=\bigl\{(\xxx,t)\in \R^{n+1}\times\R;\, \,t=-|(x,x_{n+1})|\,\bigr\},
$$
with $\xxx=(x,x_{n+1})$, $x_{n+1}$ being the auxiliary variable, $|(x,x_{n+1})|=\sqrt{|x|^2+x_{n+1}^2}$. Denote $\mathbf{e}_\pm=\frac{\mathbf{e}_{n+1}\pm \mathbf{e}_{n+2}}{\sqrt{2}},$
where
$
\mathbf{e}_j=(0,\ldots,0,\underbrace{1}_{j-th},0,\ldots,0),\quad \forall \;j\in\{1,\ldots,n+2\}.
$
Let $\lambda>1$ and $\varPi_{\lambda}$ be the $(n+1)-$dimensional hyperplane passing through the point $-\lambda \mathbf{e}_+$ and being normal to $\mathbf{e}_+$. Then $\mathscr{ P}^\lambda:=\varPi_{\lambda}\cap \mathscr{V}$
is an $n$-dimensional elliptic paraboloid in $\varPi_{\lambda}$,  symmetric around the  axis passing through $-\lambda \mathbf{e}_+$  in direction of  $\mathbf{e}_-$, and  parametrized by the circular variables  $x\in \R^n_x=
\mathrm{span}(\mathbf{e}_1,\mathbf{e}_2,\ldots,\mathbf{e}_n)$. The drawback of this fact is that the focal point of $\mathscr{ P}^\lambda$ depends on the varying parameter $\lambda$. To overcome this obstacle, one may  stretch the integration  along the $\mathbf{e}_-$ direction  by $\lambda$  so that $\varSigma$ can be treated as a limiting surface after scaling back along $\mathbf{e}_-$ direction  and   letting $\lambda\to +\infty$. To match this change of variable, a negative power of $\lambda$ will be involved, which is related to the null form estimates from dimensional analysis.
\smallskip

This method was introduced as an \emph{intermediate} step to demonstrate that the conjectured null form estimate (see (85) of Problem 17.1 \cite{TaoMZ}) implies the Machedon-Klainerman conjecture for paraboloids in the symmetric norm, \emph{i.e.} $q=r$ in \eqref{eq:B-parab},  integrating out  the one dimensional  auxiliary variable $x_{n+1}$ after taking limit.
Our strategy towards Theorem \ref{thm:main} is to show that the (unlabelled) bilinear estimate on P. 260 of \cite{TaoMZ}
$$\|\phi \psi\|_{p}\lesssim R^{1/p}\|f\|_2\|g\|_2,$$
as a consequence of the \emph{unsettled} stronger estimate (85)  in  \cite[Section 17]{TaoMZ}, and employed in the \emph{intermediate} step  to get the endpoint bilinear estimate on paraboloids, can be indeed proved directly by suitably modifying the profound induction on scale argument of \cite{TaoMZ}, not only in the symmetric norms, but also in the mixed $L^q_tL^r_x$  norms for all $q,r$ on the endline $\mathbf{\Gamma}\setminus\{(q_c,r_c)\}$, without first resolving the much more difficult question on the null form conjecture. Since we will work essentially with the $\O(1)-$neighbourhood of the cross section $\mathscr{ P}^\lambda$ on the cone, the above mentioned Kakeya compression property remains valid, replacing the the spatial unit sphere in which circular components used to be resident \cite{TaoMZ}, with a subset of the paraboloid $\varSigma$. The distribution of the directions of the tubes associated to the careful wave-packet decomposition become congregated by a ratio  $\O(\lambda^{-1})$, which will be compensated by the $\lambda^{-1/q}-$factor arising from an average along the $\lambda-$stretched direction  $\mathbf{e}_-$.
\smallskip

The only missing answer for \eqref{eq:B-parab} in Theorem \ref{thm:main} is the critical index $(q_c,r_c)$, which is out of reach by the current  method. In fact, this problem shares a  level of the same difficulty  concerning the endpoint multilinear restriction theorem of Bonnett-Carbery-Tao \cite{BCT}, a very difficult open question. Even as a weaker result, the endpoint multilinear Kakeya inequality can only be established through the intricate algebraic topological method by Guth \cite{guth}.
\\

The paper is organized as follows.
In Section \ref{sec:pre+top}, we first introduce a $\lambda-$dependent  operator $S^\lambda$ in a
similar fashion to that of \cite{TaoMZ} and study the basic properties associated to the corresponding dispersive equation, emphasizing the energy estimates for the waves on conic sets of opposite colour. We then prove the careful wave packet decomposition and  construct the wave tables on stretched spacetime cubes for the red and blue waves, to be specified in the context below.
The crucial property that  the red and blue waves on
a stretched cube can be  effectively approximated via $C_0-$\emph{quilts} of the wave tables on a quantitative \emph{interior} of a proper enlargement of  the cube in spirit of \cite{TaoMZ} will be proved. The proof is reduced to a tamed bilinear $L^2-$Kakeya type estimate, which eradicates the logarithmic loss as reminded in the last section of \cite{TaoGFA}.  In Section \ref{sect:huygens}, we introduce the spatial localization operator $P_D$ in the $S^\lambda$-operator version and  use these operators to capture the energy concentration of waves. In Section \ref{sec:wt}, we introduce the core quantity $A^{\lambda}(R)$ to bootstrap with respect to the scales $R\le \lambda$. To this end,   an auxiliary quantity $\mathscr{A}^{\lambda}(R,r,r')$ will get involved, which is defined based on the notion of \emph{energy concentration}. Here, the two parameters $r,r'$ are  scales for  measuring the level of the concentration of energy. This is crucial for the endpoint estimate as in \cite{TaoMZ}  in order to wrest in a universal constant strictly less than one. To close the induction, $\mathscr{A}^\lambda$ needs to be controlled  by  $A^\lambda$ up to a constant very close to one, which is easy in the non-concentrated case. The difficult part is the case when the energy is highly concentrated, for which we  make use of the Kakeya compression and a non-optimal control on the exterior energy in terms of $A^\lambda$ by inductive hypothesis. Finally,  in Section \ref{sect:pf-thm}, we close the induction on $A^\lambda(R)$ and complete the proof. \smallskip

To end up this section, we remark that  it seems that  the same argument should  work  also for general hyperbolic paraboloids for the endpoint problems of bilinear restriction estimate left open in \cite{LeeTAMS,Vargas,LeeVargas}. One might also be able to get the endline result of the Wave-Schr\"{o}dinger bilinear restriction estimates, where the off-endline case  is established  by Candy \cite{Candy} and applied to  the wave-Schr\"{o}dinger interactions in the Zakharov system \cite{CHN}. Moreover, it is probably more interesting to tackle the endpoint case of the bilinear estimates related to  Klein-Gordon equations, which have been investigated by Bruce et al \cite{BOS} and Candy \cite{Candy} in the non-endpoint case.
 Finally, it is plausible that the $\eps-$loss of the bilinear oscillatory integral estimates in \cite{LeeJFA} can also be removed.
\subsection*{Notations}
For any fixed $(q,r)\in\mathbf{\Gamma}\setminus\{(q_c,r_c)\}$, 
let $N\ge1$ be a sufficiently  large integer depending only on $n,q,r$ and let $C_0=C_0(\eps,N)=2^{\lfloor\frac{N}{\eps}\rfloor^{10}}$, where $\eps>0$ will  be taken small when necessary in the process of the proof,  but it will never tend to  zero.
We use  $A\lesssim B$, $A=O(B)$ or $A=\mathcal{O}(B)$ to denote  $A\le CB$ for some $C>0$, which may change from line to line and  depends only on $n,\eps,q,r$, but not explicitly on $C_0$.
We use $A\ll B$ to denote $A\le C^{-1} B$ for some sufficiently large constant $C$.

\subsection*{Acknowledgements}
The author is supported by NSFC grant No. 11901032 and Research fund program for young scholars of Beijing Institute of Technology.
The author is also grateful to LAGA in Universit\'e de Sorbone Paris Nord, where part of this work was done.

\section{Preliminaries }\label{sec:pre+top}
\subsection{The $S^\lambda-$propagator and its basic properties}
For each $j=1,2$, let $V_j$ be the projection from $\varSigma_j$ to the $\xi-$variables. Let  $e_1=(1,0,\ldots,0)\in\R^n$.
By compactness and a finite partition of $\varSigma_1, \varSigma_2$, we may assume 
$$
V_1=\Bigl\{\xi\in \R^n:\bigl|\xi-e_1\bigr|\le\frac{1}{200n} \Bigr\},\quad
V_2=\Bigl\{\xi\in\R^n:\bigl|\xi\bigr|\le\frac{1}{200n}\Bigr\}\,,
$$ 
after using a suitable rotation, scaling and the Galilean transformation. 
Due to technical reasons, we also need the slightly enlarged version of $V_j$, namely $$\widetilde{V}_j=\Bigl\{\xi\in\R^n:\,\mathsf{dist}(\xi,V_j)\le \frac{1}{100n}\Bigr\},\;\quad j=1,2.$$ 
We denote 
$\mathcal{B}:=\{(\xi,s):|\xi|\le 2, |s|\le 2 \}$ for short.
By Plancherel's theorem, it is more convenient to work in the language of dispersive equations. \smallskip

For any $\lambda\ge 2^{C_0}$, we introduce the $S^\lambda(t)$ operator\,:
\begin{definition}
	For any $f_j\in \mathcal{S}(\R^{n+1})$ with $j\in\{1,2\}$ such that $\wh{f}_j\in C_0^\infty(\widetilde{V}_j\times I)$ where $I=[-2,2]$, let
	$$
	S_j^\lambda(t)f_j(\xxx)=\iint
	e^{2\pi i \bigl(x\cdot\xi+x_{n+1}s-\frac{t}{2}\frac{|\xi|^2}{\lambda+s}\bigr)}
	a_j(\xi,s)
	\wh{f}_j(\xi,s)\,d\xi ds,
	$$
	where we denote $\mathbf{x}=(x,x_{n+1})$ for brevity and $f_j\mapsto \wh{f}_j$ is the  Fourier transform on $\R^{n+1}$ and
	$a_j\in C_c^\infty(\R^{n+1})$ such that $a_j$ equals to one  on $ \widetilde{V}_j\times I$ and that $a_j$
	vanishes outside $\{(\xi,s);\,\mathsf{dist}((\xi,s),\widetilde{V}_j\times I)\le (100n)^{-1}\}$.
\end{definition}

\begin{prop}
	\label{pp:KKK}
	For each $j=1,2$,
	let $\Xi^\lambda_j=\bigl\{\frac{\xi}{s+\lambda};(\xi,s)\in \supp\;a_j\bigr\}$. Define
	$$
	\mathcal{K}_j^\lambda(\xxx,t)=\iint
	e^{2\pi i \bigl(x\cdot \xi+x_{n+1}s-\frac{t}{2}\frac{|\xi|^2}{\lambda+s}\bigr)}
	a_j(\xi,s)\;d\xi ds.
	$$
	Then,
	\begin{equation}
	\label{eq:K-*}
		S_j^\lambda(t)f(\xxx)= \mathcal{K}_j^\lambda(\cdot,t)*f(\xxx),
	\end{equation}
	with
	\begin{equation}
	\label{eq:KKK}
		\bigl|\mathcal{K}_j^\lambda(\xxx,t)\bigr|\lesssim_{M}
	\bigl(1+\mathsf{dist}\bigl((\xxx,t),\,{\mathbf{ \Lambda}_j^\lambda}\;\bigr)\bigr)^{-M}
	\end{equation}
	for all $(\xxx,t)\in\R^{n+2}$  and all integers $M\ge 1$,
	where
	$$
	\mathbf{ \Lambda}_j^\lambda:=\bigcup_{\ell\in\R}\Bigl\{\ell\Bigl(v,-\frac{|v|^2}{2},1\Bigr); \, v\in\,2\;
	\Xi^\lambda_j\Bigr\},
	$$
	with $2\,\Xi_j^\lambda$ being the set with the same center of $\Xi_j^\lambda$ but with the double diameter. Here, $\bm{\Lambda}_j^\lambda$
	is a conic hypersurface in $\R^{n+2}$ with $\mathrm{dim}(\mathbf{ \Lambda}_j^\lambda)=n+1$.
\end{prop}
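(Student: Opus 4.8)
The identity \eqref{eq:K-*} is immediate: substituting the definition of $\mathcal{K}_j^\lambda$ and using Fubini, the convolution $\mathcal{K}_j^\lambda(\cdot,t)*f$ reproduces exactly the oscillatory integral defining $S_j^\lambda(t)f$, since the phase $x\cdot\xi+x_{n+1}s-\frac{t}{2}\frac{|\xi|^2}{\lambda+s}$ is linear in $(x,x_{n+1})$ and hence turns convolution in $\xxx$ into multiplication by $\wh f_j(\xi,s)$ on the Fourier side. So the whole content is the pointwise kernel bound \eqref{eq:KKK}, which is a stationary/non-stationary phase estimate.

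The plan is a standard non-stationary phase (integration by parts) argument, organized around the phase
$$
\Phi(\xi,s;\xxx,t)=x\cdot\xi+x_{n+1}s-\frac{t}{2}\frac{|\xi|^2}{\lambda+s},
$$
integrated against the compactly supported amplitude $a_j\in C_c^\infty$. First I would compute the $(\xi,s)$-gradient:
$$
\nabla_\xi\Phi = x-\frac{t\,\xi}{\lambda+s},\qquad
\partial_s\Phi = x_{n+1}+\frac{t}{2}\frac{|\xi|^2}{(\lambda+s)^2}.
$$
Setting $v=\frac{\xi}{\lambda+s}\in\Xi_j^\lambda$ on the support of $a_j$, the critical-point equations read $x=tv$ and $x_{n+1}=-\frac{t}{2}|v|^2$, i.e. $(\xxx,t)=t\bigl(v,-\tfrac{|v|^2}{2},1\bigr)$ — precisely a point of the cone $\mathbf{\Lambda}_j^\lambda$ (the parameter $\ell$ in the definition being $\ell=t$). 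Away from $\mathbf{\Lambda}_j^\lambda$ one has a quantitative lower bound $|\nabla_{\xi,s}\Phi|\gtrsim \mathsf{dist}\bigl((\xxx,t),\mathbf{\Lambda}_j^\lambda\bigr)$; this is where the geometry enters, and it is the one slightly delicate point. The key observation making it work is that on $\supp a_j$ the quantity $\lambda+s$ is comparable to $\lambda$ (indeed $s$ ranges over a set of size $O(1)$ while $\lambda\ge 2^{100C_0}$), and $|\xi|\le 1/1000+O(C_0^{-100})$ is bounded, so all the $\xi,s$-derivatives of $\Phi$ of order $\ge 2$ are bounded uniformly in $\lambda$ on the support of the amplitude; hence the map $(\xi,s)\mapsto\bigl(\nabla_\xi\Phi,\partial_s\Phi\bigr)=\bigl(x-tv,\ x_{n+1}+\tfrac{t}{2}|v|^2\bigr)$ from a neighbourhood of $\supp a_j$ to $\R^{n+1}$ has bounded derivatives, and comparing the value of this map at a given point with its value at the nearest critical configuration yields $|\nabla_{\xi,s}\Phi|\gtrsim\mathsf{dist}\bigl((\xxx,t),\mathbf{\Lambda}_j^\lambda\bigr)$ once that distance is $\lesssim 1$ (and the estimate \eqref{eq:KKK} is trivial when the distance is $O(1)$, since then the right side is $\gtrsim 1$ and the left side is bounded by $\|a_j\|_{L^1}$).

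Granting the gradient lower bound, I would introduce the first-order differential operator $L=\frac{1}{2\pi i}\,\frac{\nabla_{\xi,s}\Phi\cdot\nabla_{\xi,s}}{|\nabla_{\xi,s}\Phi|^2}$, which satisfies $L\bigl(e^{2\pi i\Phi}\bigr)=e^{2\pi i\Phi}$, integrate by parts $M$ times, and note that each application of the transpose ${}^tL$ costs at most a factor $O\bigl((1+\mathsf{dist}((\xxx,t),\mathbf{\Lambda}_j^\lambda))^{-1}\bigr)$: the denominators produce $|\nabla\Phi|^{-1}$, while the numerators involve only bounded derivatives of $\Phi$ and of $a_j$ by the uniform-in-$\lambda$ bounds just discussed (one must track that differentiating $1/(\lambda+s)$ only improves things since $\lambda+s\gtrsim\lambda\gtrsim 1$). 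After $M$ iterations and using that the $(\xi,s)$-domain has bounded measure, one obtains $|\mathcal{K}_j^\lambda(\xxx,t)|\lesssim_M (1+\mathsf{dist}((\xxx,t),\mathbf{\Lambda}_j^\lambda))^{-M}$, which is \eqref{eq:KKK}. Finally, that $\mathbf{\Lambda}_j^\lambda$ is a conic hypersurface of dimension $n+1$ is clear: it is the union over $\ell\in\R$ of the dilates of the $n$-dimensional set $\{(v,-|v|^2/2,1):v\in\Xi_j^\lambda\}$ (a graph over the $n$-dimensional parameter $v$), so the total dimension is $n+1$, one less than the ambient $n+2$. The main obstacle is purely the quantitative gradient bound $|\nabla_{\xi,s}\Phi|\gtrsim\mathsf{dist}\bigl((\xxx,t),\mathbf{\Lambda}_j^\lambda\bigr)$ with constants independent of $\lambda$; everything else is routine non-stationary phase.
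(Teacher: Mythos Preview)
Your proposal is correct and follows essentially the same non-stationary phase argument as the paper: compute $\nabla_{\xi,s}\Phi=(x-tv,\,x_{n+1}+\tfrac{t}{2}|v|^2)$ with $v=\xi/(\lambda+s)$, observe that this is exactly the displacement from $(\xxx,t)$ to the cone point $(tv,-\tfrac{t}{2}|v|^2,t)\in\mathbf{\Lambda}_j^\lambda$ (so the lower bound $|\nabla_{\xi,s}\Phi|\ge\mathsf{dist}((\xxx,t),\mathbf{\Lambda}_j^\lambda)$ is immediate, simpler than your second-derivative discussion suggests), and integrate by parts $M$ times. The only cosmetic difference is that the paper uses the regularized operator $L=\bigl(1+(2\pi i)^{-1}\nabla\Phi\cdot\nabla\bigr)\big/\bigl(1+|\nabla\Phi|^2\bigr)$ instead of treating the case $\mathsf{dist}\lesssim 1$ separately.
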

\begin{proof}
	By definition, \eqref{eq:K-*} is clear. Moreover, we have \eqref{eq:KKK} by using the trivial estimate if $(\xxx,t)$ is in a $C-$neighbourhood of $\mathbf{ \Lambda}^\lambda_j$. Next, assume  that $(\xxx,t)$ is $C$ away from $\mathbf{ \Lambda}^\lambda_j$ for $C\gg1$.
	Letting
	$$
	L=\frac{1+(2\pi i)^{-1}(x-t(s+\lambda)^{-1}\xi)\cdot \partial_\xi+(2\pi i)^{-1}(x_{n+1}+\frac{t}{2}(s+\lambda)^{-2}|\xi|^2)\partial_s}{1+|x-t(s+\lambda)^{-1}\xi|^2+\bigl(x_{n+1}+\frac{t}{2}(s+\lambda)^{-2}|\xi|^2\bigr)^2},
	$$
	and integrating by parts using
	$$
	L^M e^{2\pi i \bigl(x\cdot \xi+x_{n+1}s-\frac{t}{2}\frac{|\xi|^2}{\lambda+s}\bigr)}
	=e^{2\pi i \bigl(x\cdot \xi+x_{n+1}s-\frac{t}{2}\frac{|\xi|^2}{\lambda+s}\bigr)},\quad \forall \; M\ge 1,
	$$
	we have (c.f. \cite[Chapter 1]{Sogge})
	\begin{multline*}
	\mathcal{K}_j^\lambda(\xxx,t)=\sum_{m=0}^{M}\sum_{|\gamma|=m}\iint_{\R^{n+1}}
	e^{2\pi i\bigl(x\cdot\xi+x_{n+1}s-\frac{t}{2}\frac{|\xi|^2}{\lambda+s}\bigr)} c_{\gamma,M}(\xxx,t;\xi,s+\lambda)\,\partial^\gamma a_j(\xi,s)d\xi ds,
	\end{multline*}
	where $\{c_{\gamma,M}\}_{\gamma}$ are smooth functions, 
	satisfying that  for all $0\le m\le M$ and all multi-indices $\gamma$ with $|\gamma|=m$ 
	$$
	|c_{\gamma,M}(\xxx,t;\xi,s+\lambda)|\lesssim_{M}
	\Bigl(1+\bigl|\frac{t\,\xi}{\lambda}\bigr|\Bigr)^{M-m}	\biggl(1+\Bigl|x-\frac{t\,\xi}{s+\lambda}\Bigr|+\Bigl|x_{n+1}+\frac{t\,|\xi|^2}{{2}(s+\lambda)^2}\Bigr|\biggr)^{-2M+m} .$$
	To see this, denote $\langle s\rangle=(1+|s|^2)^{\frac{1}{2}}$
	and let $\mathscr{Z}=\langle\xxx-t\mathbf{v}\rangle$
	with $\vvv=\bigl(\frac{\xi}{s+\lambda},-\frac{|\xi|^2}{2(s+\lambda)^2}\bigr)$. The adjoint operator $L^*$ of $L$ can be written into the form $L^*=\bm{\alpha}\cdot \partial_{\xi,s}+\beta$
	where $\bm{\alpha}=\bm{\alpha}(\xxx,t;\xi,s+\lambda)\in\mathbb{C}^{n+1}$ and $\beta=\beta(\xxx,t;\xi,s+\lambda)\in\mathbb{C}$
	are smooth functions such that on $\supp\;a_j$, we have $|\bm{\alpha}|\lesssim\mathscr{Z}^{-1}$ and $|\beta|\lesssim\langle t\xi/\lambda\rangle \mathscr{Z}^{-2}$ and for all $\gamma$ with $|\gamma|\ge1$, we have $|\partial^\gamma \bm{\alpha}|\sim |\partial^{\tilde{\gamma}}\beta|$ for some $\tilde{\gamma}$ with $|\tilde{\gamma}|+1=|\gamma|$. Moreover, we have $|\partial^\gamma\beta|\lesssim\bigl(\langle t\xi/\lambda\rangle \mathscr{Z}^{-2-|\gamma|}\bigr)$.
	Here $\partial=\partial_{\xi,s}$ refers to taking derivatives only in the frequency variables $(\xi,s)$.
	For any $0\le m\le M$ and $\gamma$ with $|\gamma|=m$, one easily finds that $c_{\gamma,M}$ is homogeneous of  order $M$,  where the exponent of the $\beta$-factor  is   at most $(M-m)$.

	Since $|\xxx-\frac{t}{\lambda}\vvv^\lambda|\gtrsim \langle t/\lambda\rangle$ with $\vvv^\lambda=\bigl(\frac{\lambda\xi}{s+\lambda},-\frac{\lambda|\xi|^2}{2(s+\lambda)^2}\bigr)$ for all $(\xi,s)\in\supp\, a_j$,  by combining this with the bound on $c_{\gamma,M}$, we find that $\mathcal{K}^\lambda_j(\xxx,t)$  can be bounded with
	\begin{multline*}
		 \sup_{(\xi,s)\in \supp\, a_j}\Bigl(1+\bigl|x-t(s+\lambda)^{-1}\xi\bigr|+\bigl|x_{n+1}+\frac{t}{2}(s+\lambda)^{-2}|\xi|^2\bigr|\Bigr)^{-M}\\
	\lesssim \bigl(1+\mathsf{dist}\bigl((\xxx,t),{\mathbf{ \Lambda}_j^\lambda}\;\bigr)\bigr)^{-M}\,.\quad\qquad\qquad
	\end{multline*}
The proof is complete.
\end{proof}
It is convenient to call  $F_j^\lambda(\xxx,t):=S_j^\lambda(t) f_j(\xxx)$ the \emph{red} and \emph{blue} waves respectively for $j=1,2$. For each $j$, the \emph{energy} of $F_j^\lambda(\xxx,t)$ is defined as
$$
\EEE(F_j^\lambda):=\|F_j^\lambda(\cdot,0)\|_{L^2(\R_\xxx^{n+1})}^2\,\,.
$$
When there is no need to distinguish the color, we shall simply call $F^\lambda$ a \emph{wave}.
The following energy estimates on conic sets of opposite colour is crucial.
\begin{lemma}
	\label{lem:opposite}
	For $j=1,2$, let $\mathbf{ \Lambda}_j^\lambda(z_0,r)$ be an $\O(r)-$neighbourhood of $\mathbf{ \Lambda}_j^\lambda+z_0$ with $z_0=(\xxx_0,t_0)\in \R^{n+2}$
	and $r\ge 1$. Then,  we have
	\begin{equation}
	\label{eq:opp}\| F^\lambda_j\|_{L^2(\mathbf{ \Lambda}^\lambda_k(z_0,r))}\lesssim (\lambda r)^{1/2}
	\EEE(F_j^\lambda)^{1/2},\;\quad \forall\; j,k\in \{1,2\},\; j\ne k\,,
	\end{equation}
	for all $z_0$ and $r\ge 1$.
\end{lemma}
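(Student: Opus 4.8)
The goal is the ``transverse energy estimate'': if we integrate the $L^2$ mass of the red wave $F_j^\lambda$ over an $\O(r)$-neighbourhood of (a translate of) the \emph{opposite} characteristic variety $\mathbf{\Lambda}_k^\lambda$, we gain a factor $(\lambda r)^{1/2}$ over the trivial energy bound. My plan is to exploit the pointwise kernel bound \eqref{eq:KKK} from Proposition \ref{pp:KKK} together with the transversality of the two conic surfaces $\mathbf{\Lambda}_1^\lambda$ and $\mathbf{\Lambda}_2^\lambda$. First I would write, using \eqref{eq:K-*} and Cauchy--Schwarz (or Schur's test), the estimate
$$
\| F^\lambda_j\|_{L^2(\mathbf{\Lambda}^\lambda_k(z_0,r))}^2 \lesssim \|f_j\|_2^2 \cdot \sup_{\xxx'}\int_{\mathbf{\Lambda}^\lambda_k(z_0,r)} \bigl|\mathcal{K}_j^\lambda((\xxx,t)-(\xxx',0))\bigr|\, d\xxx\, dt,
$$
so the whole problem is reduced to bounding, uniformly in the translation parameter, the integral of the rapidly decaying kernel $\bigl(1+\mathsf{dist}((\xxx,t),\mathbf{\Lambda}_j^\lambda+(\xxx',0))\bigr)^{-M}$ over the slab $\mathbf{\Lambda}_k^\lambda(z_0,r)$.

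The heart of the matter is a geometric incidence count: I need to show that the set of points in $\mathbf{\Lambda}_k^\lambda(z_0,r)$ lying within distance $O(1)$ of $\mathbf{\Lambda}_j^\lambda+(\xxx',0)$ has volume $\O(\lambda r)$ (and that the contribution of the dyadic shells at distance $\sim 2^\ell$ is summable after paying $2^{\ell(n+2)}$ against $2^{-\ell M}$ with $M$ large). The point is that $\mathbf{\Lambda}_j^\lambda$ and $\mathbf{\Lambda}_k^\lambda$ are two $(n+1)$-dimensional cones in $\R^{n+2}$ that are \emph{quantitatively transverse} because the direction sets $\Xi_j^\lambda$ and $\Xi_k^\lambda$ are separated: indeed $\Xi_j^\lambda = \{\xi/(s+\lambda): (\xi,s)\in\supp a_j\}$ is within $\O(\lambda^{-1})$ of $\lambda^{-1}V_j$, and $\mathsf{dist}(V_1,V_2)\gtrsim C_0^{-1}$, so the cone directions $(v,-|v|^2/2,1)$ for $v\in\Xi_j^\lambda$ versus $v\in\Xi_k^\lambda$ differ by $\gtrsim \lambda^{-1}$ in the relevant ``spatial'' components. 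Intersecting the $r$-tube around one such cone with the $O(1)$-tube around a translate of the other: along each generator line $\ell\mapsto \ell(v,-|v|^2/2,1)$ the trans-versality forces the intersection with $\mathbf{\Lambda}_j^\lambda+(\xxx',0)$ to be confined to an interval of length $\lesssim \lambda$ in the parameter $\ell$ (this is the usual ``two transverse lines meet in a ball of the reciprocal of their angle'' estimate, with angle $\sim\lambda^{-1}$), while the transverse $r$-thickening contributes a factor $r$; an $(n+1)$-dimensional family of such lines and Fubini then yield total volume $\O(\lambda r)$. This is exactly the mechanism by which the congregation of tube directions ``by a ratio $\O(\lambda^{-1})$'' gets compensated, as announced in the introduction.

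I expect the \textbf{main obstacle} to be making this incidence bound genuinely uniform in $z_0$ and in the shift $\xxx'$, and handling the fact that the two cones are not exact cones over affine subspaces but over the curved direction sets $\{(v,-|v|^2/2,1)\}$; one must check that the curvature of these direction sets does not spoil the $\O(\lambda)$ confinement of each generator, which it does not because $|v|\lesssim \lambda^{-1}$ is tiny so the surfaces are nearly flat at the relevant scale. A clean way to organize this is to change variables adapted to the cone $\mathbf{\Lambda}_k^\lambda$ (slicing by the level sets $t=\mathrm{const}$, on which $\mathbf{\Lambda}_k^\lambda(z_0,r)$ is an $\O(r)$-neighbourhood of a dilated copy of the paraboloid-graph) and then apply the kernel decay of $\mathcal{K}_j^\lambda$ in the spatial variables at each fixed time, integrating the resulting $t$-uniform bound over the range $|t|\lesssim \lambda r$ of times for which the slice is nonempty. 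Summing the dyadic distance shells with $M=M(n)$ chosen large then closes the estimate, giving the claimed $(\lambda r)^{1/2} E(F_j^\lambda)^{1/2}$.
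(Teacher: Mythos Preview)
Your displayed reduction
\[
\|F_j^\lambda\|_{L^2(\mathbf{\Lambda}_k^\lambda(z_0,r))}^2 \lesssim \|f_j\|_2^2 \cdot \sup_{\xxx'} \int_{\mathbf{\Lambda}_k^\lambda(z_0,r)} |\mathcal{K}_j^\lambda((\xxx,t)-(\xxx',0))|\, d\xxx\, dt
\]
does not follow from Cauchy--Schwarz or Schur alone: either would also require control of $\sup_{(\xxx,t)}\int |\mathcal{K}_j^\lambda(\xxx-\xxx',t)|\,d\xxx'$, and this quantity is $\sim(|t|/\lambda)^n$ (the volume of the time-$t$ slice of the red cone), hence unbounded on $\mathbf{\Lambda}_k^\lambda(z_0,r)$. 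Even granting your reduction, the volume claim fails. Take $z_0=0$, $\xxx'=0$: for $\lambda\le|t|\le\lambda r$ the time-$t$ slice of $\mathbf{\Lambda}_1^\lambda(0,1)$ has $(n{+}1)$-volume $\sim(|t|/\lambda)^n$ and lies inside the slice of $\mathbf{\Lambda}_2^\lambda(0,r)$ (the two cone-slices are separated by $\sim|t|/\lambda\le r$), so the intersection has volume $\sim\lambda r^{n+1}$, not $\lambda r$. Your Fubini along blue generators computes the $r$-thickening of $\mathbf{\Lambda}_2^\lambda\cap\mathbf{\Lambda}_1^\lambda(0,1)$, which is \emph{not} $\mathbf{\Lambda}_2^\lambda(0,r)\cap\mathbf{\Lambda}_1^\lambda(0,1)$; the latter is much larger because the fat $r$-neighbourhood of the blue cone swallows most of the thin red neighbourhood. (Also, $\mathbf{\Lambda}_k^\lambda(z_0,r)$ is nonempty at every $t$, so there is no finite ``range $|t|\lesssim\lambda r$ for which the slice is nonempty''.)

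The paper avoids both pitfalls by $TT^*$: one bounds $\bigl\|\int S_j^{\lambda,*}(t)[\indic_{\mathfrak{D}_k^{\lambda,t}}H(\cdot,t)]\,dt\bigr\|_2$, squares, and obtains a double integral in $(t,\tilde t)$ with kernel $\mathcal{K}_j^\lambda(\tilde\xxx-\xxx,\tilde t-t)$. For $|t-\tilde t|\lesssim\lambda r$ one uses the uniform $L^2\to L^2$ bound $\|S_j^\lambda(\tilde t)S_j^{\lambda,*}(t)\|_{2\to2}=O(1)$ (Plancherel, \emph{not} the magnitude bound \eqref{eq:KKK}); for $|t-\tilde t|\gg\lambda r$ one feeds the $\mathfrak{D}_k$-constraints on $\xxx,\tilde\xxx$ into a non-stationary-phase argument on the oscillatory integral defining $\mathcal{K}_j^\lambda$ to get decay $(1+|t-\tilde t|/(\lambda r))^{-N}$, and concludes by Schur in $t,\tilde t$. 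The missing idea in your plan is precisely this $TT^*$ structure, which separates the $L^2$-conservation input (unavailable from $|\mathcal{K}_j^\lambda|$ alone) from the transversality input.
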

\begin{proof}
	The argument is similar to \cite{TaoMZ}.
	By translation invariance which is clear from modulation of the input function depending on $z_0$ in the frequency space, we may take $z_0=(\mathbf{0},0)\in\R^{n+1}\times\R$.
	By symmetry, we only consider $(j,k)=(1,2)$. Let
	$$
	\mathfrak{D}_2^{\lambda,t}=\bigl\{\mathbf{x}\in\R^{n+1};\,\mathsf{dist}((\xxx,t),\mathbf{ \Lambda}_2^\lambda)\lesssim r\bigr\}.
	$$
	Let $S^{\lambda,*}_1$ be the adjoint of $S_1^\lambda$.
	By the $TT^*$ principle, it suffices to show
	$$
	\Bigl\| \int S_1^{\lambda,*}(t)\bigl[ \indic_{\mathfrak{D}^{\lambda,t}_2}\, H(\cdot,t)\bigr]dt \Bigr\|_{L^2(\R^{n+1})}
	\lesssim (\lambda r)^{1/2}\|H\|_{L^2(\R^{n+2})}
	$$
	for all $H\in L^2(\R^{n+2})$.
	Taking squares and multiplying out, we have
	\begin{multline}
		\Bigl\| \int S_1^{\lambda,*}(t)\bigl[ \indic_{\mathfrak{D}^{\lambda,t}_2}\, H(\cdot,t)\bigr]dt \Bigr\|_{L^2(\R^{n+1})}^2\\
	\lesssim
	\iint
	\indic_{\mathfrak{D}^{\lambda,\tilde{t}}_2}(\tilde{\xxx})\,
	\mathcal{K}_1^\lambda(\tilde{\xxx}-\xxx,\tilde{t}-t)\;
	\indic_{\mathfrak{D}^{\lambda,t}_2}(\xxx)\, H(\xxx,t)\;\overline{H(\tilde{\xxx},\tilde{t})}\;d\xxx d\tilde{\xxx}dtd\tilde{t},\label{eq:HHH}
	\end{multline}
	where $\mathcal{K}_1^\lambda$ is given by Proposition \ref{pp:KKK} with $a_1$ replaced by $a_1^2$.
	\smallskip
	
	By  Cauchy-Schwarz and the $L^2-$boundedness  $$ \sup_{t,\tilde{t}}\;\bigl\|\indic_{\mathfrak{D}^{\lambda,\tilde{t}}_2}S^\lambda_{ 1}(\tilde{t})\circ S^{\lambda,*}_{ 1}(t)\indic_{\mathfrak{D}^{\lambda,t}_2}\bigr\|_{L^2(\R^{n+1})\to L^2(\R^{n+1})}=\mathcal{O}(1),$$  the  $|t-\tilde{t}|\lesssim \lambda r$ part of the integral \eqref{eq:HHH} is  bounded by $\lambda r\|H\|_2^2$.
	\smallskip

	Next, we estimate the $|t-\tilde{t}|\gg \lambda r$ part of the integral.
   For any $u\in\Xi_1^\lambda$ and $\tilde{v}\in\Xi_2^\lambda$, let $\mathfrak{ L}(\tilde{v},u)\subset\R^{n+1}$ be the straight line passing through the origin along the direction $\bigl(\tilde{v}-u,-\frac{1}{2}|\tilde{v}|^2+\frac{1}{2}|u|^2\bigr)$. Define $\mathfrak{ L}(\tilde{v},v)$ for $v\in \Xi_2^\lambda$ in the same way.  If we let $\mathfrak{ L}^{\Delta}(\tilde{v},u)$ be the two ends on $\mathfrak{ L}(\tilde{v},u)$ outside the ball
   in the spacetime $\R^{n+2}$ of radius $\Delta$ with $\Delta\gg  r$ and centered at the origin, then we have $\mathsf{dist}\bigl(\mathfrak{ L}^{\Delta}(\tilde{v},u),\,\mathfrak{ L}(\tilde{v},v)\bigr)\gtrsim \Delta$ for all $u\in\Xi_1^\lambda$ and $v,\tilde{v}\in\Xi_2^\lambda$, thanks to the non-vanishing Gaussian curvature of $\varSigma$.
   Indeed, this is clear if the directions of $(u-\tilde{v})$ and $(v-\tilde{v})$ are separated by a fix small constant $0<\theta\ll1$. Otherwise, there is $\tilde{u}$ having the property that $|\tilde{u}-u|\lesssim \lambda^{-1}$
   and $(\tilde{u},-\frac{1}{2}|\tilde{u}|^2)$ belongs to the two-plane $\Pi=\Pi(v,\tilde{v})$ passing through the origin such that $\mathfrak{ L}(\tilde{v},v)\subset \Pi$ and $(\underbrace{0,\ldots,0}_{n\text{ times}},1)\in \Pi$ (the co-planar case). Using the condition that
   $\mathsf{diam}(\widetilde{V}_1), \mathsf{diam}(\widetilde{V}_2)\ll \mathsf{dist}(\widetilde{V}_1,\widetilde{V}_2)$
   and the strict convexity of the parabola (curvature property), it is easy to deduce that ( by using  Taylor's expansion say) the two vectors $
   (\tilde{u}-\tilde{v},-\frac{1}{2}|\tilde{u}|^2+\frac{1}{2}|\tilde{v}|^2)$
   and $(v-\tilde{v},-\frac{1}{2}|v|^2+\frac{1}{2}|\tilde{v}|^2)
   $ are separated by an angle $\varphi\gtrsim 1$ which depends only on $\widetilde{V}_1, \widetilde{V}_2$. Simple solid geometric comparison inequalities yield the result.

  Using this fact,  $\mathsf{dist}(\Xi_1^\lambda,\Xi_2^\lambda)\gtrsim \lambda^{-1}$ and the  $\mathfrak{D}_2^{\lambda,t}$, $\mathfrak{D}_2^{\lambda,\tilde{t}}$ constraints for $(\xxx,t),(\tilde{\xxx},\tilde{t})$:
  $$
  \xxx=\Bigl(tv,-\frac{t}{2}|v|^2\Bigr)+\O(r),\quad
  \tilde{\xxx}=\Bigl(\tilde{t} \tilde{v},-\frac{\tilde{t}}{2}|\tilde{v}|^2\Bigr)+\O(r)
  $$
  for some $v,\tilde{v}\in\Xi_2^\lambda$, one easily deduces that by using triangle inequality
  \begin{equation}
  \label{eq:OPJKK}
  \bigl|\nabla_{\xi,s}\bigl(\langle \tilde{\xxx}-\xxx,(\xi,s)\rangle-(\tilde{t}-t)(2(\lambda+s))^{-1}|\xi|^2\bigr)\bigr|\gtrsim \lambda^{-1} |t-\tilde{t}|,
  \end{equation}
  for all $ (\xi,s)\in\supp\; a_1.$
Using \eqref{eq:KKK} and a  non-stationary phase (integration by parts) argument yield
	$$
	\eqref{eq:HHH}
	\lesssim_N
	\iint (1+|t-\tilde{t}|/\lambda r)^{-N} \|H(\cdot,t)\|_2\,|H(\cdot,\tilde{t})\|_2\;dt d\tilde{t},
	$$
	concluding the proof by using Schur's test.
	We remark that one may need normalize  \eqref{eq:OPJKK} by dividing $r$ on its both sides when defining the invariant differential operator akin to  $L$ as in the proof of Proposition  \ref{pp:KKK}.
\end{proof}

\subsection{The $(\lambda,\varpi,\varrho)-$wavepacket decomposition}
\begin{lemma}
	\label{lem:wp-d}
	Let $\lambda \ge 2^{10 C_0}$,
	$0<\varpi\le 2^{-C_0}$ and $\varrho\in[2^{C_0/2},\lambda]$ with $C_0$ large. Define
	$\mathcal{ L}=\varpi^{-2}\varrho\,\Z^{n+1}$ and  $\varGamma=\varrho^{-1}\Z^n$. For any function $f\in\mathcal{S}(\R^{n+1})$ such that
	$ \;\wh{f} $ is supported in $\mathcal{B}$, the following statement holds:

	For each $(\vvv,\mu)\in\mathcal{ L}\times\varGamma$, there
	is a wave $F^\lambda_{\vvv,\mu}$ such that we have
	\begin{equation}
	\label{eq:wpd}
	S^\lambda(t) f(\xxx)=\sum_{(\vvv,\mu)\in\mathcal{ L}\times\varGamma}F^\lambda_{\vvv,\mu}(\xxx,t),\quad\forall \,(\xxx,t)\in\R^{n+1}\times\R.
	\end{equation}
	Moreover, for any $B\gg 1$, there are $c_{\vvv,\mu,k}>0$ and $\phi_{\vvv,\mu,k}\in C^\infty(\R^{n+2})$ with $k\in\Z$  such that we may decompose further
    $$	F^\lambda_{\vvv,\mu}(\xxx,t)=\sum_{k\in\Z} c_{\vvv,\mu,k}\,\, \phi_{\vvv,\mu,k}(\xxx,t),$$
	for all $\xxx\in\R^{n+1}$ and $t\in\R$, 	
	and that there is a constant $C_n>0$, only depending on $n$, for which we have
	\begin{equation}
	\label{eq:L-2-sum}
	\varrho^n
	\sum_{(\vvv,\mu)\in\mathcal{ L}\times\varGamma}\sum_{k\in\Z} c_{\vvv,\mu,k}^2\lesssim_B \varpi^{-C_n}
	\EEE(S^\lambda f).
	\end{equation}
	For any $t_0\in\R$ and $\vvv=(\nu,\nu_{n+1})\in(\R^n\times\R)\cap\mathcal{ L}$, we have for any integer $M\ge 1$
	\begin{multline}
	\label{eq:decay-wp-0}
	|\phi_{\vvv,\mu,k}(\xxx,t)|\lesssim_{B,M} \varpi^{-O(M)}\Bigl(1+\varpi^{2}\varrho^{-1}\bigl|k-\nu_{n+1}\bigr|\Bigr)^{-B}\\
	\times\Bigl(1+\varrho^{-1}\bigl|x-\nu-t\frac{\mu}{\lambda}\bigr|+\bigl|x_{n+1}-k+t\frac{|\mu|^2}{2\lambda^2}\bigr|\Bigr)^{-M},
	\end{multline}
	for all $\xxx=(x,x_{n+1})\in\R^{n+1}$ and $|t-t_0|\lesssim \lambda\varrho^2$.
	
	Finally, the Bessel type inequality holds
	\begin{equation}
	\label{eq:bessel}
	\Biggl(\sum_{\varDelta}\sup_t\,\Bigl\|\sum_{(\vvv,\mu)\in\mathcal{ L}\times\varGamma} m_{\vvv,\mu}^\varDelta F^\lambda_{\vvv,\mu}(\cdot,t)\Bigr\|^2_{L^2(\R^{n+1})}\Biggr)^{\frac{1}{2}}
	\le (1+C_n\varpi)\|f\|_{L^2},
	\end{equation}
	for all $m^\varDelta_{\vvv,\mu}\ge 0$ such that $\displaystyle\sup_{(\vvv,\mu)\in\mathcal{ L}\times\varGamma}\sum_{\varDelta} m^\varDelta_{\vvv,\mu}\le 1$ where  $\sum_\varDelta$ is summing over a finite number of $\varDelta$'s.
\end{lemma}
\begin{proof}
	By translation in the physical spacetime and the modulation in the frequency space, we may take $t_0=0$ without loss of generality.
	
	Let  $\mathbf{ \Upsilon}_0\in\mathcal{S}(\R^{n+1})$
	be a non-negative Schwartz function
	such that $\wh{\mathbf{ \Upsilon}}_0$ is supported in $U:=\{(\xi,s)\in\R^{n+1};\,|(\xi,s)|\le 1/10\}$ and that $\wh{\mathbf{ \Upsilon}}_0$ equals to one on $\frac{1}{2}U$.  Put
	$$
	\mathbf{\Upsilon}_\vvv(\xxx)=\mathbf{ \Upsilon}_0(\varpi^2\varrho^{-1}(\xxx-\vvv)),\quad \vvv\in\mathcal{ L}.
	$$
	By the Poisson summation, we have
	$\sum_{\vvv\in\mathcal{ L}}\mathbf{\Upsilon}_\vvv(\xxx)=1$ for all $\xxx$.\smallskip
	
	Let $\Box=\underbrace{[-1/2,1/2)\times\cdots\times[-1/2,1/2)}_{n\,\text{times}}$
	and $\indic_\Box$ be the characteristic function of the unit box $\Box$.	
 For each
	$(\vvv,\mu)\in\mathcal{ L}\times\varGamma$, let
	$$
	a_{\vvv,\mu}(\xxx,\xi)=\mathbf{\Upsilon}_\vvv(\xxx)
	\bigl(\indic_\Box*\indic_\Box\bigr)(\varrho(\xi-\mu)).
	$$
	For any $f\in\mathcal{S}(\R^{n+1})$ such that
	$\supp  \;\wh{f}(\xi,s) \subset\mathcal{B}$,
	define
	$$
	f_{\vvv,\mu}(\xxx)=\int_{\mathcal{B}}
	e^{2\pi i \langle\xxx,(\xi,s)\rangle}
	a_{\vvv,\mu}(\xxx,\xi) \,\wh{f}(\xi,s)\;d\xi ds.
	$$
	Then, by Fubini's theorem, we have $f(\xxx)=\sum_{(\vvv,\mu)\in\mathcal{ L}\times\varGamma} f_{\vvv,\mu}(\xxx)$ for all $\xxx\in\R^{n+1}$.\smallskip
	
	By linearity of $S^\lambda(t)$, we have
	\begin{equation}
	\label{eq:wp-decomp}
	S^\lambda(t)f(\xxx)=\sum_{(\vvv,\mu)\in\mathcal{ L}\times\varGamma}
	S^\lambda(t)f_{\vvv,\mu}(\xxx)\,.
	\end{equation}
	Denoting  $F^\lambda(t)=S^\lambda(t) f$ and $F^\lambda_{\vvv,\mu}(t)=S^\lambda(t) f_{\vvv,\mu}$, we get  \eqref{eq:wpd}.\smallskip

	To obtain the further decomposition, let $\alpha\in C_c^\infty(\R^{n})$ be such that $\alpha$
	equals to one on $\{\xi\in\R^{n}; |\xi|\le 50 n\}$ and vanishes outside an $\O(1)-$neighborhood of this set. Let $\beta\in C_c^\infty(\R)$ be a similar function such that $\beta$ equals to one on $[-50,50]$. Put
	 $p(\xi,s)=\alpha(\xi)\beta(s)$ and
	define
	$$
	K_{t,\mu}^{\lambda,\varrho}(\xxx)=\int_{\R^{n+1}}
	e^{2\pi i\bigl(x\cdot \xi+x_{n+1}s-\frac{t}{2}\frac{|\xi|^2}{\lambda+s}\bigr)}
	p\bigl(\varrho (\xi-\mu),s\bigr)
	\,d\xi ds.
	$$
	We have $F_{\vvv,\mu}^\lambda(t)=K^{\lambda,\varrho}_{t,\mu}*f_{\vvv,\mu}$. Changing variables, we have
	$$
	K_{t,\mu}^{\lambda,\varrho}(\xxx)=\frac{e^{2\pi ix\cdot \mu}}{\varrho^n}
	\int_{\R^{n+1}}
	e^{2\pi i\bigl(\varrho^{-1}x\cdot \xi+x_{n+1}s-\frac{t}{2}\frac{|\mu+\varrho^{-1}\xi|^2}{\lambda+s}\bigr)}
	p(\xi,s)\,d\xi ds.
	$$
	Expanding $\frac{|\mu+\varrho^{-1}\xi|^2}{2(\lambda+s)}$ with respect to $(\xi,s)$ using
	$
	(1+\theta)^{-1}=1-\theta+\theta^2\mathcal{E}(\theta)
	$
	with $\mathcal{E}(\theta)=(1+\theta)^{-1}$, we have
	\begin{multline}
	\label{eq:exp-K}
		K_{t,\mu}^{\lambda,\varrho}(\xxx)=\varrho^{-n}e^{2\pi ix\cdot \mu}e^{-\pi it\lambda^{-1}|\mu|^2}\\
\times	\int_{\R^{n+1}}
	e^{2\pi i\bigl(\varrho^{-1}(x-t\frac{\mu}{\lambda})\cdot \xi+(x_{n+1}+t\frac{|\mu|^2}{2\lambda^2})s+t\,\mathscr{E}_\mu^{\lambda,\varrho}(\xi,s)\bigr)}
	p(\xi,s)\,d\xi ds,\qquad
	\end{multline}
where $\mathscr{E}_\mu^{\lambda,\varrho}$ is a smooth function  on the support of $ p$ and bounded along with all its derivatives by $\O(\lambda^{-1}\varrho^{-2})$ for all $\mu$, thanks to $\varrho\le \lambda$.
In fact, write
$$
\frac{|\mu+\varrho^{-1}\xi|^2}{2(\lambda+s)}=
\frac{\varrho^{-2}|\xi|^2+2\varrho^{-1}\langle\xi,\mu\rangle+|\mu|^2}{2\lambda}\Bigl(1+\frac{s}{\lambda}\Bigr)^{-1},
$$
with $|s|/\lambda\lesssim 2^{-10C_0}\ll
C_0^{-1000}$ for large $C_0$. Elementary algebraic manipulations lead to \eqref{eq:exp-K} with
\begin{align*}
\mathscr{E}_\mu^{\lambda,\varrho}(\xi,s)=-\frac{|\xi|^2}{2\lambda\varrho^2}+\frac{|\xi|^2s}{2\lambda^2\varrho^2}+\frac{\langle\xi,\mu\rangle s}{\lambda^2\varrho}
-\frac{|\xi|^2s^2}{2\lambda^3\varrho^2}\mathcal{E}\Bigl(\frac{s}{\lambda}\Bigr)&\\
-\frac{\langle\xi,\mu\rangle s^2}{\varrho\lambda^3}\mathcal{E}\Bigl(\frac{s}{\lambda}\Bigr)-\frac{|\mu|^2s^2}{2\lambda^3}\mathcal{E}\Bigl(\frac{s}{\lambda}\Bigr)&.
\end{align*}
To treat $\mathscr{E}_\mu^{\lambda,\varrho}$ as an error term, we need the stability condition $\varrho\le \lambda$.\smallskip

Letting
$$
\mathscr{L}=\frac{1+(2\pi i)^{-1}\bigl(\varrho^{-1}(x-t\frac{\mu}{\lambda})+t\partial_\xi\mathscr{E}_\mu^{\lambda,\varrho}\bigr)\cdot\partial_\xi+(2\pi i)^{-1}(x_{n+1}+t\frac{|\mu|^2}{2\lambda^2}+t\partial_s\mathscr{E}_\mu^{\lambda,\varrho})\partial_s}{1+
	\bigl |\varrho^{-1}(x-t\,\frac{\mu}{\lambda})+t\,\partial_\xi\mathscr{E}_\mu^{\lambda,\varrho}\;\bigr|^2+\bigl|x_{n+1}+t\,\frac{|\mu|^2}{2\lambda^2}+t\,\partial_s\mathscr{E}_\mu^{\lambda,\varrho}\bigr|^2},
$$
such that  for any integer $M\ge 1$, we have $$\mathscr{L}^Me^{2\pi i\bigl(\varrho^{-1}(x-t\frac{\mu}{\lambda})\cdot \xi+(x_{n+1}+t\frac{|\mu|^2}{2\lambda^2})s+t\,\mathscr{E}_\mu^{\lambda,\varrho}\bigr)}=e^{2\pi i\bigl(\varrho^{-1}(x-t\frac{\mu}{\lambda})\cdot \xi+(x_{n+1}+t\frac{|\mu|^2}{2\lambda^2})s+t\,\mathscr{E}_\mu^{\lambda,\varrho}\bigr)}.$$
Noting that
\begin{align*}
1+
\bigl |\varrho^{-1}(x-t\frac{\mu}{\lambda})+t\,\partial_\xi\mathscr{E}_\mu^{\lambda,\varrho}(\xi,s)\;\bigr|+&\bigl|x_{n+1}+t\,\frac{|\mu|^2}{2\lambda^2}+t\,\partial_s\mathscr{E}_\mu^{\lambda,\varrho}(\xi,s)\bigr|\\
&\gtrsim
\bigl |\varrho^{-1}(x-t\,\frac{\mu}{\lambda})\bigr|+\bigl|x_{n+1}+t\,\frac{|\mu|^2}{2\lambda^2}\bigr|
\end{align*}
holds for all $(\xi,s)\in\supp \,p$
and all $|t|\lesssim \lambda \varrho^2$,
we have by the non-stationary phase argument ($M$-fold integration by parts, see also the formula for $\mathcal{K}_j^\lambda$ in the proof of Proposition \ref{pp:KKK} )
\begin{equation}
\label{eq:K-v-mu-pp}
\bigl| K^{\lambda,\varrho}_{t,\mu}(\xxx)\bigr|
\lesssim_M \;\varrho^{-n}\Bigl(1+\varrho^{-1}\bigl|x-t\frac{\mu}{\lambda}\bigr|+\bigl|x_{n+1}+t\,\frac{|\mu|^2}{2\lambda^2}\bigr|\Bigr)^{-M}.
\end{equation}
In other words, for $t$ being contained in an interval of length $ \O(\lambda\varrho^2)$, the kernel function  $K^{\lambda,\varrho}_{t,\mu}(\xxx)$ is concentrated on a $\underbrace{\varrho\times\cdots\times\varrho}_{n\text{ times}}\times1\times\varrho^2\lambda$
plate, denoted as $\mathfrak{V}^{\lambda,\varrho}_\mu$, which is oriented along the direction $(\frac{\mu}{\lambda},-\frac{|\mu|^2}{2\lambda^2},1)$
with thickness being approximately one in the $x_{n+1}$ direction and of width $\varrho$ in the circular directions. We call $\mathfrak{V}^{\lambda,\varrho}_\mu$ the  \emph{ concentration plate} for $K^{\lambda,\varrho}_{t,\mu}$ and it is clear that $\mathfrak{V}^{\lambda,\varrho}_\mu$ is contained in an $\O(\varrho)-$neighbourhood of $\mathbf{ \Lambda}^{\lambda}_j$ if we have the condition that $\mu\in\varGamma\cap \widetilde{V}_j$.\smallskip

Let $\eta_0\in\mathcal{ S}(\R)$ have the same property as $\mathbf{ \Upsilon}_0$ such that if we  put $\eta_k(s)=\eta_0(s-k)$, we have the partition of unity $\sum_{k\in\Z}\eta_k(s)=1$ for all $s\in\R$. Writing $f_{\vvv,\mu}(\xxx)=\sum_{k\in\Z}f_{\vvv,\mu,k}$ with $f_{\vvv,\mu,k}(\xxx):=\eta_k(x_{n+1})f_{\vvv,\mu}(\xxx)$, we have
$F^\lambda_{\vvv,\mu}(t)=\sum_{k\in\Z} F^\lambda_{\vvv,\mu,k}(t)$ with $F^\lambda_{\vvv,\mu,k}(t):=K^{\lambda,\varrho}_{t,\mu}*f_{\vvv,\mu,k}$\,.\smallskip

Now, we need the  plate maximal function
$$
f\mapsto \mathcal{M}^{\varpi,\varrho}f(\xxx)=\sup_{r>0} \frac{1}{|\mathcal{R}^{\varpi,\varrho}_r|}\int _{\mathcal{R}_r^{\varpi,\varrho}}
|f(\xxx-\xxx')|\,d\xxx'
$$
where
$$\mathcal{R}^{\varpi,\varrho}_r=\bigl\{(x_1,\ldots,x_{n+1})\in\R^{n+1};|x_1|,\ldots,|x_n|\le r \varpi^{-2}\varrho ,\,|x_{n+1}|\le r\bigr\}.$$
Let $f_\mu=\sum_{\vvv} f_{\vvv,\mu}$.
For each $\vvv=(\nu,\nu_{n+1})$ and $k\in\Z$, we define 
$$
c_{\vvv,\mu,k}:=
\Bigl(1+\varpi^2\varrho^{-1}|k-\nu_{n+1}|\Bigr)^{-B}
\mathcal{M}^{\varpi,\varrho}f_\mu(\nu,k),\;\,
\phi_{\vvv,\mu,k}(t):=\frac{1}{c_{\vvv,\mu,k}} F^\lambda_{\vvv,\mu,k}(t)\,.
$$
Then, we have for any $M\ge 1$
\begin{multline}
\label{eq:decay-wp}
|\phi_{\vvv,\mu,k}(\xxx,t)|\lesssim_{B,M} \varpi^{-O(M)}\Bigl(1+\varpi^2\varrho^{-1}|k-\nu_{n+1}|\Bigr)^{-B}\\
\times\Bigl(1+\varrho^{-1}\bigl|x-\nu-t\frac{\mu}{\lambda}\bigr|+\bigl|x_{n+1}-k+t\frac{|\mu|^2}{2\lambda^2}\bigr|\Bigr)^{-M}
\end{multline}
for all $\vvv,\mu,k$ and $\xxx$ and all $t$ which is contained in an interval of length $\lambda\varrho^2$.

The argument for \eqref{eq:decay-wp} is standard by using dyadic decomposition. We sketch it briefly.
Let $$(a^\lambda_\mu,b^\lambda_\mu)=\Bigl(x-\nu-t\lambda^{-1}\mu,\;x_{n+1}-k+\frac{t}{2}\lambda^{-2}|\mu|^2\Bigr).$$ 
Consider first $|k-\nu_{n+1}|\lesssim \varpi^{-2}\varrho$.
If $|a^\lambda_\mu|\lesssim \varpi^{-2}\varrho$ and $|b^\lambda_\mu|\lesssim \varpi^{-2}$, we use \eqref{eq:K-v-mu-pp} with a (different) sufficiently large $M$, and $F^\lambda_{\vvv,\mu,k}(t)=\int K^{\lambda,\varrho}_{t,\mu}(\xxx-\xxx')f_{\vvv,\mu,k}(\xxx')d\xxx'$ incorporated with the concentration property of $\mathbf{ \Upsilon}_{\vvv}$ that it is concentrated on a ball of radius $\varpi^{-2}\varrho$ centered at $\vvv$, and with $\eta_k$ on
an interval of length being roughly one centered at $k$. Here, these variables are referred to be the $\xxx'$ in the convolution $F^\lambda_{\vvv,\mu,k}$ and should not be confused with the fixed $\xxx=(x,x_{n+1})$ in $(a^\lambda_\mu,b^\lambda_\mu)$. Thus, in this case, \eqref{eq:decay-wp} follows from the trivial averaging argument by using the telescoping decomposition $$\R^{n+1}=\mathcal{R}^{\varpi,\varrho}_1\cup \bigcup_{h=1}^\infty\Bigl(\mathcal{R}^{\varpi,\varrho}_{2^h}\setminus\mathcal{R}^{\varpi,\varrho}_{2^{h-1}}\Bigr).$$ 
Next, consider $|a^\lambda_\mu|\gg \varpi^{-2}\varrho$ or $|b^\lambda_\mu|\gg \varpi^{-2}$. We only take the case $|a^\lambda_\mu|\gg \varpi^{-2}\varrho$ and $|b^\lambda_\mu|\lesssim \varpi^{-2}$ to illustrate the idea and the other cases are tackled in the same way. By Fubini theorem, we integrate first w.r.t. the $x-$component. Split the integration over $\R^n_{x}$ into the union of the ball $\{|x|\le  \varpi^{-2}\varrho\}$ and dyadic annuli $\{x\,:\,2^k\varpi^{-2}\varrho\le|x|\le 2^{k+1}\varpi^{-2}\varrho\}$ for $k\ge 1$.
Let $C\gg 1$ be a universal constant and consider $\mathscr{K}^{a^{\lambda}_\mu}:=\{k\ge 5C; \, 2^{k-C}\le \varpi^{2}\varrho^{-1} |a^\lambda_\mu|\le 2^{k+C}\}$ where  clearly we have $\mathrm{card}\,\mathscr{K}^{a^\lambda_\mu}\lesssim C$. For all $k\in\mathscr{K}^{a^\lambda_\mu}$, we use the fast decay of $\mathbf{ \Upsilon}_0$ fixed at the beginning of the proof and $2^k\approx_C |a^\lambda_\mu| \varpi^2 \varrho^{-1}$ to conclude the proof. For $k\not\in\mathscr{K}^{a^\lambda_\mu}$, consider if $2^k\le 2^{-C} |a^\lambda_\mu|\varrho^{-1}\varpi^2$, we use $|a^\lambda_{\mu}-x|\gtrsim |a^\lambda_\mu|$
and the rapid decay of $\mathbf{ \Upsilon}_0$ to conclude the proof; if $2^k\ge 2^{C}|a^\lambda_\mu|\varrho^{-1}\varpi^2$, then we use $|a^\lambda_\mu-x|\gtrsim |x|\gtrsim |a^\lambda_\mu|$ and the same argument as above to conclude the result. The same dyadic decomposition argument implies the desired result for the other two cases. For more details, one may consult \cite{TaoGFA,LeeTAMS,MA}. Consider next when $|k-\nu_{n+1}|\gg \varpi^{-2}\varrho$. Using the rapid decay of $\eta_0$ and $\mathbf{ \Upsilon}_0$ so that for any $B>0$, one can bring in a factor $\lesssim_B \Bigl(1+\varpi^2\varrho^{-1}|k-\nu_{n+1}|\Bigr)^{-10B}$ and the rest part of the proof is the same.
\medskip

To show  \eqref{eq:L-2-sum}, using $\supp\,\wh{f}_\mu(\xi,s)\subset\{(\xi,s); |\xi-\mu|\ll \varrho^{-1},\;s\in[-10,10]\}$ for all $\mu$, we claim that for any  fixed $C\ge 1$, one has
\begin{equation}
\label{eq:max-plate}
\mathcal{M}^{\varpi,\varrho}f_\mu(\nu,k) \le \varpi^{-O(1)} \mathcal{M}^{\varpi,\varrho}f_\mu(\xxx),
\end{equation}
for all $\xxx\in(\nu,k)+C \mathcal{R}_1^{\varpi,\varrho}$ and all $k\in\Z,\,(\vvv,\mu)\in\mathcal{ L}\times\varGamma$, where the implicit constant in $O(1)$ depends only on $C$ and $n$.

Squaring both sides of \eqref{eq:max-plate} and integrating on 
$(\nu,k)+ C\mathcal{R}_1^{\varpi,\varrho}$ then summing over $\vvv,k$,
we obtain \eqref{eq:L-2-sum} by the $L^2-$boundedness of mutli-parameter maximal functions over all rectangles with sides parallel to axes (c.f. Chapter 2 of Stein \cite{SteinHA}) and then summing over $\mu$, by Plancherel and almost orthogonality in the frequency space. When changing orders in summing over $\vvv,\mu,k$, one needs to take advantagne of the fact that when $k$ is at a distance $\approx 2^{\gamma}\varpi^{-2}\varrho$ away from $\nu_{n+1}$, for some $\gamma\ge 1$, there is a factor $2^{-B\gamma }$ with $B\gg 1$ that ensures the convergence of the geometric series. Thus, on each dyadic level $2^\gamma$,  one may classify  $\nu_{n+1}$  into arithmetic progressions of length $\approx2^{\gamma}$ so that the essential finite overlappedness occurs on each class. The $\O(2^\gamma)-$loss is eaten by $2^{-B\gamma}$ with $B\gg 1$.

It remains to show \eqref{eq:max-plate}, which is deduced  by the same argument of \cite{TaoGFA} based on the uncertainty principle. We leave the proof to Appendix \ref{sec:max}.\medskip

We next prove the Bessel type inequality \eqref{eq:bessel}. For any $\xi'\in\R^n$, define
$$
\mathscr{P}_{\mu,\varrho}^{\,\xi'} f(\xxx)=\iint
e^{2\pi i(x\cdot\xi+x_{n+1}s)}\indic_\Box(\varrho(\xi-\mu-\xi'))
\wh{f}(\xi,s)\;d\xi ds.$$
Then, $f_\mu(\xxx)$ is  the average of $\mathscr{P}_{\mu,\varrho}^{\,\xi'} f(\xxx)$
over $\varrho^{-1}\Box$ with respect to $\xi'$.
By Plancherel's theorem and Minkowski's inequality, we have
\begin{multline}
\label{eq:pf-bes}
\text{the left side of } \eqref{eq:bessel} \\
\le \varrho^n\int_{\varrho^{-1}\Box}\Biggl( \sum_{\varDelta}
\Bigl\|\sum_{(\vvv,\mu)\in\mathcal{ L}\times\varGamma} m^\varDelta_{\vvv,\mu} \mathbf{\Upsilon}_{\vvv}(\cdot)\, \mathscr{P}_{\mu,\varrho}^{\,\xi'}f(\cdot)\Bigr\|_{2}^2
\Biggr)^{\frac{1}{2}}d\xi'.
\end{multline}
For each  $\mu \in\varGamma$, define
$
\mathbb{B}_{\varrho,\mu}=\mu+\frac{1}{\varrho}\Box
$
and let
$$
\mathscr{O}=\bigcup_{\mu\in \varGamma}
\Bigl\{\xi\in\mathbb{B}_{\varrho,\mu};\; \mathsf{dist}(\xi, \R^n\setminus \mathbb{B}_{\varrho,\mu})\ge \varpi^2\varrho^{-1}\Bigr\}.
$$
For any $\xi'\in \varrho^{-1}\Box$, define
$$
\varPi_{\mathscr{O}+\xi'}:\;f(\xxx)\mapsto
\iint e^{2\pi i(x\cdot\xi+x_{n+1}s)}
\indic_{\{(\xi,s)\,;\;\xi\in\mathscr{O}+\xi'\}}(\xi,s)
\wh{f}(\xi,s)\;d\xi ds.
$$
Splitting $f=\bigl(\varPi_{\mathscr{O}+\xi'} f\bigr)+\bigl(\mathrm{id}-\varPi_{\mathscr{O}+\xi'} \bigr)f$ and using the triangle inequality, we have
$$\text{the right side of } \eqref{eq:pf-bes}  \le \mathbf{I}+\mathbf{II},$$where
\begin{align}
\label{eq:main}
\mathbf{I}=\,&\varrho^n\int_{\varrho^{-1}\Box}\Biggl( \sum_{\varDelta}
\Bigl\|\sum_{(\vvv,\mu)\in\mathcal{ L}\times\varGamma} m^\varDelta_{\vvv,\mu} \mathbf{\Upsilon}_{\vvv}(\cdot)\, \mathscr{P}_{\mu,\varrho}^{\,\xi'}\circ \varPi_{\mathscr{O}+\xi'} f(\cdot)\Bigr\|_{2}^2
\Biggr)^{\frac{1}{2}}d\xi',\\
\label{eq:error}
\mathbf{II}=\,&\varrho^n\int_{\varrho^{-1}\Box}\Biggl( \sum_{\varDelta}
\Bigl\|\sum_{(\vvv,\mu)\in\mathcal{ L}\times\varGamma} m^\varDelta_{\vvv,\mu} \mathbf{\Upsilon}_{\vvv}(\cdot)\, \mathscr{P}_{\mu,\varrho}^{\,\xi'}\circ\bigl( \mathrm{id}- \varPi_{\mathscr{O}+\xi'}\bigr) f(\cdot)\Bigr\|_{2}^2
\Biggr)^{\frac{1}{2}}d\xi'.
\end{align}

To deal with $\mathbf{I}$, we use the  Plancherel theorem and the strict orthogonality  from the pairwise $\varpi^{2}\varrho^{-1}$-separateness between the simply connected components of $\mathscr{O}$, which allows a petite amplification in the frequency space caused by  convolution with  $\wh{\mathbf{\Upsilon}}_\vvv$.
Note that the enlargement on the support of the $s-$variable in the frequency space does not affect the disjointness of the cylindrical sets $\{(\xi,s); \xi\in\mathbb{B}_{\varrho,\mu} ,\; \mathsf{dist}(\xi, \R^n\setminus \mathbb{B}_{\varrho,\mu})\ge \varpi^2\varrho^{-1}/100\}$ as $\mu$ ranges in $\varGamma$. We have
\begin{multline*}
\mathbf{I}\le\varrho^n\int_{\varrho^{-1}\Box}\Biggl( \sum_{\mu}
\Bigl\|\sum_{\varDelta,\vvv} m^\varDelta_{\vvv,\mu} \mathbf{\Upsilon}_{\vvv}(\cdot)\, \mathscr{P}_{\mu,\varrho}^{\,\xi'}\circ \varPi_{\mathscr{O}+\xi'} f(\cdot)\Bigr\|_{2}^2
\Biggr)^{\frac{1}{2}}d\xi'\\
\le \varrho^n\int_{\varrho^{-1}\Box}
\Biggl( \sum_{\mu}
\Bigl\|\, \mathscr{P}_{\mu,\varrho}^{\,\xi'}\circ \varPi_{\mathscr{O}+\xi'} f(\cdot)\Bigr\|_{2}^2
\Biggr)^{\frac{1}{2}}d\xi'\\
\le \varrho^n\int_{\varrho^{-1}\Box}\Biggl(\sum_{\mu}
\int\;\Bigl\|\indic_\Box\bigl(\varrho(\cdot-\mu-\xi')\bigr)
\wh{\varPi_{\mathscr{O}+\xi'}f}(\cdot,s)\Bigr\|^2_{L^2(\R^{n}_{\xi})}ds\Biggr)^{\frac{1}{2}}d\xi'
\le \|f\|_{L^2},
\end{multline*}
where we have used $\ell^1\subset\ell^2$ to get the first inequality and $$\sup_\mu\sum_{\varDelta,\vvv} m_{\vvv,\mu}^\varDelta \mathbf{\Upsilon}_\vvv\le\sum_{\vvv }\mathbf{\Upsilon}_\vvv\le 1,$$  for the second estimate. We used the strict orthogonality again in the last step.\smallskip

For $\mathbf{II}$, we have no strict orthogonality in the Fourier side anymore since the frequencies are located at the $\varpi^2\varrho^{-1}-$neighbourhood of the boundary inside $\mathbb{B}_{\varrho,\mu}$. In particular, no disjointness of the frequency variables can be used. Instead, by using the Plancherel theorem and the almost orthogonality followed with Cauchy-Schwarz as in dealing with $\mathbf{I}$ above, we have
\begin{align*}
\mathbf{II}\lesssim_n \varrho^n\int_{\varrho^{-1}\Box}\Biggl( \sum_{\mu}
\Bigl\|\sum_{\varDelta,\vvv} m^\varDelta_{\vvv,\mu} \mathbf{\Upsilon}_{\vvv}(\cdot)\, \mathscr{P}_{\mu,\varrho}^{\,\xi'}\circ \bigl(\mathrm{id}-\varPi_{\mathscr{O}+\xi'}\bigr) f(\cdot)\Bigr\|_{2}^2
\Biggr)^{\frac{1}{2}}d\xi' &\\
\lesssim_n \Biggl(\varrho^n\int_{\varrho^{-1}\Box}
\Bigl\|\bigl(\mathrm{id}-\varPi_{\mathscr{O}+\xi'}\bigr)f\Bigr\|^2_{L^2}d\xi'\Biggr)^{\frac{1}{2}}
\lesssim_n \varpi \|f\|_{L^2},&
\end{align*}
where in the last estimate we have used
the Fubini theorem and that
$$
\sup_{\xi\in\R^n}\,
\varrho^n\int_{\varrho^{-1}\Box}\Bigl(1-\indic_{\mathscr{O}+\xi'}(\xi)\Bigr)\;d\xi'\le\,C_n\,\varpi^2\,,
$$
which is
an obvious fact by noting that  $\xi'$  in the above integrand is restricted inside the intersection of a cube of size $\varrho^{-1}$ and an $\O(\varpi^2\varrho^{-1})-$neighbordhood of  $\cup_\mu\partial\,\mathbb{B}_{\varrho,\mu}$, union of the boundaries of the $\mathbb{B}_{\varrho,\mu}$'s. The proof is complete.
\end{proof}

\subsection{Construction of  the wave tables}

We set off the construction for the $S^\lambda-$version of  the wave table theory akin to  \cite{TaoMZ}.
The advantage of using wave tables is to eradicate the logarithmic loss arising from the repeatedly used dyadic pigeonhole principle \cite{TaoGFA,Wolff}, which blocked the approach to the endpoint results.

 From this section on, we start  adopting new notations $F^\lambda$ and $G^\lambda$  to denote respectively the red and blue waves. We apply  Lemma \ref{lem:wp-d} with $\varrho=R^{1/2}$
$$
F^\lambda(\xxx,t)=\sum_{(\vvv,\mu)\in\mathcal{ L}\times\varGamma_1} F_{\vvv,\mu}^\lambda(\xxx,t),\quad
G^\lambda(\xxx,t)=\sum_{(\vvv,\mu)\in\mathcal{ L}\times\varGamma_2} G_{\vvv,\mu}^\lambda(\xxx,t),
$$
where  for each $j\in\{1,2\}$, $\varGamma_j:=\varGamma\cap \widetilde{V}_j$
and  $t $ is always assumed to be contained in an interval of length $\O(\lambda R)$. Moreover, for each $(\vvv,\mu)\in \mathcal{ L}\times \varGamma_1$, the wave packet $F_{\vvv,\mu}^\lambda=\sum_k F^\lambda_{\vvv,\mu,k}$ (and similarly for $G_{\vvv,\mu}^\lambda$) is tightly concentrated  on a tube $T^\lambda_{\vvv,\mu}$, which is the union of the plates $\mathfrak{ V}^\lambda_{\nu,\mu,k}$, on which $F^\lambda_{\vvv,\mu,k}$ is concentrated in the sense of \eqref{eq:decay-wp-0}, for $k$ satisfying $|k-\nu_{n+1}|\lesssim \varpi^{-2}\varrho$. 
For each of those $k$'s with the property that $|k-\nu_{n+1}|\gg \varpi^{-2}\varrho$, $F^\lambda_{\vvv,\mu,k}$ decreases very fast in the wave envelope $F^\lambda_{\vvv,\mu}=\sum_k F^\lambda_{\vvv,\mu,k}$ in view of the $\ell^2-$summability \eqref{eq:L-2-sum} and \eqref{eq:decay-wp-0}.

Note that each $\mathfrak{ V}^\lambda_{\vvv,\mu,k}$ is oriented 
in the direction of $(\frac{\mu}{\lambda},-\frac{|\mu|^2}{2\lambda^2},1)$, parallel to $T^\lambda_{\vvv,\mu}$  for the fixed $\mu$. We shall say $T^\lambda_{\vvv,\mu}$ is \emph{parametrized by} $(\vvv,\mu)$. Moreover, each tube $T^\lambda_{\vvv,\mu}$ is of dimensions $\underbrace{\sqrt{R}\times\cdots\times\sqrt{R}}_{(n+1)\,\text{times}}\times \lambda R$.

Denoting $F^\lambda_T=F^\lambda_{\vvv,\mu}$ with $T=T^\lambda_{\vvv,\mu}$, we  rewrite the  decomposition for $F^\lambda$ into the form  $F^\lambda=\sum_{T_1\in\mathbf{T}_1}F^\lambda_{T_1}$,
where  $\mathbf{T}_1$ is the    the collection of $T_1$ tubes   associated to the red waves in the above sense.
For each $T_1$,  we write $$F^\lambda_{T_1}=\sum_{\mathfrak{ V}^\lambda_{\vvv,\mu,k}\subset T_1}F^\lambda_{\vvv,\mu,k}+\sum_{\mathfrak{ V}^\lambda_{\vvv,\mu,k}\not\subset T_1}F^\lambda_{\vvv,\mu,k}:=F^{\lambda, g}_{T_1}+F^{\lambda,b}_{T_1},$$ 
where $F^{\lambda,b}_{T_1}$ is the (global) part corresponding to the Schwartz tails. Similarly, we have the decomposition for the blue wave $G^\lambda=\sum_{T_2\in\mathbf{T}_2}G^\lambda_{T_2}$
with the local/global decomposition $G^\lambda_{T_2}=G_{T_2}^{\lambda,g}+G_{T_2}^{\lambda,b}$ for each $T_2\in\mathbf{T}_2$. See also Section 3 of \cite{Wolff} for the same decomposition. \medskip

For each $T_j\in\mathbf{T}_j$ with $j\in\{1,2\}$, we  use $\psi_{T_j}$ to denote the bump function
$$\psi_{T_j}(\xxx,t)=\min\bigl\{1, \mathsf{dist}((\xxx,t),T_j)^{-N}\bigr\}$$ adapted to $T_j$.\smallskip

When we say a spacetime cube, we mean a  cube with sides parallel to the axes. A $\lambda-$\emph{stretched}
cube of size $R$ is a cube $Q^\lambda_R\subset\R^{n+2}_{\xxx,t}$ such that the length  in the vertical direction equals to $\lambda R$, \emph{i.e.} along the $t-$axis, and having sides $R$ in the horizontal components $\R^{n+1}_{\xxx}$.
We denote  $\mathcal{Q}_{C_0}(Q_R^\lambda)$ to be the cubes obtained by bisecting each  side of $Q_R^\lambda$
consecutively such that  every $\varDelta\in \mathcal{Q}_{C_0}(Q_R^\lambda)$ is a $\lambda-$stretched
cube of size $2^{-C_0}R$.
\medskip

Fix $\varDelta \in \mathcal{Q}_{C_0}(Q_R^\lambda)$, let
$\mathsf{K}_{Q^\lambda_R}(\varDelta)=\bigl\{\qq\subset \varDelta;\,\qq\in\mathcal{Q}_{J}(Q^\lambda_R)\bigr\}$ with $J\approx \log R$. Each $\qq$ is a $\lambda-$stretched cube of size $\approx\sqrt{R}$. Let $\chi\in \mathcal{S}(\R^{n+2})$ be such that $\wh{\chi}$ is compactly supported in a small neighbourhood of the origin and $\chi\ge 1$ on double of the unit ball. Let $\mathcal{A}_\qq$ be the affine transform sending the John ellipsoid inside  $\qq$ to the unit ball such that  if we let  $\chi_\qq=\chi\circ \mathcal{A}_\qq$, then we have $\chi_\qq\ge \indic_\qq$, where $\indic_\qq$ is the characteristic function of $\qq$\,.
\begin{definition}
	Let $Q=Q^\lambda_R$. For each $\varDelta\in\mathcal{Q}_{C_0}(Q)$ and $T_1\in \mathbf{T}_1$, define
	$$
	m_{T_1}^{G^\lambda,\,\varDelta}=\sum_{\qq\in\mathsf{K}_{Q}(\varDelta)}\sum_{T_2\in\mathbf{T}_2}\bigl\|\chi_\qq\, \psi_{T_1}\,\psi_{T_2}^{-50}\,G_{T_2}^\lambda\bigr\|^2_{L^2(\R^{n+2})},
	$$
	and set $m_{T_1}^{G^\lambda}=\sum_{\varDelta\in\mathcal{Q}_{C_0}(Q)}m_{T_1}^{G^\lambda,\,\varDelta}$. The $(\lambda,\varpi,R^{1/2})-$\emph{wave table} $\mathcal{F}^\lambda$ for $F^\lambda$
	with respect to $G^\lambda$	over $Q$ is defined as the vector-valued function
	$$
	\mathcal{F}^\lambda=\mathcal{F}^\lambda_{\varpi,R^{1/2}}
	=\bigl(\mathcal{F}^{\lambda,\,\varDelta}_\varpi\bigr)_{\varDelta\in\mathcal{Q}_{C_0}(Q)},
	$$
	with
	$$
	\mathcal{F}_\varpi^{\lambda,\varDelta}(\xxx,t):=\sum_{T_1\in\mathbf{T}_1}
	\frac{m_{T_1}^{G^\lambda,\,\varDelta}}{m_{T_1}^{G^\lambda}}F_{T_1}^\lambda(\xxx,t).
	$$
	Similarly, we define the $(\lambda,\varpi,R^{1/2})-$\emph{wave table} $ \mathcal{G}^\lambda$ for $G^\lambda$
	with respect to $F^\lambda$	over $Q$ in the symmetric way:
	$$
	\mathcal{G}^\lambda=\mathcal{G}^\lambda_{\varpi,R^{1/2}}
	=\bigl(\mathcal{G}^{\lambda,\,\varDelta}_\varpi\bigr)_{\varDelta\in\mathcal{Q}_{C_0}(Q)},
	$$
	with
	$$
	\mathcal{G}_\varpi^{\lambda,\varDelta}(\xxx,t):=\sum_{T_2\in\mathbf{T}_2}
	\frac{m_{T_2}^{F^\lambda,\,\varDelta}}{m_{T_2}^{F^\lambda}}G_{T_2}^\lambda(\xxx,t),
	$$
	where
	$$
	m_{T_2}^{F^\lambda,\,\varDelta}=\sum_{\qq\in\mathsf{K}_{Q}(\varDelta)}\sum_{T_1\in\mathbf{T}_1}\bigl\|\chi_\qq\, \psi_{T_2}\,\psi_{T_1}^{-50}\,F_{T_1}^\lambda\bigr\|^2_{L^2(\R^{n+2})},
	$$
	and  $m_{T_2}^{F^\lambda}=\sum_{\varDelta\in\mathcal{Q}_{C_0}(Q)}m_{T_2}^{F^\lambda,\,\varDelta}$.
\end{definition}

Clearly, we have
$$
F^\lambda=\sum_{\varDelta\in\mathcal{Q}_{C_0}(Q)} \mathcal{F}^{\lambda,\,\varDelta}_\varpi\,,\quad
G^\lambda=\sum_{\varDelta\in\mathcal{Q}_{C_0}(Q)} \mathcal{G}^{\lambda,\,\varDelta}_\varpi\,.
$$
\begin{remark}
	Note that the weights $m^{G^\lambda,\varDelta}_{T_1}$ and $m^{F^\lambda,\varDelta}_{T_2}$  appear  different from that of \cite{TaoMZ} but  closer to the form of  those in \cite{Lee21}. However, these two forms are essentially equivalent and the definition we adopt here is more convenient when dealing with the paraboloid. We have also refined this definition by inserting $\psi_{T_j}^{-50}$ to the $L^2-$integral for a technical reason.
\end{remark}
\begin{remark}
	To lighten notations, we will suppress the subscript $\varpi$ and omit the $G^\lambda, F^\lambda$ on the shoulders of $m^{G^\lambda,\varDelta}_{T_1}$ and  $m^{F^\lambda,\varDelta}_{T_2}$ respectively. The dependence on various of these parameters will be clear from the context.
\end{remark}

Note that by the linearity of the operator $S^\lambda(t)$, for any $\varDelta$, we find that $\mathcal{F}^{\lambda,\varDelta}$ and $\mathcal{G}^{\lambda,\varDelta}$ are red and blue waves respectively, and one may define the energy  $\EEE(\mathcal{F}^{\lambda,\varDelta})$ and $\EEE(\mathcal{G}^{\lambda,\varDelta})$ as in the beginning of  Section \ref{sec:pre+top}.
By using the Bessel type inequality \eqref{eq:bessel}, we have
\begin{lemma}
	\label{lem:bessel-w-t}
	There is a constant $C_n$ depending only on $n$ such that we have
	\begin{align}
  \EEE(\mathcal{F}^\lambda)^{1/2}:=	\Bigl(\sum_{\varDelta\in\mathcal{Q}_{C_0}(Q)} \EEE\bigl(\mathcal{F}^{\lambda,\varDelta}\bigr)\Bigr)^\frac{1}{2}\le& \;(1+C_n\,\varpi) \EEE(F^\lambda)^{1/2}\;,\\
	\EEE(\mathcal{G}^\lambda)^{1/2}:=	\Bigl(\sum_{\varDelta\in\mathcal{Q}_{C_0}(Q)} \EEE\bigl(\mathcal{G}^{\lambda,\varDelta}\bigr)\Bigr)^\frac{1}{2}\le& \;(1+C_n\,\varpi) \EEE(G^\lambda)^{1/2}\;,
	\end{align}
	for any $(\lambda,\varpi,R^{1/2})$ wave tables $\mathcal{F}^\lambda,\mathcal{G}^\lambda$ over a spacetime cube $Q$.
\end{lemma}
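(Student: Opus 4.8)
The plan is to recognize each slice $\mathcal{F}^{\lambda,\varDelta}_\varpi$ of the wave table as a reweighting of the wave packet expansion of $F^\lambda$ by a nonnegative array that sums to one in $\varDelta$, so that the assertion becomes a direct application of the Bessel type inequality \eqref{eq:bessel} of Lemma \ref{lem:wp-d}. First I would write $F^\lambda=S^\lambda(t)f$ (so that $E(F^\lambda)=\|f\|_{L^2}^2$) and invoke Lemma \ref{lem:wp-d} with $\varrho=R^{1/2}$, giving $F^\lambda=\sum_{(\vvv,\mu)\in\mathcal{L}\times\varGamma_1}c_{\vvv,\mu}\phi_{\vvv,\mu}$ and, after grouping plates into tubes, $F^\lambda_{T_1}=\sum_{\mathfrak{V}^\lambda_{\vvv,\mu}\subset T_1}c_{\vvv,\mu}\phi_{\vvv,\mu}$. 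Letting $T_1(\vvv,\mu)$ denote the unique tube of $\mathbf{T}_1$ with $\mathfrak{V}^\lambda_{\vvv,\mu}\subset T_1(\vvv,\mu)$, I would set
$$
m^\varDelta_{\vvv,\mu}:=\frac{m^{\varDelta}_{T_1(\vvv,\mu)}}{m_{T_1(\vvv,\mu)}}\quad\text{for }\mu\in\varGamma_1,\qquad m^\varDelta_{\vvv,\mu}:=0\quad\text{for }\mu\in\varGamma\setminus\varGamma_1,
$$
so that, unwinding the definition of the wave table, $\mathcal{F}^{\lambda,\varDelta}_\varpi(\xxx,t)=\sum_{(\vvv,\mu)\in\mathcal{L}\times\varGamma}m^\varDelta_{\vvv,\mu}c_{\vvv,\mu}\phi_{\vvv,\mu}(\xxx,t)$.

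Next I would verify that $(m^\varDelta_{\vvv,\mu})$ is admissible for \eqref{eq:bessel}: each $m^{\varDelta}_{T_1}$ is a sum of squared $L^2$-norms, hence $\ge 0$, and $\sum_{\varDelta\in\mathcal{Q}_{C_0}(Q)}m^{\varDelta}_{T_1}=m_{T_1}$ by definition, so $\sum_{\varDelta}m^\varDelta_{\vvv,\mu}=1$ whenever $\mu\in\varGamma_1$ (adopting the harmless convention of spreading $F^\lambda_{T_1}$ uniformly over $\mathcal{Q}_{C_0}(Q)$ on the degenerate event $m_{T_1}=0$) and $\le 1$ in general. Thus \eqref{eq:bessel} applies to this array and produces
$$
\Biggl(\sum_{\varDelta\in\mathcal{Q}_{C_0}(Q)}\sup_{t}\bigl\|\mathcal{F}^{\lambda,\varDelta}_\varpi(\cdot,t)\bigr\|_{L^2(\R^{n+1})}^2\Biggr)^{1/2}\le(1+C_n\varpi)\,\|f\|_{L^2}=(1+C_n\varpi)\,E(F^\lambda)^{1/2}.
$$

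It then remains to identify the left-hand side with $E(\mathcal{F}^\lambda)^{1/2}$. Since $S^\lambda(t)$ is linear, $\mathcal{F}^{\lambda,\varDelta}_\varpi=S^\lambda(t)g_\varDelta$ with $g_\varDelta=\sum_{(\vvv,\mu)}m^\varDelta_{\vvv,\mu}f_{\vvv,\mu}$, so $E(\mathcal{F}^{\lambda,\varDelta})=\|g_\varDelta\|_{L^2}^2$ by definition of the energy; moreover, on the frequency support of $g_\varDelta$ the operator $S^\lambda(t)$ acts as a unimodular Fourier multiplier (a modulation), so Plancherel gives $\|\mathcal{F}^{\lambda,\varDelta}_\varpi(\cdot,t)\|_{L^2(\R^{n+1})}^2=\|g_\varDelta\|_{L^2}^2$ for every $t$, whence $\sup_t\|\mathcal{F}^{\lambda,\varDelta}_\varpi(\cdot,t)\|_{L^2}^2=E(\mathcal{F}^{\lambda,\varDelta})$. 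Summing over $\varDelta$ and combining with the displayed bound yields the first inequality, and the second follows by the identical argument with the roles of $F^\lambda$ and $G^\lambda$ interchanged. I do not foresee a genuine obstacle here: the only points requiring care are the combinatorial bookkeeping that recasts the tube weights $m^{\varDelta}_{T_1}/m_{T_1}$ as an admissible weight array for \eqref{eq:bessel}, and the elementary fact—immediate from the definitions in Section \ref{sec:pre+top}—that a red or blue wave has spatial $L^2$-norm constant in time and equal to the square root of its energy.
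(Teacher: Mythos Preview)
Your proposal is correct and is exactly the argument the paper intends: the paper simply states that Lemma~\ref{lem:bessel-w-t} follows ``by using the Bessel type inequality \eqref{eq:bessel}'', and you have spelled out precisely that reduction---identifying the tube weights $m^{\varDelta}_{T_1}/m_{T_1}$ with an admissible array $(m^\varDelta_{\vvv,\mu})$ for \eqref{eq:bessel} and observing that $\sup_t\|\mathcal{F}^{\lambda,\varDelta}(\cdot,t)\|_{L^2}^2=E(\mathcal{F}^{\lambda,\varDelta})$ since $S^\lambda(t)$ is a unimodular Fourier multiplier on the relevant frequency support. There is no difference in approach.
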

Next, we define the $C_0-$\emph{quilts} of $\mathcal{F}^\lambda$ and $\mathcal{G}^\lambda$ on $Q=Q^\lambda_R$ as
$$
\bigl[\mathcal{F}^\lambda\bigr]_{C_0}=\sum_{\varDelta\in\mathcal{Q}_{C_0}(Q)}\indic_\varDelta\, \mathcal{F}^{\lambda,\,\varDelta}\,,\quad
\bigl[\mathcal{G}^\lambda\bigr]_{C_0}=\sum_{\varDelta\in\mathcal{Q}_{C_0}(Q)}\indic_\varDelta\, \mathcal{G}^{\lambda,\,\varDelta}\,.
$$
The \emph{$(\varpi,C_0)-$interior} of $Q$ is defined as
$$
\mathfrak{I}^{\varpi,\,C_0}(Q)=\bigcup_{\varDelta\in\mathcal{Q}_{C_0}(Q)}(1-\varpi)\varDelta.
$$
Here $(1-\varpi)\varDelta$ is the stretched cube of the same center with $\varDelta$, but with side length multiplied by the constant $(1-\varpi)$ with $0<\varpi\ll 1$.
\smallskip

These cubes play a crucial role to obtain the effective approximation to the product of red and blue waves via the $C_0-$quilts.\smallskip

Let $z_0=(\xxx_0,t_0)\in \R^{n+2}$ and define
the  conic set
$$\mathcal{C}^\lambda(z_0,r)=\mathbf{ \Lambda}_1^\lambda(z_0,r)\cup \mathbf{ \Lambda}_2^\lambda(z_0,r),$$
with $\mathbf{ \Lambda}_j^\lambda(z_0,r)$  given in Lemma \ref{lem:opposite}  for $j=1,2$.
Let $X_{z_0}^{\varpi,r}(Q)=\mathfrak{I}^{\varpi,C_0}(Q)\cap \,\mathcal{C}^\lambda(z_0,r)$.
\smallskip

For any $u\in L^\infty_{loc}(\R^{n+1}_{\xxx}\times\R_t)$ and any measurable subset $\Omega\subset\R^{n+2}$, such that $\Omega=\bigcup_{t\in \mathcal{I}}\bigl(\varPi_{t}\times \{t\}\bigr)$ for some $\mathcal{I}\subset\R$, we denote
$$
\|u\|_{Z(\Omega)}:=\Bigl(\int_{\mathcal{I}}\Bigl(\int_{\varPi_{t}}|u(\xxx,t)|^{s}d\xxx\Bigr)^\frac{q}{s}dt\Bigr)^{\frac{1}{q}},
$$
with  $(q,s)=\bigl(q_c^+,r_c^-\bigr)\in\mathbf{\Gamma}$,
where for any $\gamma\in\R$, we denote $\gamma^+$ ( resp. $\gamma^-$) as a real number greater (resp. smaller) than  but  sufficiently close to $\gamma$. Thus, the sense of $Z\bigl(\mathfrak{I}^{\varpi,C_0}(Q)\bigr)$
and $Z\bigl(X_{z_0}^{\varpi,r}(Q)\bigr)$ is clearly understood.
\medskip

The effective approximation of $\|F^\lambda G^\lambda\|_{Z(Q)}$ via $C_0-$quilts below plays a fundamental role in the endpoint theory of bilinear estimates \cite{TaoMZ}.

\begin{prop}
	\label{pp:C-0 quilt}
	For any $R\in [2^{10 C_0},\lambda]$ , $\varpi\in (0, 2^{-C_0}]$ and $Q=Q^\lambda_R$, there exists a  constant $C$, depending only on $n$ and independent of $C_0$,
	such that if $F^\lambda$ and $G^\lambda$ are red and blue waves with $\EEE(F^\lambda)=\EEE(G^\lambda)=1$, and $\mathcal{F}^\lambda,\mathcal{G}^\lambda$ are the $(\lambda,\varpi,R^{1/2})-$wave tables for  $F^\lambda$ and $G^\lambda$  over $Q^*:=CQ$ respectively,   we have
	\begin{equation}
	\label{eq:app-1}
	\|F^\lambda G^\lambda\|_{Z(Q)}\le (1+C\varpi)\bigl\|\bigl[\mathcal{F}^\lambda\bigr]_{C_0}\bigl[\mathcal{G}^\lambda\bigr]_{C_0}\bigr\|_{Z(\mathfrak{I}^{\varpi,C_0}(Q^*))}+\lambda^{\frac{1}{q}}\varpi^{-O(1)}
	\end{equation}
	and there exists $\kappa=\kappa_Z>0$ such that
		\begin{multline}
	\label{eq:app-2}
	\|F^\lambda G^\lambda\|_{Z(Q\cap\, \mathcal{C}^\lambda(z_0,r))}\le\, (1+C\varpi)\,\bigl\|\bigl[\mathcal{F}^\lambda\bigr]_{C_0}\bigl[\mathcal{G}^\lambda\bigr]_{C_0}\bigr\|_{Z(X_{z_0}^{\varpi,r}(Q^*))}\\
	+\lambda^{\frac{1}{q}}\;\varpi^{-O(1)}\;\Bigl(1+\frac{R}{r}\Bigr)^{-\kappa},
	\end{multline}
	holds for all $z_0\in\R^{n+2}$.
\end{prop}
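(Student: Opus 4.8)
The plan is to transplant the wave--table approximation of \cite[\S15]{TaoMZ} to the $S^\lambda$--setting, replacing the repeated pigeonholing of \cite{TaoGFA,Wolff} by the $(1+C_n\varpi)$--almost--orthogonality \eqref{eq:bessel}, which is precisely what removes the logarithmic loss. Write $F^\lambda=\sum_{\varDelta\in\mathcal{Q}_{C_0}(Q^*)}\mathcal{F}^{\lambda,\varDelta}$ and $G^\lambda=\sum_{\varDelta}\mathcal{G}^{\lambda,\varDelta}$; on each interior piece $(1-\varpi)\varDelta$ one has $[\mathcal{F}^\lambda]_{C_0}[\mathcal{G}^\lambda]_{C_0}=\mathcal{F}^{\lambda,\varDelta}\mathcal{G}^{\lambda,\varDelta}$, so there
\[
F^\lambda G^\lambda-[\mathcal{F}^\lambda]_{C_0}[\mathcal{G}^\lambda]_{C_0}
=\bigl(F^\lambda-\mathcal{F}^{\lambda,\varDelta}\bigr)\,G^\lambda+\mathcal{F}^{\lambda,\varDelta}\,\bigl(G^\lambda-\mathcal{G}^{\lambda,\varDelta}\bigr).
\]
Since $(q,s)=(q_c^+,r_c^-)$ stays strictly off the critical index, $\|\cdot\|_Z=\|\cdot\|_{L^q_tL^s_x}$ is a genuine norm with $q<\infty$ and $s<2$; the enlargement $Q^*=CQ$ (with $C$ a dimensional constant) ensures that every wave packet meeting $Q$ is supported over $Q^*$, and the triangle inequality then reduces \eqref{eq:app-1} to: (a) bounding $\|F^\lambda G^\lambda\|_{Z(Q\setminus\mathfrak{I}^{\varpi,C_0}(Q^*))}$ on the $\O(\varpi)$--relative--width slivers of the $\varDelta$--grid, absorbed into the error (or into the $(1+C\varpi)$ factor) via the rapid off--plate decay of the wave packets exactly as in \cite{TaoMZ}; and (b) a $\varDelta$--summable bound for each of the two cross terms on $(1-\varpi)\varDelta$, of which by symmetry I treat only $(F^\lambda-\mathcal{F}^{\lambda,\varDelta})\,G^\lambda$.

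For (b), write $F^\lambda-\mathcal{F}^{\lambda,\varDelta}=\sum_{T_1}w^\varDelta_{T_1}F^\lambda_{T_1}$ with $w^\varDelta_{T_1}:=1-m^\varDelta_{T_1}/m_{T_1}\in[0,1]$, and $G^\lambda=\sum_{T_2}G^\lambda_{T_2}$. I first descend from the $Z$--norm on $(1-\varpi)\varDelta$ to an $\ell^2$--sum of $L^2$--norms over the $\sqrt R$--cubes $\qq\in\mathsf{K}_{Q^*}(\varDelta)$: on each $\qq$ the product $F^\lambda_{T_1}G^\lambda_{T_2}$ has Fourier support of diameter $\O(R^{-1/2})$, so Bernstein promotes $L^s_x(\qq)$ to $L^2_x(\qq)$ at the cost of a power of $R$, and the $L^q_t$--norm over the $\O(\lambda\sqrt R)$ time--subintervals of length $\sqrt R$ inside $\varDelta$ contributes further powers of $R$ and $\lambda$; by the relation $\tfrac1q=\tfrac{n+1}2(1-\tfrac1s)$ defining $\mathbf{\Gamma}$ all of these powers collapse into the single factor $\lambda^{1/q}$. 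On the $L^2$--side the frequencies of the $F^\lambda_{T_1}G^\lambda_{T_2}$ are pairwise $\gtrsim R^{-1/2}$--separated (transversality of the two caps of $\varSigma$, i.e. the non--vanishing Gaussian curvature used in Lemma \ref{lem:opposite}), so \eqref{eq:bessel} applied with multipliers built from $w^\varDelta_{T_1}$, together with $|F^\lambda_{T_1}|\lesssim c_{T_1}\psi_{T_1}$ and $\psi_{T_2}^{-50}\ge 1$, yields the tamed bilinear $L^2$--Kakeya bound
\begin{multline*}
\sum_{\qq\in\mathsf{K}_{Q^*}(\varDelta)}\Bigl\|\sum_{T_1}w^\varDelta_{T_1}F^\lambda_{T_1}\,G^\lambda\Bigr\|_{L^2(\qq)}^2
\ \le\ (1+C\varpi)\sum_{T_1}(w^\varDelta_{T_1})^2\,c_{T_1}^2\sum_{\qq}\sum_{T_2}\bigl\|\chi_\qq\,\psi_{T_1}\,\psi_{T_2}^{-50}\,G^\lambda_{T_2}\bigr\|_{L^2}^2\\
=\ (1+C\varpi)\sum_{T_1}(w^\varDelta_{T_1})^2\,c_{T_1}^2\,m^\varDelta_{T_1},
\end{multline*}
the last line being the definition of $m^\varDelta_{T_1}$ (this is exactly where the refinement $\psi^{-50}_{T_j}$ in the wave table is convenient, keeping the tail estimates lossless).

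It remains to sum over $\varDelta$. Using the elementary inequality $\sum_\varDelta(w^\varDelta_{T_1})^2m^\varDelta_{T_1}\le m_{T_1}\bigl(1-\sum_\varDelta(m^\varDelta_{T_1}/m_{T_1})^2\bigr)$ (from $p^3\le p^2$ on $[0,1]$), the energy bound $\sum_{T_1}c_{T_1}^2\lesssim\varpi^{-O(1)}$ coming from \eqref{eq:L-2-sum}, and — exactly as in \cite{TaoMZ,Lee21} — the splitting of the $\varDelta$--sum according to whether the $G^\lambda$--mass near a given tube $T_1$ concentrates in one $\varDelta$ or is spread over many, with the spread--out part reabsorbed into $\|[\mathcal{F}^\lambda]_{C_0}[\mathcal{G}^\lambda]_{C_0}\|_{Z(\mathfrak{I}^{\varpi,C_0}(Q^*))}$, one assembles (together with the geometric normalization from the first reduction step) a bound of the shape $C\varpi\,\bigl\|[\mathcal{F}^\lambda]_{C_0}[\mathcal{G}^\lambda]_{C_0}\bigr\|_{Z(\mathfrak{I}^{\varpi,C_0}(Q^*))}+\lambda^{1/q}\varpi^{-O(1)}$; combined with the symmetric estimate and (a) this gives \eqref{eq:app-1}. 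For \eqref{eq:app-2} one reruns the whole scheme with every region intersected with $\mathcal{C}^\lambda(z_0,r)=\mathbf{\Lambda}_1^\lambda(z_0,r)\cup\mathbf{\Lambda}_2^\lambda(z_0,r)$; the new point is that a red tube $T_1$ is, by the Kakeya--compression property, parallel to within $\O(1)$ to the generators of $\mathbf{\Lambda}_1^\lambda$, so only red packets transverse to $\mathbf{\Lambda}_2^\lambda$ carry appreciable mass on $\mathbf{\Lambda}_2^\lambda(z_0,r)$, and \eqref{eq:opp} gives $\|F^\lambda\|_{L^2(\mathbf{\Lambda}_2^\lambda(z_0,r))}\lesssim(\lambda r)^{1/2}$, symmetrically $\|G^\lambda\|_{L^2(\mathbf{\Lambda}_1^\lambda(z_0,r))}\lesssim(\lambda r)^{1/2}$. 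Replacing $\sum_{T_1}c_{T_1}^2\lesssim\varpi^{-O(1)}$ in the above by the energy of $F^\lambda$ (resp. $G^\lambda$) restricted to the opposite--colour conic neighbourhood — which is $\O(\lambda r)$ rather than $\O(\lambda R)$ — improves the $L^2$--gain by a factor $\O(r/R)$, and $Z$--interpolating this against the trivial bound converts the additive error into $\lambda^{1/q}\varpi^{-O(1)}(1+R/r)^{-\kappa}$ for some $\kappa=\kappa_Z>0$, which is \eqref{eq:app-2}.

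The principal obstacle is the tamed bilinear $L^2$--Kakeya step with the constant $(1+C\varpi)$ and \emph{no} logarithmic factor: it forces one to use the full strength of the $(\lambda,\varpi,R^{1/2})$--wavepacket decomposition — the almost--orthogonality \eqref{eq:bessel}, the plate--maximal bound \eqref{eq:L-2-sum}, and the transversality underlying Lemma \ref{lem:opposite} — rather than crude pigeonholing. The remaining difficulties are of bookkeeping type: tracking the $R$-- and $\lambda$--powers through the mixed norm so that the additive error lands exactly on $\lambda^{1/q}$ (not $R^{1/q}$), performing the $\varDelta$--summation so that the coefficient of the quilt term stays at $1+\O(\varpi)$, disposing of the grid slivers $Q\setminus\mathfrak{I}^{\varpi,C_0}(Q^*)$ via off--plate decay, and, for \eqref{eq:app-2}, making the simultaneous use of transversality and Kakeya compression uniform in $z_0$ by way of the geometry in the proof of Lemma \ref{lem:opposite}.
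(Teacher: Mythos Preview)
Your proposal has the right overall architecture---split off the quilt, bound cross terms, and for \eqref{eq:app-2} exploit the opposite--colour energy estimate---but the core step is misconceived in two linked ways.

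First, the $(1+C\varpi)$ multiplying the quilt term does \emph{not} come from the Kakeya analysis. It comes from an averaging argument that passes from $Q$ to $\mathfrak{I}^{\varpi,C_0}(Q^*)$: one translates the $C_0$--grid and observes that the union of the slivers $\varDelta\setminus(1-\varpi)\varDelta$ has relative measure $\O(\varpi)$, so some translate sees $\le(1+C\varpi)$ times the $Z$--norm over the interior (this is the duality/averaging argument of \cite{TaoMZ,LeeVargas,Temur}, not ``off--plate decay''). After that step the cross terms are estimated purely as additive errors $\lambda^{1/q}\varpi^{-O(1)}$; there is no mechanism by which a portion of them is ``reabsorbed'' into $\|[\mathcal F^\lambda]_{C_0}[\mathcal G^\lambda]_{C_0}\|_Z$ as you describe. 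Consequently the tamed $L^2$--Kakeya bound only needs the constant $\varpi^{-O(1)}$ (there are $\varpi^{-O(1)}$ pairs $(\varDelta,\varDelta')$), not $(1+C\varpi)$.

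Second, your displayed bilinear $L^2$ inequality fails for the reason you yourself name: the products $F^\lambda_{T_1}G^\lambda_{T_2}$ are \emph{not} pairwise $R^{-1/2}$--separated in frequency. Different quadruples $(T_1,T_2,\bar T_1,\bar T_2)$ with $\mu_{T_1}+\mu_{T_2}=\mu_{\bar T_1}+\mu_{\bar T_2}+\O(R^{-1/2})$ and $|\mu_{T_1}|^2+|\mu_{T_2}|^2=|\mu_{\bar T_1}|^2+|\mu_{\bar T_2}|^2+\O(R^{-1/2})$ interact, and this resonance set $\mathfrak S_{R^{-1/2}}$ is what drives the whole argument. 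The actual proof expands the $L^2$--norm as a quadruple tube sum restricted to $\mathfrak S_{R^{-1/2}}$; uses the weight inequality $\prod m^{\varDelta}/m\le(m^{\varDelta'}_{T_2}/m_{\bar T_2})^{1/2}(m^{\varDelta'}_{\bar T_2}/m_{T_2})^{1/2}$ and Cauchy--Schwarz to decouple; and then---this is the geometric heart you are missing---exploits that $\qq\subset\varDelta''\ne\varDelta'$, so the $T_2$--tubes passing through both $z_\qq$ and $\varDelta'$ form a \emph{bush}, while the curvature constraint $\mu_{T_2}\in\Pi^{R^{-1/2}}_{\mu_{T_1},\mu_{\bar T_2}}$ cuts the bush by a transverse hyperplane, giving the multiplicity bound that produces $\lambda^{1/2}R^{-(n-1)/4}$. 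Your scheme, which works on the \emph{same} $\varDelta$ with coefficients $w^\varDelta_{T_1}=1-m^\varDelta_{T_1}/m_{T_1}$, never sees this bush structure, and the Bessel inequality \eqref{eq:bessel} is an energy statement at fixed time that cannot substitute for it. For \eqref{eq:app-2} the mechanism is again different from what you sketch: one interpolates the \emph{same} $L^2$--Kakeya bound with an $L^q_tL^1_x$ bound in which Lemma~\ref{lem:opposite} replaces one factor of $R$ by $r$, yielding $\kappa=\frac1q(\frac1s-\frac12)$; one does not replace $\sum c_{T_1}^2$ by a restricted energy.
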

 \begin{remark}
 	As pointed out in \cite{TaoMZ}, this result is a pigeonhole-free version of the arguments in Wolff \cite{Wolff}. For this reason, we refer to the method of \cite{TaoMZ} as a profound version of the induction on scale argument.
 \end{remark}
\begin{proof}
	\noindent\emph{Step 1.  Reduction to a tamed bilinear $L^2-$Kakeya type estimate.}
	We omit $z_0$ in $X^{\varpi,r}_{z_0}$ for brevity.
	By the averaging argument in \cite{TaoMZ,LeeVargas,Temur}, there is a  universal constant $C$  such that we have
	$$
		\|F^\lambda G^\lambda\|_{Z(Q)}\le (1+C\varpi)\bigl\|F^\lambda G^\lambda\bigr\|_{Z(\mathfrak{I}^{\varpi,C_0}(Q^*))}
	$$
	$$
	\|F^\lambda G^\lambda\|_{Z(Q\cap\mathcal{C}^\lambda(z_0,r))}\le (1+C\varpi)\bigl\|F^\lambda G^\lambda\bigr\|_{Z(X^{\varpi,r}(Q^*))}.
	$$
	To get these two estimates, we note that the same method in \cite{TaoMZ} is directly applied to the stretched cubes in symmetric norms, and the mixed norms follows from the duality argument \cite{LeeVargas}.\smallskip

	Writing
	\begin{multline*}
	F^\lambda G^\lambda=\bigl[\mathcal{F}^\lambda\bigr]_{C_0}\bigl[\mathcal{G}^\lambda\bigr]_{C_0}
	+\bigl[\mathcal{F}^\lambda\bigr]_{C_0}\bigl(G^\lambda-\bigl[\mathcal{G}^\lambda\bigr]_{C_0}\bigr)+\bigl(F^\lambda-\bigl[\mathcal{F}^\lambda\bigr]_{C_0}\bigr)\,{G}^\lambda\;,
	\end{multline*}
	we are reduced to 
	\begin{equation}
	\label{eq:red-1}
	\bigl\| \bigl[\mathcal{F}^\lambda\bigr]_{C_0}\bigl(G^\lambda-\bigl[\mathcal{G}^\lambda\bigr]_{C_0}\bigr)\bigr\|_{Z(\mathfrak{I}^{\varpi,C_0}(Q^*))}\le\,\varpi^{-O(1)}\lambda^{\frac{1}{q}}\,,
	\end{equation}
	\begin{equation}
	\label{eq:red-2}
	\bigl\| \bigl(F^\lambda-\bigl[\mathcal{F}^\lambda\bigr]_{C_0}\bigr)\,{G}^\lambda\bigr\|_{Z(\mathfrak{I}^{\varpi,C_0}(Q^*))}\le\,\varpi^{-O(1)}\,\lambda^{\frac{1}{q}},
\end{equation}
	\begin{equation}
\label{eq:red-3}
\bigl\| \bigl[\mathcal{F}^\lambda\bigr]_{C_0}\bigl(G^\lambda-\bigl[\mathcal{G}^\lambda\bigr]_{C_0}\bigr)\bigr\|_{Z(X^{\varpi,r}(Q^*))}\le\,\varpi^{-O(1)}\Bigl(1+\frac{R}{r}\Bigr)^{-\kappa}\lambda^{\frac{1}{q}}\,,
\end{equation}
\begin{equation}
\label{eq:red-4}
\bigl\| \bigl(F^\lambda-\bigl[\mathcal{F}^\lambda\bigr]_{C_0}\bigr)\,{G}^\lambda\bigr\|_{Z(X^{\varpi,r}(Q^*))}\le\,\varpi^{-O(1)}\,\Bigl(1+\frac{R}{r}\Bigr)^{-\kappa}\lambda^{\frac{1}{q}}\,.
\end{equation}
By symmetry, we only show \eqref{eq:red-1}
and \eqref{eq:red-3}. By Minkowski inequality, we are reduced to showing that
	\begin{equation}
\label{eq:red-1'}
\max_{\varDelta,\varDelta'\in\mathcal{Q}_{C_0}(Q^*)}\bigl\| \mathcal{F}^{\lambda,\varDelta}\, \mathcal{G}^{\lambda,\varDelta'}\bigr\|_{Z(\mathfrak{I}^{\varpi,C_0}(Q^*)\setminus \{\varDelta'\})}\le\,\varpi^{-O(1)}\lambda^{\frac{1}{q}}\,,
\end{equation}
and
	\begin{equation}
\label{eq:red-3'}
\max_{\varDelta,\varDelta'\in\mathcal{Q}_{C_0}(Q^*)}\
\bigl\| \mathcal{F}^{\lambda,\varDelta}\, \mathcal{G}^{\lambda,\varDelta'}\bigr\|_{Z(X^{\varpi,r}(Q^*)\setminus\{\varDelta'\})}\le\,\varpi^{-O(1)}\Bigl(1+\frac{R}{r}\Bigr)^{-\kappa}\lambda^{\frac{1}{q}}\,.
\end{equation}
\subsection*{Claim} If
\begin{equation}
\label{eq:bil-kakeya}
\max_{\varDelta,\varDelta'\in\mathcal{Q}_{C_0}(Q^*)}
\bigl\|\mathcal{F}^{\lambda,\varDelta}\, \mathcal{G}^{\lambda,\varDelta'}\bigr\|_{L^2_{t,\xxx}(\mathfrak{I}^{\varpi,C_0}(Q^*)\setminus\{\varDelta'\})}\le\,\varpi^{-O(1)} \lambda^{1/2}R^{-\frac{n-1}{4}},
\end{equation}
then
\eqref{eq:red-1'} and \eqref{eq:red-3'} hold.\\

To show this claim, using Cauchy-Schwarz inequalities and Lemma \ref{lem:bessel-w-t}, we get
	\begin{equation}
\label{eq:red-1'-energy}
\max_{\varDelta,\varDelta'\in\mathcal{Q}_{C_0}(Q^*)}\bigl\| \mathcal{F}^{\lambda,\varDelta}\, \mathcal{G}^{\lambda,\varDelta'}\bigr\|_{L^q_tL^1_x\bigl(\mathfrak{I}^{\varpi,C_0}(Q^*)\bigr)}\lesssim\,(\lambda R)^{1/q}\,.
\end{equation}
Similarly, applying Lemma \ref{lem:opposite} to $\mathcal{F}^{\lambda,\varDelta}$, $\mathcal{G}^{\lambda,\varDelta'}$ in place of $F^\lambda_1, F^\lambda_2$ there and Minkowski inequality, we have by interpolation with the energy estimates in Lemma \ref{lem:bessel-w-t}
	\begin{equation}
\label{eq:red-3'-energy}
\max_{\varDelta,\varDelta'\in\mathcal{Q}_{C_0}(Q^*)}\bigl\| \mathcal{F}^{\lambda,\varDelta}\, \mathcal{G}^{\lambda,\varDelta'}\bigr\|_{L^q_tL^1_x\bigl(X^{\varpi,r}(Q^*)\bigr)}\lesssim\,(\lambda R)^{1/q}\Bigl(1+\frac{R}{r}\Bigr)^{-\frac{1}{2q}}\,.
\end{equation}
From \eqref{eq:bil-kakeya} and H\"older inequality,  we have
\begin{equation}
\label{eq:good-bi}
\max_{\varDelta,\varDelta'\in\mathcal{Q}_{C_0}(Q^*)}\bigl\| \mathcal{F}^{\lambda,\varDelta} \mathcal{G}^{\lambda,\varDelta'}\bigr\|_{L^q_tL^2_x\bigl(\mathfrak{I}^{\varpi,C_0}(Q^*)\setminus\{\varDelta'\}\bigr)}\le\,\varpi^{-O(1)}\lambda^{1/q} R^{\frac{1}{q}-\frac{n+1}{4}}\,.
\end{equation}
Interpolating \eqref{eq:good-bi} with \eqref{eq:red-1'-energy} and \eqref{eq:red-3'-energy} respectively, we obtain \eqref{eq:red-1'} and \eqref{eq:red-3'}, where $\kappa=\frac{1}{q}\bigl(\frac{1}{s}-\frac{1}{2}\bigr)>0$ with $1<s<2$. Thus the claim is verified.\medskip

It remains to prove \eqref{eq:bil-kakeya}, to which we refer as a tamed bilinear $L^2$-Kakeya type estimate
	for except $\varpi^{-O(1)}$, no $\log R$ -loss involved compared to \cite{TaoGFA}. See \cite{TaoMZ} for the cone case.
\medskip
	
	\noindent\emph{Step 2. Bilinear $L^2-$reduction for \eqref{eq:bil-kakeya}.} For any $\varDelta,\varDelta'$,  by definition of $\mathfrak{I}^{\varpi,C_0}(Q^*)$
	\begin{multline*}
	\bigl\|\mathcal{F}^{\lambda,\varDelta} \mathcal{G}^{\lambda,\varDelta'}\bigr\|_{L^2_{t,\xxx}(\mathfrak{I}^{\varpi,C_0}(Q^*)\setminus\{\varDelta'\})}^2
	\lesssim
	\sum_{\varDelta''\in\mathcal{Q}_{C_0}(Q^*)\setminus\{\varDelta'\}}\sum_{\qq\in\mathsf{K}_{Q^*}(\varDelta'')}
	\bigl\|\mathcal{F}^{\lambda,\varDelta} \mathcal{G}^{\lambda,\varDelta'}\bigr\|_{L^2(\qq)}^2\\
	\lesssim 2^{O(C_0)} \max_{\varDelta''\in\mathcal{Q}_{C_0}(Q^*)\setminus\{\varDelta'\}}\sum_{\qq\in\mathsf{K}_{Q^*}(\varDelta'')}
	\bigl\|\mathcal{F}^{\lambda,\varDelta} \mathcal{G}^{\lambda,\varDelta'}\bigr\|_{L^2(\qq)}^2\;.
	\end{multline*}
	 Recall
	 $$
	 \mathcal{F}^{\lambda,\varDelta}(\xxx,t)=\sum_{T_1\in\mathbf{T}_1}
	 \frac{m_{T_1}^{\varDelta}}{m_{T_1}}F_{T_1}^\lambda(\xxx,t),\;\;
	  \mathcal{G}^{\lambda,\varDelta'}(\xxx,t)=\sum_{T_2\in\mathbf{T}_2}
	 \frac{m_{T_2}^{\varDelta'}}{m_{T_2}}G_{T_2}^\lambda(\xxx,t).	
	 $$
	 For any $\qq\in\mathsf{K}_{Q^*}(\varDelta'')$ with $\varDelta''\in\mathcal{Q}_{C_0}(Q^*)\setminus\{\varDelta'\}$,	 we have
	 \begin{align*}
	  \bigl\|\mathcal{F}^{\lambda,\varDelta} \mathcal{G}^{\lambda,\varDelta'}\bigr\|_{L^2(\qq)}^2\lesssim&\;
	  \Biggl\|\sum_{\substack{T_1\in\mathbf{T}_1,T_2\in\mathbf{T}_2\\ T_1\cap 100CQ^\lambda_R\neq\emptyset, T_2\cap 100CQ^\lambda_R\neq\emptyset}}\frac{m_{T_1}^{\varDelta}}{m_{T_1}}F_{T_1}^\lambda\frac{m_{T_2}^{\varDelta'}}{m_{T_2}}G_{T_2}^\lambda\Biggr\|_{L^2(\qq)}^2\\
	  &+\Biggl\|\sum_{\substack{T_1\in\mathbf{T}_1,T_2\in\mathbf{T}_2\\ T_1\cap 100CQ^\lambda_R=\emptyset, \text{ or }T_2\cap 100CQ^\lambda_R=\emptyset}}\frac{m_{T_1}^{\varDelta}}{m_{T_1}}F_{T_1}^\lambda\frac{m_{T_2}^{\varDelta'}}{m_{T_2}}G_{T_2}^\lambda\Biggr\|_{L^2(\qq)}^2,
	 \end{align*}
	where the second term is bounded by $\O(R^{-N})$
	using the rapid decay of the wave packets $F_{T_1}^\lambda=F^{\lambda,g}_{T_1}+F^{\lambda,b}_{T_1}, G^{\lambda}_{T_2}=G^{\lambda,g}_{T_2}+ G^{\lambda,b}_{T_2}$ away from $T_1$ and $T_2$ along with the $\ell^2-$summation \eqref{eq:L-2-sum}. Indeed, if $T_1\cap 100 CQ^\lambda_R=\emptyset$, then on any $\qq\subset CQ^\lambda_R$, we have $|F^{\lambda,g}_{T_1}|\lesssim_N R^{-N}$ since each $\mathfrak{ V}^\lambda\subset T_1$ is at a distance $\gtrsim R$ away from $\qq$. The $F^{\lambda,b}_{T_1}$ term is small as well by using the additional decaying factor in \eqref{eq:decay-wp-0} and the above distance condition. The same argument applies to $G^\lambda_{T_2}$.\smallskip
	
	For the first term, squaring it out, we are led to estimating
	\begin{multline*}
	\sum_{\substack{\qq\in\mathsf{K}_{Q^*}(\varDelta'')\\ \varDelta''\in \mathcal{Q}_{C_0}(Q^*)\setminus\{\varDelta'\} }}\;
	\sum_{\substack{(T_1,T_2),(\bar{T}_1,\bar{T}_2)\in\mathbf{T}_1\times\mathbf{T}_2 \\
T_j\cap 100CQ^\lambda_R\neq\emptyset,\,\bar{T}_j\cap 100CQ^\lambda_R\neq\emptyset,\,j=1,2}}
\frac{m_{T_1}^{\varDelta}\;m_{T_2}^{\varDelta'}\;m_{\bar{T}_1}^{\varDelta}\;m_{\bar{T}_2}^{\varDelta'}}{m_{T_1}\;m_{T_2}\;m_{\bar{T}_1}\;m_{\bar{T}_2}}\\
\underbrace{\iint \chi_\qq^8(\xxx,t)\; F_{T_1}^\lambda(\xxx,t)\;
G^\lambda_{T_2}(\xxx,t)\,\;\overline{F^\lambda_{\bar{T}_1}(\xxx,t)}\;\overline{G^\lambda_{\bar{T}_2}(\xxx,t)}\;d\xxx dt}_{:=\mathcal{I}^{\lambda}(\qq,T_1,T_2,\bar{T}_1,\bar{T}_2)}.
	\end{multline*}
	 For each $\qq$ and $T_1,T_2,\bar{T}_1,\bar{T}_2$ in the above summand,  $\mathcal{I}^\lambda(\qq,T_1,T_2,\bar{T}_1,\bar{T}_2)$ equals to
	\begin{equation}
	\label{eq:qud}
	\iint_{\R^{n+2}}\wh{\chi_\qq^2 F^\lambda_{T_1}}*\wh{\chi_\qq^2 G^\lambda_{T_2}}(\xi,s,\tau)\;
	\overline{\wh{\chi_\qq^2 F^\lambda_{\bar{T}_1}}*\wh{\chi_\qq^2 G_{\bar{T}_2}^\lambda}(\xi,s,\tau)}\; d\xi dsd\tau
	\end{equation}
	by using Parseval's identity.

  For any $T$,  belonging either to $\mathbf{T}_1$ or $\mathbf{T}_2$, we let $\mu_T$ be such that
	 the direction of $T$ is given by  $(\frac{\mu_{T}}{\lambda},-\frac{|\mu_T|^2}{2\lambda^2},1)$ and define
	$$
	\mathfrak{J}_{\mu_T,\lambda}=\Bigl\{(\xi,\tau)\,;\;\; \xi=\mu_T+\O(R^{-1/2}),\; \tau=-\frac{|\mu_T|^2}{2\lambda}+\O\bigl({\lambda}^{-1}R^{-1/2}\bigr)\Bigr\}.
	$$
	Then, 	using $\supp(u*v)\subset\supp(u)+\supp(v)$ for any distributions $u$ and $v$, it is easy to see  that by adjusting the implicit constant in the perturbation terms $\O(\cdot\cdot\cdot)$ if necessary, $\wh{\chi_\qq^2 F^\lambda_{T}}(\xi,s,\tau)$
	 vanishes if $(\xi,\tau)\not\in\mathfrak{J}_{\mu_T,\lambda}$ for $T$ being $T_1,\bar{T}_1$. Likewise for $\wh{\chi_\qq^2 G^\lambda_{T}}(\xi,s,\tau)$
	 with $T$ being $T_2,\bar{T}_2$. In order to see this, we note that the support of $\wh{\chi}_\qq$ is contained in a plate of dimensions $R^{-1/2}\times\cdots \times R^{-1/2}\times \lambda^{-1}R^{-1/2}$ and use a similar  expansion of the phase function as in the proof of the wavepacket deomposition once more with $R\le\lambda^2$.\smallskip

	The condition $\mathcal{I}^\lambda(\qq,T_1,T_2,\bar{T}_1,\bar{T}_2)\ne 0$ entails
$$ \bigl(\mathfrak{J}_{\mu_{T_1},\lambda}+\mathfrak{J}_{\mu_{T_2},\lambda}\bigr)\cap\bigl(\mathfrak{J}_{\mu_{\bar{T}_1},\lambda}+\mathfrak{J}_{\mu_{\bar{T}_2},\lambda}\bigr)\neq\emptyset\,.
$$
Hence $\mu_{T_1},\mu_{T_2},\mu_{\bar{T}_1},\mu_{\bar{T}_2}$ must belong to
\begin{multline*}
	\mathfrak{S}_{R^{-1/2}}:=\Bigl\{ (\mu_1,\mu_2,\bar{\mu}_1,\bar{\mu}_2)\in \varGamma_1\times\varGamma_2\times\varGamma_1\times\varGamma_2;\\
	\mu_1+\mu_2=\bar{\mu}_1+\bar{\mu}_2+\O(R^{-1/2}),\;
	|\mu_1|^2+|\mu_2|^2=|\bar{\mu}_1|^2+|\bar{\mu}_2|^2+\O(R^{-1/2})\Bigr\}
\end{multline*}
	Given  $\mu_1$ and $\bar{\mu}_2$, if we let
$\Pi_{\mu_1,\bar{\mu}_2}=\bigl\{\xi\in\R^n;\,\langle \xi-\bar{\mu}_2,\mu_1-\bar{\mu}_2\rangle=0\bigr\}$
and let
$\Pi^{R^{-1/2}}_{\mu_1,\bar{\mu}_2}$ be the $\O(R^{-1/2})-$neighborhood of $\Pi_{\mu_1,\bar{\mu}_2}$, then $\mu_2\in \Pi^{R^{-1/2}}_{\mu_1,\bar{\mu}_2}$ in order that $(\mu_1,\mu_2,\bar{\mu}_1,\bar{\mu}_2)\in \mathfrak{S}_{R^{-1/2}}$ for some $\bar{\mu}_1\in \varGamma_1$, and this is the observation made in \cite{TaoGFA}. Consequently, for fixed $\mu_1,\mu_2,\bar{\mu}_2$, if we denote $\bar{\mu}_1\prec (\mu_1,\mu_2,\bar{\mu}_2)$ as the $\bar{\mu}_1$'s determined by the $\mathfrak{S}_{R^{-1/2}}$ relation, then
$\mathrm{card}\{\bar{\mu}_1\in\varGamma_1:\,\bar{\mu}_1\prec (\mu_1,\mu_2,\bar{\mu}_2)\}=\O(1)$ for all $(\mu_1,\mu_2,\bar{\mu}_2)$.\smallskip
	
Using 
$$ \frac{m_{T_1}^{G^\lambda,\,\varDelta}m_{T_2}^{F^\lambda,\,\varDelta'}m_{\bar{T}_1}^{G^\lambda,\,\varDelta}m_{\bar{T}_2}^{F^\lambda,\,\varDelta'}}{m_{T_1}^{G^\lambda}\,\;m_{T_2}^{F^\lambda}\,\;m_{\bar{T}_1}^{G^\lambda}\,\;m_{\bar{T}_2}^{F^\lambda}} \le \Biggl( \frac{m_{T_2}^{F^\lambda,\,\varDelta'}}{m_{\bar{T}_2}^{F^\lambda}}\Biggr)^{1/2} \Biggl( \frac{m_{\bar{T}_2}^{F^\lambda,\,\varDelta'}}{m_{T_2}^{F^\lambda}}\Biggr)^{1/2}$$
and Cauchy-Schwarz, we have as in  \cite{Lee21} that
$$
 \max_{\varDelta''\in\mathcal{Q}_{C_0}(Q^*)\setminus\{\varDelta'\}}\sum_{\qq\in\mathsf{K}_{Q^*}(\varDelta'')}
 \Biggl\|\sum_{\substack{T_1\in\mathbf{T}_1,T_2\in\mathbf{T}_2\\ T_1\cap CQ^\lambda_R\neq\emptyset, T_2\cap CQ^\lambda_R\neq\emptyset}}\frac{m_{T_1}^{G^\lambda,\,\varDelta}}{m_{T_1}^{G^\lambda}}F_{T_1}^\lambda\frac{m_{T_2}^{F^\lambda,\,\varDelta'}}{m_{T_2}^{F^\lambda}}G_{T_2}^\lambda\Biggr\|_{L^2(\qq)}^2\\
$$	
is bounded by the product of
\begin{align*}
\Bigg(& \max_{\varDelta''\in\mathcal{Q}_{C_0}(Q^*)\setminus\{\varDelta'\}}
\sum_{\qq\in\mathsf{K}_{Q^*}(\varDelta'')}\quad
\sum_{\substack{T_1,\bar{T}_1\in\mathbf{T}_1,T_2,\bar{T}_2\in\mathbf{T}_2,\; (\mu_{T_1},\;\mu_{T_2},\;\mu_{\bar{T}_1},\;\mu_{\bar{T}_2})\in\mathfrak{S}_{R^{-1/2}}	\\T_1,\bar{T}_1\cap CQ^\lambda_R\neq\emptyset, \;T_2,\bar{T}_2\cap CQ^\lambda_R\neq\emptyset}}\\
&\quad\iint\;\; \Biggl|\chi_\qq^4(\xxx,t)\psi_{T_2}(\xxx_\qq,t_\qq)\psi_{\bar{T}_1}(\xxx,t)\frac{(\psi_{T_1}^{-1}F_{T_1}^\lambda)(\xxx,t)G^\lambda_{\bar{T}_2}(\xxx,t)}{\psi_{\bar{T}_2}(\xxx_\qq,t_\qq)}\Biggr|^2 \frac{m_{T_2}^{F^\lambda,\,\varDelta'}}{m_{\bar{T}_2}^{F^\lambda}}\,d\xxx dt\Bigg)^{1/2}
\end{align*}
and
\begin{align*}
\Bigg(& \max_{\varDelta''\in\mathcal{Q}_{C_0}(Q^*)\setminus\{\varDelta'\}}\sum_{\qq\in\mathsf{K}_{Q^*}(\varDelta'')}\quad\sum_{\substack{T_1,\bar{T}_1\in\mathbf{T}_1,T_2,\bar{T}_2\in\mathbf{T}_2,\;
		(\mu_{T_1},\;\mu_{T_2},\;\mu_{\bar{T}_1},\;\mu_{\bar{T}_2})\in\mathfrak{S}_{R^{-1/2}}	\\T_1,\bar{T}_1\cap CQ^\lambda_R\neq\emptyset, \;T_2,\bar{T}_2\cap CQ^\lambda_R\neq\emptyset}}\\
&\quad\iint\;\; \Biggl|\chi_\qq^4(\xxx,t)\psi_{\bar{T}_2}(\xxx_\qq,t_\qq)\psi_{T_1}(\xxx,t)\frac{(\psi_{\bar{T}_1}^{-1}F_{\bar{T}_1}^\lambda)(\xxx,t)G^\lambda_{T_2}(\xxx,t)}{\psi_{T_2}(\xxx_\qq,t_\qq)}\Biggr|^2 \frac{m_{\bar{T}_2}^{F^\lambda,\,\varDelta'}}{m_{T_2}^{F^\lambda}}\,d\xxx dt\Bigg)^{1/2}
\end{align*}
where $(\xxx_\qq,t_\qq)$ is the center of $\qq$ and we have abused notation replacing $100C$ with $C$ which does not affect the proof.

By symmetry, we only estimate the first one, and the second term is handled by using the same method.\medskip

\noindent\emph{Step 3. End of the proof.}
Squaring  the first term at the end of Step 2 and using $\ell^1\subset\ell^2$, we need to estimate
\begin{multline*} \max_{\varDelta''\in\mathcal{Q}_{C_0}(Q^*)\setminus\{\varDelta'\}}\sum_{\qq\in\mathsf{K}_{Q^*}(\varDelta'')}\quad\sum_{T_1\in\mathbf{T}_1,\bar{T}_2\in\mathbf{T}_2,\;T_1\cap CQ^\lambda_R\neq\emptyset, \,\bar{T}_2\cap CQ^\lambda_R\neq\emptyset}\\
	\frac{1}{m_{\bar{T}_2}^{F^\lambda}}\iint_{\R^{n+2}}\, \chi_\qq^8(\xxx,t)\;\Bigl|\frac{(\psi_{T_1}^{-1}F_{T_1}^\lambda)(\xxx,t)\;G^\lambda_{\bar{T}_2}(\xxx,t)}{\psi_{\bar{T}_2}(\xxx_\qq,t_\qq)}\Bigr|^2\\
	\sum_{\substack{T_2\in \mathbf{T}_2,\; \mu_{T_2}\in\Pi_{\mu_{T_1},\mu_{\bar{T}_2}}^{R^{-1/2}}\\ T_2\cap CQ^\lambda_R\neq\emptyset }}m_{T_2}^{F^\lambda,\varDelta'}\psi_{T_2}^2(\xxx_\qq,t_\qq)\;\;\Bigl|\sum_{\substack{\bar{T}_1\in\mathbf{T}_1,\;\mu_{\bar{T}_1}\prec (\mu_{T_1},\mu_{T_2},\mu_{\bar{T}_2})\\
	\bar{T}_1\cap CQ^\lambda_R\neq \emptyset}}\psi_{\bar{T}_1}(\xxx,t)\Bigr|^2d\xxx dt.
\end{multline*}
By using the uniform estimate $\mathrm{card}\{\bar{\mu}_1:\bar{\mu}_1\prec (\mu_1,\mu_2,\bar{\mu}_2)\}=\O(1)$ for all $(\mu_1,\mu_2,\bar{\mu}_2)$ and the concentration property of bump function $\psi_{\bar{T}_1}$, we have the uniform multiplicity estimate
$$
\max_{T_1,T_2,\bar{T}_2}	\Bigl\|\sum_{\substack{\bar{T}_1\in\mathbf{T}_1,\;\mu_{\bar{T}_1}\prec (\mu_{T_1},\mu_{T_2},\mu_{\bar{T}_2})\\
			\bar{T}_1\cap CQ^\lambda_R\neq \emptyset}}\psi_{\bar{T}_1}\Bigr\|_{L^\infty}\lesssim \varpi^{-O(1)}.
$$
Dropping this term in the integral,
we are  reduced  to estimating the product of
\begin{equation}
\label{eq:max} \max_{\substack{\varDelta''\in\mathcal{Q}_{C_0}(Q^*)\setminus\{\varDelta'\}\\ \qq\in\mathsf{K}_{Q^*}(\varDelta'')}}\max_{T_1\in\mathbf{T}_1,\bar{T}_2\in\mathbf{T}_2,}	\sum_{\substack{T_2\in \mathbf{T}_2,\; \mu_{T_2}\in\Pi_{\mu_{T_1},\mu_{\bar{T}_2}}^{R^{-1/2}}\\ T_2\cap CQ^\lambda_R\neq\emptyset }}m_{T_2}^{F^\lambda,\varDelta'}\psi_{T_2}^2(\xxx_\qq,t_\qq)
\end{equation}
	and
\begin{equation}
\label{eq:trans}	\sum_{\substack{\varDelta''\in\mathcal{Q}_{C_0}(Q^*)\setminus\{\varDelta'\}\\ \qq\in\mathsf{K}_{Q^*}(\varDelta'')}}\sum_{\substack{T_1\in\mathbf{T}_1,\bar{T}_2\in\mathbf{T}_2\\ T_1\cap CQ^\lambda_R\neq\emptyset,\,\bar{T}_2\cap CQ^\lambda_R\neq\emptyset}}
			\frac{1}{m_{\bar{T}_2}^{F^\lambda}}\iint \chi_\qq^8\frac{\Bigl|(\psi_{T_1}^{-1}F_{T_1}^\lambda)G^\lambda_{\bar{T}_2}\Bigr|^2}{\psi_{\bar{T}_2}(\xxx_\qq,t_\qq)^2}d\xxx dt.
\end{equation}

To estimate \eqref{eq:max}, plug
$$ m_{T_2}^{F^\lambda,\,\varDelta'}=\sum_{\qq'\in\mathsf{K}_{Q^*}(\varDelta')}\sum_{T_1'\in\mathbf{T}_1}\bigl\|\chi_{\qq'}\, \psi_{T_2}\,\psi_{T_1'}^{-50}\,F_{T_1'}^\lambda\bigr\|^2_{L^2(\R^{n+2})}
$$
into \eqref{eq:max}. Let $\chi_{\varDelta'}:=\sum_{\qq'\in\mathsf{K}_{Q^*}(\varDelta')} \chi_{\qq'}$
and
$$	\mathcal{W}_{\qq,\varDelta',T_1,\bar{T}_2}^{\lambda,R^{1/2}}(\xxx,t):=\chi_{\varDelta'}(\xxx,t)\sum_{\substack{T_2\in\mathbf{T}_2,\;
	\mu_{T_2}\in\Pi^{R^{-1/2}}_{\mu_{T_1},\mu_{\bar{T}_2}}\\ T_2\cap CQ^\lambda_R\neq\emptyset}}
\bigl(\psi_{T_2}(\xxx_\qq,t_\qq)\psi_{T_2}(\xxx,t)\bigr)^2.
$$
Rearranging the order of summation,  we find that  \eqref{eq:max} can be  bounded with
\begin{multline}
\label{eq:good} \max_{\substack{\varDelta''\in\mathcal{Q}_{C_0}(Q^*)\setminus\{\varDelta'\}\\ \qq\in\mathsf{K}_{Q^*}(\varDelta'')}}\max_{T_1\in\mathbf{T}_1,\bar{T}_2\in\mathbf{T}_2,}
		\sum_{T_1'\in\mathbf{T}_1}\\
\int\chi_{\varDelta'}(\xxx,t) \,\Bigl|\psi_{T_1'}(\xxx,t)^{-50} F_{T'_1}^\lambda(\xxx,t)\Bigr|^2\mathcal{W}_{\qq,\varDelta',T_1,\bar{T}_2}^{\lambda,R^{1/2}}(\xxx,t)\;d\xxx dt\,.
\end{multline}

	To estimate \eqref{eq:good}, we note that for any $\varDelta''\in\mathcal{Q}_{C_0}(Q^*)\setminus\{\varDelta'\}$  , $\varDelta''$ is either separated from $\varDelta'$  along the vertical time-direction by $\O(\varpi 2^{-C_0}\lambda R)$ or  horizontally in the $\xxx-$direction separated away from $\varDelta'$ by a distance  $\varpi 2^{-C_0}R$, where in the second case, the projection of $\varDelta''$ to the temporal component coincides with that of $\varDelta'$.
	If we think of $\psi_{T_2}$ as the indicator function of $T_2$ and likewise for $\chi_{\varDelta'}$, neglecting the Schwartz tails for the moment, then the above separation property implies that $\mathcal{W}_{\qq,\varDelta',T_1,\bar{T}_2}^{\lambda,R^{1/2}}$ is bounded by the characteristic function of the intersection of $\varDelta'$ and
\begin{equation}
\label{eq:UT}
\bigcup_{\substack{T_2\in\mathbf{T}_2,\;
\mu_{T_2}\in\Pi^{R^{-1/2}}_{\mu_{T_1},\mu_{\bar{T}_2}}\\  z_\qq\in T_2\cap CQ^\lambda_R\neq\emptyset}}T_2
	\end{equation}
which is  contained in an $\O(R^{1/2})-$neighbourhood of $\mathbf{ \Lambda}_2^\lambda+z_\qq$ where $z_{\qq}=(\xxx_\qq,t_\qq)$.
This is ensured by the above separateness condition between $\varDelta''$ and $\varDelta'$.
	In fact,  in the first case, since the directions of the $T_2$-tubes are $\O(R^{-1/2}\lambda^{-1})$ separated, any point in $\varDelta'$ belongs to at most $\O(\varpi^{-O(1)})$  many $T_2$ tubes passing through $z_\qq$ (a bush at $z_\qq$). In the second case, since $\qq\subset \varDelta''$, if one starts from $\xxx_\qq$ and travels along the direction $(\frac{\mu_{T_2}}{\lambda},-\frac{|\mu_{T_2}|^2}{2\lambda^2})$,  then one needs spend for a period of time  at least of length $\gtrsim R\lambda$ to arrive at the image of the projection from $\varDelta'$ to $\R_\xxx^{n+1}$. Consequently, these tubes barely meets $\varDelta'$. By the $R^{-1/2}-$separateness
	of the $\mu_{T_2}$'s, we obtain a uniform $\O(\varpi^{-O(1)})$ bound on the multiplicity of overlappings. Thus, by affording  a constant $\varpi^{-O(1)}$, the characteristic function of the set \eqref{eq:UT} is bounded  by that of  $\mathbf{ \Lambda}_2^\lambda(z_\qq, R^{1/2})$. \medskip

	To clarify the idea of the proof based on this observation, let us think of $\psi_{T_1'}^{-50}F^\lambda_{T_1'}$ as the characteristic function of the tube $T_1'$ up to some constants, which is reasonable since $F^\lambda_{ T_1'}$ has sufficent decay to eat the growth of $\psi_{T_1'}^{-50}$ in view of \eqref{eq:decay-wp-0}. Using the decomposition $F^\lambda_{T'_1}=\sum_k F^\lambda_{T'_1,k}$, where we  neglect the $F^{\lambda,b}_{T'_1}$ part by confining $k$ in the summation such that $|k-\nu_{n+1}|\lesssim \varpi^{-2}\varrho$, we may
	partition $T_1'=\cup_{\mathfrak{ V}\subset T_1'}\mathfrak V$ with $\mathfrak{ V}$ being parallel plates and of dimensions $1\times \underbrace{R^{1/2}\times \cdots \times R^{1/2}}_{n\text{ times}}\times R\lambda$ such that they are in direction of $T_1'$. Then the transversality condition implies $\mathfrak{ V}\cap T_2$ is contained in a rectangle of dimension $1\times \underbrace{R^{1/2}\times\cdots\times R^{1/2}}_{n\text{ times}}\times \lambda R^{1/2}$ for  any $T_2$ in \eqref{eq:UT} and any $\mathfrak{ V}\subset T_1'$ in the above sense. Next, noting that by the more demanding
	 transversality condition that $\forall\, T_2$ in \eqref{eq:UT}, $\mu_{T_2}$ must satisfy $\mu_{T_2}\in \Pi_{\mu_{T_1},\mu_{\bar{T}_2}}^{R^{-1/2}}$ for the given $T_1,\bar{T}_2$, we claim that there are at most $\O(1)$ many $T_2$'s in \eqref{eq:UT}, which intersect  in a common sector of length $\lambda R^{1/2}$ with  any fixed $\mathfrak{V}$, taken from the partition of any tube $T_1'$ as above. This is because, $\Pi_{\mu_{T_1},\mu_{\bar{T}_2}}\subset\R^n_\xi$ is an $(n-1)$ dimensinal  hyperplane passing through $\mu_{\bar{T}_2}$ and orthogonal to the vector $(\mu_{T_1}-\mu_{\bar{T}_2})$, from which one is convinced with the claim by noting the nowhere vanishing curvature of paraboloids.
	
	 Indeed, if we let $\ell_T$ be the axis of a given tube $T$, then in view of the $\O(R^{1/2})$ perturbation, namely $T\subset\ell_T^{R^{1/2}}$
 where $\ell^{R^{1/2}}$ is the $\O(R^{1/2})$ neighborhood of $\ell$. For any $T_1'$ and $\mathfrak{ V}\subset T_1'$, partition $\mathfrak{ V}$ into the union of sectors $\{S_j\}_{j=1,\ldots, O(\sqrt{R})}$ of length $\lambda R^{1/2}$, i.e. $\mathfrak{ V}=\cup_j S_j$. For any $T_2,T_2'$ taken  from the bush \eqref{eq:UT}, such that for some fixed $j$, we have $T_2\cap S_j\ne\emptyset$ and $T_2'\cap S_j\ne\emptyset$, then we have 
 $$
 \ell^{CR^{1/2}}_{T_1'}\cap \ell^{CR^{1/2}}_{T_2}\cap \ell^{CR^{1/2}}_{T_2'}\neq\emptyset,\;
 $$
 for some fixed large $C$,
 which entails that 
  $\mu_{T_2},\mu_{T_2'}$ and $\mu_{T_1'}$ must be almost colinear. Let  $\ell_*=\ell(\mu_{T_2},\mu_{T_2'},\mu_{T_1'})\subset\R^n$
be the  line such that $\mu_{T_2},\mu_{T_2'},\mu_{T_1'}\in \ell_*^{CR^{-1/2}}$. Then $\ell_*$ intersects with $\Pi_{\mu_{T_1},\mu_{\bar{T}_2}}$ transversely and the claim follows immediately. We have thus concluded the result, provided that it is legitimate to  neglect the effects from Schwartz tails.

For the general case, one may incorporate the Schwartz tails by the standard dyadic decomposition  exploring the rapid decay of $F^{\lambda,b}_{T_1'}$, $\psi_{T_2}$ and $\chi_{\varDelta'}$ etc. away from the geometric objects where concentration occurs. We refer to \cite{Wolff} and omit the details.

Therefore,  by Lemma \ref{lem:wp-d} and write
$$F^\lambda_{T_1'}(\xxx,t)=\underbrace{\sum_{k:\mathfrak{ V}^\lambda_{\nu,\mu_{T_1'},k}\subset T_1'} c_{\vvv,\mu_{T_1'},k}\,\phi_{\vvv,\mu_{T_1'},k}(\xxx,t)}_{F^{\lambda,g}_{T'_1}}+F^{\lambda,b}_{T'_1}.
$$
Using Cauchy-Schwarz, we bound the contribution of $F^{\lambda,g}_{T_1'}$ to \eqref{eq:good} with
\begin{multline}
\sum_{T_1'}\sum_{k:\;\mathfrak{ V}^\lambda_{\nu,\mu_{T_1'},k}\subset T_1'} c_{\vvv,\mu_{T_1'},k}^2\underbrace{\int \chi_{\varDelta'} \psi_{T_1'}(\xxx,t)^{-100}\phi_{\vvv,\mu_{T_1'},k}(\xxx,t)
	\mathcal{W}_{\qq,\varDelta',T_1,\bar{T}_2}^{\lambda,R^{1/2}}(\xxx,t)d\xxx dt}_{\text{bounded by }O(\lambda R^{\frac{n+1}{2}})}\\
 \lesssim\;\lambda  R^{1/2}\Bigl( R^{\frac{n}{2}}\sum_{\vvv,\mu,k} c_{\vvv,\mu,k}^2\Bigr)
 \lesssim \;\lambda R^{1/2}\varpi^{-O(1)}
	  \label{eq:Good-1}
 \end{multline}
 where we have used \eqref{eq:L-2-sum} in the last estimate. For the $F^{\lambda,b}_{T_1'}$, one use the decay in \eqref{eq:decay-wp-0} and dyadically decompose for each $T_1'$, the summation over $k$ into segments $|k-\nu_{n+1}|\sim 2^\gamma \varpi^{-2}\varrho $ for $\gamma\ge 1$ and apply the same argument as for $F_{T_1'}^{\lambda,g}$ and summing over the dyadic pieces to complete the proof.\medskip

 To estimate \eqref{eq:trans}, we use Wolff's Bernstein type inequality \cite[Lemma 3.2]{Wolff}
 $$
 \bigl\|\chi_\qq^3 \psi_{T_1}^{-1} F_{T_1}^\lambda\bigr\|_{L^\infty}\lesssim R^{-\frac{n+1}{4}}\lambda^{-1/2}\|\psi_{T_1}^{-50}F^\lambda_{T_1}\chi_\qq\|_{L^2}.
 $$
 In fact, the support $\wh{F^\lambda_{T_1}}*\wh{\chi_\qq}$
 is contained in a  $1\times\underbrace{ R^{-1/2}\times\cdots\times R^{-1/2}}_{n\text{  times}}\times \lambda^{-1}R^{-1/2}$ rectangle. Reproducing
 $\chi_\qq F_{T_1}^\lambda=\bigl(\chi_\qq F_{T_1}^\lambda\bigr)*\rho_{R^{1/2}}^\lambda$ for some $L^1-$normalized Schwartz function  $\rho_{R^{1/2}}^\lambda$ essentially supported in a box of volume $\lambda R^{\frac{n+1}{2}}$, it is easy to see the $L^2\to L^\infty$ norm of the operator associated to the kernel
 $$
 \mathscr{H}(\xxx,t;\xxx',t'):=
 \chi_\qq^2(\xxx,t)\psi_{T_1}^{-1}(\xxx,t)\rho_{R^{1/2}}^\lambda(\xxx-\xxx',t-t')\psi_{T_1}^{50}(\xxx',t')
 $$
 is bounded by $\O(\lambda^{-\frac{1}{2}}R^{-\frac{n+1}{4}})$.

 Thus \eqref{eq:trans} is bounded by
 \begin{multline*}
 R^{-\frac{n+1}{2}}\lambda^{-1}	\sum_{\substack{\varDelta''\in\mathcal{Q}_{C_0}(Q^*)\setminus\{\varDelta'\}\\ \qq\in\mathsf{K}_{Q^*}(\varDelta'')}}\sum_{\substack{T_1\in\mathbf{T}_1,\bar{T}_2\in\mathbf{T}_2\\ T_1\cap CQ^\lambda_R\neq\emptyset,\,\bar{T}_2\cap CQ^\lambda_R\neq\emptyset}}\\
 \Bigl(\psi_{\bar{T}_2}(\xxx_\qq,t_\qq)^2\|\psi_{T_1}^{-50}F^\lambda_{T_1}\chi_\qq\|_{L^2}^2/m_{\bar{T}_2}^{F^\lambda}\Bigr)\int \chi_\qq^2\bigl|G^\lambda_{\bar{T}_2}\bigr|^2\psi_{\bar{T}_2}(\xxx_\qq,t_\qq)^{-10}d\xxx dt,
 \end{multline*}
 which is bounded by $\O(R^{-\frac{n}{2}})$, once we evoke the definition of $m_{\bar{T}_2}^{F^\lambda}$ along with the concentration property of $G^\lambda_{\bar{T}_2}$, leading to the estimate
 $$
 \sum_{\bar{T}_2}\sup_{\qq}\| \chi_\qq G^\lambda_{\bar{T}_2}\|_2^2\psi_{\bar{T}_2}(\xxx_\qq,t_\qq)^{-10}\le\varpi^{-O(1)} \lambda R^{1/2}.
 $$
 This estimate is readily deduced by using the wave packet decomposition in Lemma \ref{lem:wp-d} and the same argument in Appendix I of  \cite{TaoMZ}. We only remind the reader that the lifespan of $\qq$ here is multiplied by $\lambda$ which is the sole difference between the proof of the above estimate and (62) of \cite{TaoMZ}.
 \medskip

 Collecting the estimates on \eqref{eq:max} and \eqref{eq:trans}, we obtain \eqref{eq:bil-kakeya} and the proof is complete.

\end{proof}

\section{The Huygens principle: spatial localization}\label{sect:huygens}

In this section, we summarize some properties of the spatial localization
operators introduced  in \cite{TaoMZ} to capture the energy concentration of red and blue waves.

We start with some notations.
A \emph{disk} is a subset $D$ resident in $\R^{n+2}_{\xxx,t}$ of the form $$D=D(\xxx_D,t_D;r_D)=\{(\xxx,t_D):|\xxx-\xxx_D|\le r_D\},$$
for some $(\xxx_D,t_D)\in \R^{n+2}$ and $r_D> 0$.
We call $t_D$ the \emph{time coordinate} of  $D$ and $r_D$ the \emph{radius} of $D$.
The indicator function of $D$ is defined as $$\indic_D(\xxx)=\begin{cases}
1\, ,\;(\xxx,t_D)\in D\,,\\  0\;,\; (\xxx,t_D)\not\in D\,.
\end{cases}$$
Let
$
D_{\pm}:=D\bigl(\xxx_D,t_D;r_D(1\pm r_D^{-\frac{1}{2N}})\bigr).
$
For any $c>0$, let  $c D:=D(\xxx_D,\,t_D; c\, r_D)$.
Define the  \emph{disk exterior} of $D$ as
\begin{equation}
\label{eq:ext}
D^{\mathsf{ext}}=D^{\mathsf{ext}}(\xxx_D,t_D;r_D)=\{(\xxx,t_D):\, |\xxx-\xxx_D|>r_D\}.
\end{equation}
For any $u\in L^\infty_{loc}(\R^{n+2})$ and disk $D$,  we write
\begin{align*}
&\|u\|_{L^2(D)}:=\;\Bigl(\int_{|\xxx-\xxx_D|\le r_D}\bigl|u(\xxx,t_D)\bigr|^2d\xxx\Bigr)^\frac{1}{2},\\\;\;
&\|u\|_{L^2(D^{\mathsf{ext}})}:=\;\Bigl(\int_{|\xxx-\xxx_D|> r_D}\bigl|u(\xxx,t_D)\bigr|^2d\xxx\Bigr)^\frac{1}{2}.
\end{align*}
\subsection{The localization operator $P_D$}
In order to localize the red and blue waves in the physical space, we introduce the localization operator $P_D$ as in \cite{TaoMZ}.
Let $\mathbf{\Upsilon}_0(\xxx)\ge 0$ be the Schwartz function in the proof of Lemma \ref{lem:wp-d}.
For every $r>0$, we set $\mathbf{ \Upsilon}_r(\xxx)=r^{-(n+1)}\mathbf{\Upsilon}_0(r^{-1}\xxx)$.

\begin{definition}
	Let $F^\lambda(t)=S^\lambda(t)f$ with $f\in\mathcal{S}(\R^{n+1})$ such that $ \supp\,\wh{f}\subset\mathcal{B}$.
	For any disk $D=D(\xxx_D,t_D;r_D)$, we define $P_D F^\lambda$ at time $t_D$ as
$$
P_D F^\lambda(t_D)=\Bigl(\indic_D*\mathbf{ \Upsilon}_{r_D^{1-\frac{1}{N}}}\Bigr)\,F^\lambda(t_D),
$$
and  $\forall \;t\in\R$
$$\bigl(P_D F^\lambda\bigr)(t)=S^{\lambda}(t-t_D)\bigl[\,P_D F^\lambda(t_D)\bigr].$$
\end{definition}

The localization operator $P_D$ behaves almost like
a sharp cut-off function, by admitting small mismatching between the margins of $D$ and $D^{\mathsf{ext}}$.
In particular, $P_D$ localizes a wave to $D_+$ and $1-P_D$ localizes to the exterior of $D_-$.
\begin{lemma}
	\label{lem:Tao10.2}
	Let $D$ be a disk with radius $r_D=r\ge C_0$. Then,  $P_D F^\lambda$ satisfies the following local energy estimates
\begin{align}
\label{2.16}\| P_D F^\lambda\|_{L^2(D_+^\mathsf{ext})}\lesssim& \; r^{-N}\EEE(F^\lambda)^{1/2}\\
\label{2.17}	
	\|(1-P_D)F^\lambda\|_{L^2(D_-)}\lesssim & \; r^{-N}\EEE(F^\lambda)^{1/2}\\
\label{2.18}
	\sup_{t}\|P_D F^\lambda(t)\|_{L^2(\R^{n+1})}^2\le&\; \|F^\lambda\|_{L^2(D_+)}^2+\O( r^{-N} \EEE(F^\lambda))\\
\label{2.19}\sup_{t} \|(1-P_D)F^\lambda(t)\|_{L^2(\R^{n+1})}^2\le&\; \|F^\lambda\|_{L^2( D^{\mathsf{ext}}_-)}^2+\mathcal{O}(r^{-N}\EEE(F^\lambda))\\
\label{2.20}\sup_{t} \|(1-P_D )F^\lambda(t)\|_{L^2(\R^d)}\le&\; \EEE(F^\lambda)^{\frac{1}{2}},\; \sup_{t}\|P_D  F^\lambda(t)\|_{L^2(\R^2)}\le \EEE(F^\lambda)^{\frac{1}{2}},
\end{align}
where $D_\pm^\mathsf{ext}$ is the exterior of $D_\pm$ in the sense of \eqref{eq:ext}.
\end{lemma}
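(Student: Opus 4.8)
The plan is to reduce all five estimates to two soft inputs: the pointwise behaviour, on the time slice $t=t_D$, of the smoothed indicator defining $P_D$, and the $L^2$-contractivity of the propagator $S^\lambda$. Throughout write $r=r_D\ge C_0$ and $m_D:=\indic_D*\mathbf{\Upsilon}_{r^{1-1/N}}$, so that $P_DF^\lambda(t_D)=m_D\,F^\lambda(t_D)$, $(1-P_D)F^\lambda(t_D)=(1-m_D)\,F^\lambda(t_D)$, while $(P_DF^\lambda)(t)=S^\lambda(t-t_D)\bigl[m_D F^\lambda(t_D)\bigr]$ and $(1-P_D)F^\lambda(t)=S^\lambda(t-t_D)\bigl[(1-m_D)F^\lambda(t_D)\bigr]$ for every $t$. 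The key geometric point is that the mollification width $r^{1-1/N}$ sits a full power $r^{1/(2N)}$ below the mismatch margin $r^{1-1/(2N)}$ separating $D$ from the dilated and contracted disks $D_\pm$.

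First I would record the elementary features of $m_D$. Since $\mathbf{\Upsilon}=\zeta_0(x)\eta_0(x_{n+1})\ge0$ and, by the normalisation of $\wh{\zeta}_0,\wh{\eta}_0$ at the origin in the proof of Lemma~\ref{lem:wp-d}, $\int_{\R^{n+1}}\mathbf{\Upsilon}=1$, each $\mathbf{\Upsilon}_r$ is a non-negative $L^1$-normalised bump, whence $0\le m_D\le1$ pointwise. Convolving this probability density against $\indic_D$ and against $\indic_{D^{\mathsf{ext}}}$ and using the rapid decay of $\mathbf{\Upsilon}$, for every integer $M\ge1$
$$
m_D(\xxx)\lesssim_M\bigl(1+r^{-1+1/N}\,\mathsf{dist}(\xxx,D)\bigr)^{-M},\qquad
1-m_D(\xxx)\lesssim_M\bigl(1+r^{-1+1/N}\,\mathsf{dist}(\xxx,D^{\mathsf{ext}})\bigr)^{-M}.
$$
When $\xxx\in D_+^{\mathsf{ext}}$ one has $\mathsf{dist}(\xxx,D)\ge r^{1-1/(2N)}$, and when $\xxx\in D_-$ one has $\mathsf{dist}(\xxx,D^{\mathsf{ext}})\ge r^{1-1/(2N)}$; since $r^{1-1/(2N)}/r^{1-1/N}=r^{1/(2N)}$ and $r\ge C_0$, taking $M$ large depending only on $N$ gives
$$
\sup_{\xxx\in D_+^{\mathsf{ext}}}m_D(\xxx)+\sup_{\xxx\in D_-}\bigl(1-m_D(\xxx)\bigr)\lesssim r^{-2N}.
$$

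Next I would fix the amplitudes with $0\le a_j\le1$, so that Plancherel yields $\|S^\lambda(s)\|_{L^2(\R^{n+1})\to L^2(\R^{n+1})}\le1$ for every $s$; consequently $\sup_t\|F^\lambda(t)\|_{L^2}\le E(F^\lambda)^{1/2}$, $\sup_t\|P_DF^\lambda(t)\|_{L^2}\le\|m_DF^\lambda(t_D)\|_{L^2}$, and likewise with $1-P_D$ and $1-m_D$. Then \eqref{2.20} is immediate from $0\le m_D\le1$. For \eqref{2.16} and \eqref{2.17}, evaluate at $t=t_D$ and bound $\|m_DF^\lambda(t_D)\|_{L^2(D_+^{\mathsf{ext}})}\le\|m_D\|_{L^\infty(D_+^{\mathsf{ext}})}\|F^\lambda(t_D)\|_{L^2}\lesssim r^{-2N}E(F^\lambda)^{1/2}\le r^{-N}E(F^\lambda)^{1/2}$, and symmetrically $\|(1-m_D)F^\lambda(t_D)\|_{L^2(D_-)}\le\|1-m_D\|_{L^\infty(D_-)}\|F^\lambda(t_D)\|_{L^2}\lesssim r^{-N}E(F^\lambda)^{1/2}$. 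For \eqref{2.18} I would split $\|m_DF^\lambda(t_D)\|_{L^2(\R^{n+1})}^2$ over $D_+$ and $D_+^{\mathsf{ext}}$, using $m_D\le1$ on the first piece and $m_D\lesssim r^{-2N}$ on the second, which produces $\|F^\lambda\|_{L^2(D_+)}^2+\O(r^{-N}E(F^\lambda))$; symmetrically, \eqref{2.19} comes from splitting $\|(1-m_D)F^\lambda(t_D)\|_{L^2(\R^{n+1})}^2$ over $D_-^{\mathsf{ext}}$ and $D_-$.

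I do not expect a genuine obstacle here: the only thing to keep honest is the two-scale bookkeeping, namely that the mollification scale $r^{1-1/N}$ is chosen a power $r^{1/(2N)}$ below the margin $r^{1-1/(2N)}$, and that $M$ is taken large relative to $N$ so the gain $r^{-M/(2N)}$ beats the advertised $r^{-N}$. Unlike the sharper Huygens-type bounds that follow in this section, nothing about the concentration of the kernel $\mathcal{K}^\lambda$ near the cone $\mathbf{\Lambda}^\lambda$ is used; only Plancherel and the $L^2\to L^2$ contractivity of $S^\lambda$ enter. (The $L^2(\R^d)$ and $L^2(\R^2)$ appearing in \eqref{2.20} are of course to be read as $L^2(\R^{n+1})$.)
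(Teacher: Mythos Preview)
Your argument is correct and follows essentially the same route as the paper: both proofs rest on the pointwise bounds $m_D\lesssim r^{-N}$ on $D_+^{\mathsf{ext}}$ and $1-m_D\lesssim r^{-N}$ on $D_-$ coming from the Schwartz decay of $\mathbf{\Upsilon}$ and the scale gap $r^{1-1/N}\ll r^{1-1/(2N)}$, combined with $0\le m_D\le1$ and the $L^2$-boundedness of $S^\lambda$ via Plancherel. Your presentation is somewhat more systematic---recording both decay estimates for $m_D$ and $1-m_D$ symmetrically at the outset---whereas the paper derives them inline as each inequality is treated, but the content is identical.
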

\begin{proof}
	The argument is exactly same to \cite{TaoMZ} and we sketch it below.
	To see \eqref{2.16},  for any $x\in D_+^{\mathsf{ext}}$ and any $\xxx'$ such that $(\xxx',t_D)\in D$, one has
	$$
	|\xxx-\xxx'|\ge |\xxx-\xxx_D|-|\xxx'-\xxx_D|\ge r^{1-\frac{1}{2N}}.
	$$
	The rapid decay of $\mathbf{ \Upsilon}_0\in
	\mathcal{ S}(\R^{n+1})$ entails
	$$
	\sup_{\xxx:\;(\xxx,t_D)\in D^{\mathsf{ext}}_+}\Bigl|\,\indic_D*\mathbf{ \Upsilon}_{r^{1-\frac{1}{N}}}(\xxx)\,\Bigr|\lesssim\,r^{-N}.
	$$
	By using the Plancherel theorem, we get \eqref{2.16}.\smallskip
	
	To show \eqref{2.17}, we use $\int \mathbf{ \Upsilon}_0=1$  and the rapid decay of $\mathbf{ \Upsilon}_0 $  to get
	\begin{multline}
	\Bigl|1-\bigl(\indic_D*\mathbf{ \Upsilon}_{r^{1-\frac{1}{N}}}\bigr)(\xxx)\Bigr|=\Bigl|\int\bigl[1-\indic_D(\xxx-\xxx')\bigr]\mathbf{ \Upsilon}_{r^{1-\frac{1}{N}}}(\xxx')d\xxx'\Bigr|\\
	\lesssim_M\; \bigl(1+ r^{-1+\frac{1}{N}}\;\mathsf{dist}\bigl(\xxx,\;D^\mathsf{ext}\bigr)\bigr)^{-M}\,,\label{eq:*-K**}
	\end{multline}
	where for any $\xxx\in D_{-}$,  we have that $\indic_D(\xxx-\xxx')=0$ implies
	\begin{align*}
	|\xxx'|\ge |\xxx-\xxx'-\xxx_D|-|\xxx_D-\xxx|
	\ge r_D-|\xxx-\xxx_D|\ge \mathsf{dist}(\xxx,D^\mathsf{ext}).
	\end{align*}
	Using \eqref{eq:*-K**}, we obtain \eqref{2.17} by taking $M$ sufficiently large.\smallskip
	
	Next, splitting
	$$
	\|P_D F^\lambda(t_D)\|_{L^2}^2=\|P_D F^\lambda\|_{L^2(D_+)}^2+\|P_D F^\lambda\|^2_{L^2(D_+^\mathsf{ext})},
	$$
	where the second term is bounded by $\O(r^{-N}\EEE(F^\lambda))$ due to \eqref{2.16},
	we get \eqref{2.18} by using $0\le \indic_D*\mathbf{ \Upsilon}_{r^{1-\frac{1}{N}}}\le 1$. Similarly, we get \eqref{2.19}.
	Finally,  \eqref{2.20} is obvious. The proof is complete.
	
\end{proof}

\subsection{Concentration of red and blue waves}
In this subsection, we use the localization property of  $P_D$ to characterize the energy concentration of red and blue waves as in \cite{TaoMZ}.

\begin{lemma}
	\label{lem:conctr}
	Let $r\ge C_0$ and $D= D(z_D;\,r)$ with $z_D=(\xxx_D,\,t_D)$. For any red and blue waves $F^\lambda(t)=S^{\lambda}(t)f$ and $G^\lambda(t)=S^{\lambda}(t)g$ where $\wh{f}$ and $\wh{g}$ are supported in $\widetilde{V}_1\times I$ and $\widetilde{V}_2\times I$ respectively, we have
	\begin{align}
	\label{2.21}
	\|P_D F^\lambda\|_{L^\infty\bigl(\R^{n+2}_{\xxx,t}\setminus \mathbf{ \Lambda}^\lambda_1(z_D,\,r(1+r^{-\frac{1}{2N}}))\bigr)}\lesssim_M\,r^{-M}
	\EEE(F^\lambda)^{1/2},\\
	\label{2.21-b}
	\|P_D G^\lambda\|_{L^\infty\bigl(\R^{n+2}_{\xxx,t}\setminus\mathbf{ \Lambda}^\lambda_{2}(z_D,\,r(1+r^{-\frac{1}{2N}}))\bigr)}\lesssim_M\, r^{-M}
	\EEE(G^\lambda)^{1/2},	
	\end{align}
	for all  $ \, M\ge 1$ where  for $j=1,2$, $\mathbf{ \Lambda}^{\lambda}_j(z_D,r(1+r^{-\frac{1}{2N}}))$ is defined to be the conic $\O(r(1+r^{-\frac{1}{2N}}))-$neighbourhood of  $\mathbf{ \Lambda}_j^\lambda+z_D$ as in Lemma \ref{lem:opposite}.
\end{lemma}
\begin{proof}
	By symmetry, we only show the first estimate \eqref{2.21}. The argument is the same with \eqref{2.16} in  Lemma \ref{lem:Tao10.2}. 
	Let $\mathcal{K}_{j}^\lambda(\xxx,t)$ be as in Proposition \ref{pp:KKK} and put 
$$ \mathfrak{A}^D_{1,t}(\xxx,\xxx')=\mathcal{K}^\lambda_{1}(\xxx-\xxx',t-t_D)\,\bigl(\indic_D*\mathbf{ \Upsilon}_{r^{1-\frac{1}{N}}}\bigr)(\xxx')\, .
$$
Then
\begin{equation}
\label{eq:P_DF}
P_D F^\lambda(t,\xxx)=
\int\mathfrak{A}_{1,t}^D(\xxx,\xxx')\,F^\lambda(\xxx',t_D)\,d\xxx' .
\end{equation}
By \eqref{eq:KKK}, we have
\begin{equation}
	\label{heart}
	|\mathcal{K}_{1}^\lambda(\xxx-\xxx',t-t_D)|\lesssim_M \bigl(1+\mathsf{dist}\bigl((\xxx-\xxx',t-t_D)\,,{\mathbf{\Lambda}^\lambda_{ 1}}\,\bigr)\bigr)^{-100MN}.
	\end{equation}
Assume that $(\xxx,t) \notin \mathbf{ \Lambda}_1^\lambda(z_D,r(1+r^{-\frac{1}{2N}}))$. Then, we have
\begin{equation}
\label{eq:ex-est-lower}
\bigl|\xxx-\xxx_D-(t-t_D)\bigl(v,-\frac{|v|^2}{2}\bigr)\bigr|\gtrsim r(1+r^{-\frac{1}{2N}}),\quad\forall\;v\in\,2\,\Xi_1^\lambda.
\end{equation}
Decompose the domain of integration for $\xxx'$ in \eqref{eq:P_DF} into the local part where for some large constant $C$, we have
$$
\mathsf{dist}\bigl((\xxx-\xxx',t-t_D),\mathbf{ \Lambda}^\lambda_{ 1}\bigr)\le C^{-1}r(1+r^{-\frac{1}{2N}})
$$
and the global part 
$$
\mathsf{dist}\bigl((\xxx-\xxx',t-t_D),\mathbf{ \Lambda}^\lambda_{ 1}\bigr)\ge C^{-1}r(1+r^{-\frac{1}{2N}}).
$$

For the global part, \eqref{2.21} follows from using 
\eqref{heart} and the Cauchy-Schwarz inequality. 

For the local part, we have for some $v_1\in\,2\,\Xi_1^\lambda$
$$
\bigl|\xxx'-\xxx+(t-t_D)\bigl(v_1,-\frac{|v_1|^2}{2}\bigr)\bigr|\lesssim C^{-1}r(1+r^{-\frac{1}{2N}})
$$ 
for $C$ sufficiently large. Writing
$$
\indic_D*\mathbf{ \Upsilon}_{r^{1-\frac{1}{N}}}(\xxx')=\int \indic_D(\xxx'')\mathbf{ \Upsilon}_{r^{1-\frac{1}{N}}}(\xxx'-\xxx'')d\xxx'',
$$
and noting that by \eqref{eq:ex-est-lower}
\begin{align*}
	|\xxx'-\xxx''|\ge&\, |\xxx'-\xxx_D|-|\xxx''-\xxx_D|\\
	\gtrsim&\, \Bigl|\xxx-\xxx_D-(t-t_D)\bigl(v_1,-\frac{|v_1|^2}{2}\bigr)\Bigr|-C^{-1}r(1+r^{-\frac{1}{2N}})-r\gtrsim r^{1-\frac{1}{2N}},
\end{align*}
we obtain \eqref{2.21} by using the rapid decay of $\mathbf{ \Upsilon}_0$. The proof is complete.
\end{proof}
\begin{remark}
	The above lemma is analogous to Lemma 10.3 of \cite{TaoMZ}. 
\end{remark}

We write a $\lambda-$stretched  cube $Q^\lambda=Q^\lambda(\xxx_Q,t_Q;r_Q)$  of side-length $r_Q$ centered at $(\xxx_Q,t_Q)\in\R^{n+2}$.  For any $C>0$, we write $C Q^\lambda=Q^\lambda(\xxx_Q,t_Q;C r_Q)$.
\begin{lemma}
	\label{lem:conctr'}
	Let $r\ge C_0$ and $D= D(z_D;\,r)$ with $z_D=(\xxx_D,\,t_D)$. For any red and blue waves $F^\lambda$ and $G^\lambda$, we have for large $C$
	\begin{align}
	\label{2.22'}
	\|(1-P_D)F^\lambda\|_{L^\infty(Q(z_D,\,r/C))}\lesssim&\, r^{-N} \EEE(F^\lambda)^{1/2},\,\\
	\|(1-P_D)G^\lambda\|_{L^\infty(Q(z_D,\,r/C))}\lesssim&\, r^{-N} \EEE(G^\lambda)^{1/2}.
	\end{align}
\end{lemma}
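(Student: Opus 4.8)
The plan is to localise $(1-P_D)F^\lambda$ at the initial time slice $t_D$ and then propagate. By the definition of $P_D$ and the reproducing identity $S^\lambda(t-t_D)[F^\lambda(t_D)]=F^\lambda(t)$ (valid since $\wh{F^\lambda(t_D)}$ is supported where $a_1\equiv1$), one has for every $t$ the identity $(1-P_D)F^\lambda(t)=S^\lambda(t-t_D)g_D$, with
$$
g_D:=(1-P_D)F^\lambda(t_D)=\bigl(1-\indic_D*\mathbf{\Upsilon}_{r^{1-1/N}}\bigr)F^\lambda(t_D).
$$
Since $0\le\indic_D*\mathbf{\Upsilon}_{r^{1-1/N}}\le1$ and $|S^\lambda(\tau)f|\le\|a_1\wh{f}\|_{L^1}\lesssim\|f\|_2$, we have the crude pointwise bound $|g_D|\le|F^\lambda(t_D)|\lesssim E(F^\lambda)^{1/2}$. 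I would then split $g_D=g_D\indic_{D_-}+g_D(1-\indic_{D_-})$: on $D_-$ the wave is genuinely small by \eqref{2.17} of Lemma~\ref{lem:Tao10.2}, while the complement of $D_-$ sits so far from the small $\lambda$-stretched cube $Q(z_D,r/C)$ that finite speed of propagation for $S^\lambda$ will dispose of it. The case of $G^\lambda$ is identical, so I only treat $F^\lambda$.

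For the first term, \eqref{2.17} gives $\|g_D\indic_{D_-}\|_{L^2}=\|(1-P_D)F^\lambda\|_{L^2(D_-)}\lesssim r^{-N}E(F^\lambda)^{1/2}$. Since $S^\lambda(t-t_D)$ has $L^2\to L^2$ norm at most $1$ and outputs functions whose spatial Fourier transform is confined to the fixed bounded set $\supp a_1$, a local Bernstein (reproducing-kernel) estimate $|S^\lambda(t-t_D)h(\xxx)|=|(S^\lambda(t-t_D)h)*\Phi(\xxx)|\le\|S^\lambda(t-t_D)h\|_{L^2}\|\Phi\|_{L^2}$, with $\Phi\in\mathcal{S}(\R^{n+1})$ a fixed bump whose Fourier transform equals $1$ on $\supp a_1$, turns this into the uniform bound $\|S^\lambda(t-t_D)[g_D\indic_{D_-}]\|_{L^\infty(\R^{n+2})}\lesssim r^{-N}E(F^\lambda)^{1/2}$. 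This step does not even use the shape of $Q(z_D,r/C)$.

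For the second term, $g_D(1-\indic_{D_-})$ is supported in $\{\xxx':|\xxx'-\xxx_D|>r(1-r^{-1/(2N)})\ge r/2\}$, and by Proposition~\ref{pp:KKK} one has $S^\lambda(t-t_D)h=\mathcal{K}_1^\lambda(\cdot,t-t_D)*h$ with $|\mathcal{K}_1^\lambda(y,\sigma)|\lesssim_M(1+\mathsf{dist}((y,\sigma),\mathbf{\Lambda}_1^\lambda))^{-M}$ by \eqref{eq:KKK}. The decisive geometric input I would establish is that for $(\xxx,t)\in Q(z_D,r/C)$ — so $|\xxx-\xxx_D|\lesssim r/C$ and, crucially, $|t-t_D|\lesssim\lambda r/C$ — and $\xxx'$ in the above support, $\mathsf{dist}\bigl((\xxx-\xxx',t-t_D),\mathbf{\Lambda}_1^\lambda\bigr)\gtrsim|\xxx'-\xxx_D|+r$. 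Granting this together with $|g_D(\xxx')|\lesssim E(F^\lambda)^{1/2}$, choosing $M$ large (depending on $N,n,C$) yields
$$
\bigl|S^\lambda(t-t_D)[g_D(1-\indic_{D_-})](\xxx)\bigr|\lesssim_M E(F^\lambda)^{1/2}\int_{\R^{n+1}}\bigl(1+c(|w|+r)\bigr)^{-M}\,dw\lesssim r^{-N}E(F^\lambda)^{1/2},
$$
and adding the two contributions completes the proof; the statement for $G^\lambda$ is the same with $\mathbf{\Lambda}_1^\lambda$ replaced by $\mathbf{\Lambda}_2^\lambda$.

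The one point requiring care — and precisely the reason the cube is taken $\lambda$-stretched — is the geometric claim, which I would prove by the same contradiction argument already used in the proof of Lemma~\ref{lem:conctr}. Recall $\mathbf{\Lambda}_1^\lambda=\bigcup_{\ell\in\R}\{\ell(v,-\tfrac12|v|^2,1):v\in\Xi_1^\lambda\}$, and that every $v\in\Xi_1^\lambda$ satisfies $|v|\le K/\lambda$ for an absolute $K\lesssim1$, since $|\xi|\lesssim1$ and $s+\lambda\approx\lambda$ on $\supp a_1$. If some $P=\ell(v,-\tfrac12|v|^2,1)$ were within $c|\xxx-\xxx'|$ of $(\xxx-\xxx',t-t_D)$, matching last coordinates forces $|\ell|\lesssim\lambda r/C+c|\xxx-\xxx'|$, hence the spatial part of $P$ has norm $\lesssim Kr/C+c|\xxx-\xxx'|$; matching spatial parts and using $|\xxx-\xxx'|\ge\tfrac12|\xxx'-\xxx_D|\gtrsim r$ gives $|\xxx-\xxx'|\le(O(K/C)+O(c))|\xxx-\xxx'|$, a contradiction once $C$ is large and $c$ small, so $\mathsf{dist}((\xxx-\xxx',t-t_D),\mathbf{\Lambda}_1^\lambda)\ge c|\xxx-\xxx'|\gtrsim|\xxx'-\xxx_D|+r$. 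Incorporating the Schwartz tails of $\mathcal{K}_1^\lambda$, of $\indic_D*\mathbf{\Upsilon}_{r^{1-1/N}}$, and of $\Phi$ in the estimates above is routine dyadic bookkeeping with nothing sharp at stake, handled exactly as in \cite{Wolff,TaoMZ}; I expect no further obstruction.
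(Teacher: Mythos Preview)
Your proof is correct and follows essentially the same strategy as the paper: both write $(1-P_D)F^\lambda(t)=\mathcal{K}_1^\lambda(\cdot,t-t_D)*g_D$ and split the $\xxx'$-integration into a region near the center of $D$ and its complement, handling the latter by the same geometric kernel-decay argument (the key input being $|v|\lesssim\lambda^{-1}$ for $v\in\Xi_1^\lambda$ together with $|t-t_D|\lesssim\lambda r/C$). The only difference is the inner region: the paper splits at $\tfrac12 D$ and invokes the pointwise bound \eqref{eq:*-K**} on $1-\indic_D*\mathbf{\Upsilon}_{r^{1-1/N}}$ directly, whereas you split at $D_-$ and use the $L^2$ estimate \eqref{2.17} followed by a Bernstein step exploiting that $S^\lambda$ outputs functions with spatial Fourier support in $\supp a_1$; since \eqref{2.17} is itself a consequence of \eqref{eq:*-K**}, this is a minor repackaging rather than a different route.
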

\begin{proof}
We only show the first estimate \eqref{2.22'} by symmetry.
	Let $\mathscr{N}^\lambda_{\;j,t}(\xxx)$ be  given by Proposition \ref{pp:KKK}. Write
	$$
	(1-P_D) F^\lambda(t,\xxx)=\int \mathscr{N}^\lambda_{\;1,t-t_D}(\xxx-\xxx')\bigl[1-(\indic_D*\mathbf{ \Upsilon}_{r^{1-\frac{1}{N}}})(\xxx')\bigr]F^\lambda(t_D,\xxx')\,d\xxx'.
	$$
	For any $(\xxx,t)\in Q(z_D,r/C)$,  we may write
	\begin{align*}
		\xxx'-\xxx+(t-t_D)(v ,-|v|^2/2)=(\xxx'-\xxx_D)
-(\xxx-\xxx_D)+\bigl[(t-t_D)(v ,-|v|^2/2)\bigr],
	\end{align*}
	with $v\in 2\,\Xi_1^\lambda$. 	In view of \eqref{heart}, 	we may assume that $\xxx'\in \frac{1}{2}D$, otherwise, by using $|\xxx-\xxx_D|\le  C^{-1}r, |t-t_D|\le C^{-1}\lambda r$ and the rapid decay of the kernel $\mathscr{N}^\lambda_{\;1,t-t_\D}$ away from $\mathbf{ \Lambda}^{ \lambda}_1$, we obtain an upper bound  $\O (r^{-N}\EEE(F^{\lambda}))$.
	Next, by \eqref{eq:*-K**}
	$$
	\Bigl|1-\bigl(\indic_D*\mathbf{ \Upsilon}_{r^{1-\frac{1}{N}}}\bigr)(\xxx')\Bigr|
	\lesssim_M\; \bigl(1+ r^{-1+\frac{1}{N}}\mathsf{dist}\bigl(\xxx',\;D^\mathsf{ext}\bigr)\bigr)^{-M}\lesssim_M r^{-M/N},
	$$
 for all $\xxx'\in \frac{1}{2}D$.
	The proof is complete by taking $M$ large.
\end{proof}
\subsection{A non-endpoint bilinear estimate}
For $0<r_1<r_2<+\infty$,  we define the cubical annulus as
$$
Q^{\mathsf{ann}}(\xxx_Q,t_Q;r_1,r_2)=Q^\lambda(\xxx_Q,t_Q;r_2)\setminus Q^\lambda(\xxx_Q,t_Q;r_1).
$$
We show a non-endpoint bilinear estimate for localized blue or red waves, on a dyadic annulus. This corresponds to Lemma 11.1 of Tao \cite{TaoMZ}, which will be used to handle the  case when the energy is highly concentrated.
\begin{lemma}
	\label{lem:non-edpt}
	Let $2^{10 C_0}\le R\le \lambda$,
	$C_0\le r\le R^{\frac{1}{2}+\frac{4}{N}}$, and
	$D=D(z_D,\,r)$ with $z_D=(\xxx_D,t_D)$. Then, there exists $b>0$, depending only on $Z$ and $n$, such that for any  red and blue waves $F^\lambda,G^\lambda$ with  $\EEE(F^\lambda)=\EEE(G^\lambda)=1$, we have
	\begin{equation}
	\|(P_D F^\lambda)G^\lambda\|_{Z(Q^\mathsf{ann}(z_D; R,2R))},	\| F^\lambda\, (P_DG^\lambda)\|_{Z(Q^\mathsf{ann}(z_D; R,2R))}\lesssim 2^{O(C_0)}\lambda^{\frac{1}{q}} R^{-b}.
	\end{equation}
\end{lemma}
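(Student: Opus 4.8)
The plan is to adapt the proof of Lemma~11.1 in \cite{TaoMZ} to the $S^\lambda$-framework. By the symmetry of the two inequalities it suffices to treat $\|(P_D F^\lambda)G^\lambda\|_{Z(Q^{\mathsf{ann}}(z_D;R,2R))}$. I would begin with a \emph{Huygens reduction}: by Lemma~\ref{lem:conctr}, up to a tail of size $\O\bigl((C_0 r)^{-N}\bigr)$ in $L^\infty$, the wave $P_D F^\lambda$ is supported on the conic neighbourhood $\mathbf N:=\mathbf\Lambda_1^\lambda\bigl(z_D,(C_0r)^{1+1/N}\bigr)$; since $\|G^\lambda(t)\|_{L^\infty_x}\lesssim1$ (Bernstein, the frequency support being of measure $\O(1)$) and $\mathsf{vol}(Q^{\mathsf{ann}})\lesssim\lambda R^{n+2}$, the part of the $Z$-norm coming from the complement of $\mathbf N$ is $\lesssim(C_0 r)^{-N/2}\lambda^{1/q}\ll\lambda^{1/q}R^{-1}$, hence acceptable. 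So everything reduces to the contribution of $\mathbf N\cap Q^{\mathsf{ann}}$.

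The geometric input is that $\mathbf\Lambda_1^\lambda$ is a \emph{cone-like}, codimension-one set whose generating directions $\Xi_1^\lambda$ have diameter $\O(\lambda^{-1})$; consequently $\mathbf\Lambda_1^\lambda(z_D,\cdot)$ meets $Q^{\mathsf{ann}}(z_D;R,2R)$ only where $|t-t_D|\sim\lambda R$, and there its $(C_0 r)^{1+1/N}$-neighbourhood occupies, at each time slice, a fraction $\lesssim2^{\O(C_0)}r/R\le2^{\O(C_0)}R^{-1/2+4/N}$ of the spatial annulus (equivalently: tiling $Q^{\mathsf{ann}}$ by $\lambda$-stretched cubes of side $C_0 r$, only a fraction $\lesssim2^{\O(C_0)}r/R$ of them meet $\mathbf N$). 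On $\mathbf N\cap Q^{\mathsf{ann}}$ I would use two competing bounds for $u:=(P_D F^\lambda)G^\lambda$. First, since $\mathbf N$ lies in a \emph{red} conic neighbourhood while $G^\lambda$ is blue, the opposite-colour energy estimate of Lemma~\ref{lem:opposite} gives $\|G^\lambda\|_{L^2(\mathbf N)}\lesssim2^{\O(C_0)}(\lambda r)^{1/2}R^{\O(1/N)}$, and hence, combined with $\|P_D F^\lambda(t)\|_{L^\infty_x}\lesssim1$ and the wavepacket decomposition of Lemma~\ref{lem:wp-d},
\[
\|u\|_{L^2_{t,\xxx}(Q^{\mathsf{ann}})}\;\lesssim\;2^{\O(C_0)}(\lambda r)^{1/2}R^{\O(1/N)}.
\]
Second, the non-endline bilinear restriction estimate \eqref{eq:B-parab}, applied at an exponent pair $(q,\tilde s)$ lying \emph{strictly inside} the Lee--Vargas range $\tfrac1q<\min\{\tfrac{n+1}4,\tfrac{n+1}2(1-\tfrac1{\tilde s})\}$ with $\tilde s\ge r_c^-$ — such a pair exists precisely because $(q,s)=(q_c^+,r_c^-)$ lies strictly to the right of the critical index on $\mathbf\Gamma$ — transfers to the $S^\lambda$-setting through the descent/rescaling relating $S^\lambda$ to the paraboloid extension, which accounts for the factor $\lambda^{1/q}$, and yields the scale-free bound
\[
\|u\|_{L^q_tL^{\tilde s}_x(\R^{n+2})}\;\lesssim\;\lambda^{1/q}.
\]

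Finally I would interpolate these two estimates — together with the elementary energy bound $\|u\|_{L^\infty_tL^1_x}\le1$ — against one another in the mixed-norm scale, using the vector-valued/duality device of \cite{LeeVargas,Temur}, to obtain a bound of the form $\|u\|_{Z(\mathbf N\cap Q^{\mathsf{ann}})}\lesssim2^{\O(C_0)}\lambda^{1/q}(r/R)^{\kappa_0}\le2^{\O(C_0)}\lambda^{1/q}R^{-\kappa_0/2+\O(1/N)}$ for a fixed $\kappa_0=\kappa_0(q,s)>0$; taking $N$ large and $b=\kappa_0/2-\O(1/N)>0$ then closes the argument. The hard part is exactly this last bookkeeping step: the geometric gain $(r/R)^{\kappa_0}$ is thin, and one must ensure that the losses incurred when converting the $L^2_{t,\xxx}$- and $L^q_tL^{\tilde s}_x$-bounds into the target norm $L^q_tL^{r_c^-}_x$ — notably the Hölder loss of a power of the (large) spatial volume of a slice of $Q^{\mathsf{ann}}$, and the loss of a power of $R$ in passing from $L^2_t$ to $L^q_t$ with $q<2$ — do not swamp it. This is workable only because we stay away from the critical index (so that $\kappa_0$ may degenerate as $(q,s)\to(q_c,r_c)$ yet is strictly positive on $\mathbf\Gamma\setminus\{(q_c,r_c)\}$) and because the hypothesis $r\le R^{1/2+4/N}$ is exactly what keeps $(r/R)^{\kappa_0}$ a genuine power gain — the same delicate balance as in Lemma~11.1 of \cite{TaoMZ}, now on $\lambda$-stretched cubes and in the mixed norms.
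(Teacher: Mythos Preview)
Your Huygens reduction to $\mathbf N\cap Q^{\mathsf{ann}}$ is correct and matches the paper. The gap is in the $L^2$ bound. You obtain
\[
\|u\|_{L^2_{t,\xxx}(Q^{\mathsf{ann}})}\lesssim(\lambda r)^{1/2}
\]
by pairing the trivial bound $\|P_DF^\lambda\|_{L^\infty}\lesssim1$ with the opposite-colour estimate on $G^\lambda$. Using $r\le R^{1/2+4/N}$ this is only $\lambda^{1/2}R^{1/4+\O(1/N)}$, and it is too weak to close any interpolation towards $Z$. Concretely: interpolating your three inputs $(L^2_{t,\xxx},\ L^q_tL^{\tilde s}_\xxx,\ L^\infty_tL^1_\xxx)$ with weights $(\alpha,\beta,\gamma)$ subject to the constraint that the first exponent stays at $1/q$ forces $\alpha/2+\beta/q=1/q$, hence the resulting bound is exactly $\lambda^{1/q}r^{\alpha/2}$ --- a \emph{loss}, never a gain. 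Likewise, H\"older from $L^{\tilde s}_\xxx$ down to $L^s_\xxx$ on the thin support costs a \emph{positive} power of the slice volume $\sim R^n r$, so the fraction $r/R$ you isolate never surfaces as a saving. The ``$(r/R)^{\kappa_0}$'' you claim does not come from the ingredients you list.

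What the paper actually does for the $L^2$ estimate is different and carries the whole lemma: it wave-packet decomposes $P_DF^\lambda$ at scale $R^{1/2}$ and uses the \emph{bush geometry} --- all red tubes $T_1$ that contribute must pass within $R^{1/2+\O(1/N)}$ of the origin, hence on $Q^{\mathsf{ann}}(0;R,2R)$ they overlap with multiplicity at most $R^{\O(1/N)}$. Applying the opposite-colour estimate to each wave packet against $G^\lambda$ (so $\|\tilde\phi_{\vvv,\mu}G^\lambda\|_{L^2}^2\lesssim\lambda R^{1/2}$) and summing with the $\ell^2$ bound \eqref{eq:L-2-sum} on the coefficients yields
\[
\|u\|_{L^2_{t,\xxx}(Q^{\mathsf{ann}})}\lesssim 2^{\O(C_0)}\lambda^{1/2}R^{-\frac{n-1}{4}+\O(1/N)},
\]
which beats your bound by $R^{n/4}$. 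Interpolating this with the $L^1$ estimate $\|u\|_{L^1_{t,\xxx}}\lesssim\lambda(rR)^{1/2}\le\lambda R^{3/4+\O(1/N)}$ (Cauchy--Schwarz plus Lemma~\ref{lem:opposite}, which you essentially have) then gives the $Z$-norm bound with a genuine negative power of $R$; the non-endline Lee--Vargas input is not needed at all. So the fix is not bookkeeping: you are missing the bush/near-disjointness argument that produces the strong $L^2$ bound.
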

\begin{proof}
	We only show the estimate for $(P_D F^\lambda)G^\lambda$. By translation and modulation,  we may take $(\xxx_D,t_D)=(0,0)$ without loss of generality.
	
	By interpolation and taking $N$ large enough, it suffices to show (see Section \ref{sec:pre+top})
\begin{equation}
\label{eq:oooppp}
	\|(P_D F^\lambda)G^\lambda\|_{L^1(Q^\mathsf{ann}(0,0;R,2R))}\lesssim  \lambda R^{\frac{3}{4}+\frac{2}{N}},
	\end{equation}
	\begin{equation}
	\label{eq:ooopppqqq}
	\|(P_D F^\lambda)G^\lambda\|_{L^2(Q^\mathsf{ann}(0,0;R,2R))}\lesssim 2^{O(C_0)} \lambda^{1/2} R^{-\frac{n-1}{4}} R^{C/N}.
\end{equation}
We may focus on
$\Omega_{r,R}^\lambda:=
	Q^\mathsf{ann}(0,0;R,2R)\cap \mathbf{ \Lambda}^\lambda_1(0,0; Cr+R^{\frac{1}{N}})
	$ with $C\gg 1$ fixed,
	since by  Lemma \ref{lem:conctr},  $(P_D F^\lambda)$ is bounded by  $\O(R^{-N})$ outside $ \mathbf{ \Lambda}^\lambda_1(0,0; Cr+R^{\frac{1}{N}})$.\smallskip
	
	To get the $L^1(\Omega_{r,R}^\lambda)$ estimate,
	we have by using Lemma \ref{lem:opposite} and energy estimate
	$$
	\|(P_D F^\lambda) G^\lambda\|_{L^1(\Omega_{r,R}^\lambda)}
	\lesssim \lambda ((r+R^{\frac{1}{N}})R)^{\frac{1}{2}}
	\lesssim \lambda R^{\frac{3}{4}+\frac{2}{N}}.
	$$
	
	To get the $L^2-$estimate \eqref{eq:ooopppqqq} on $\Omega_{r,R}^\lambda$, we use the wave packet decomposition for $P_D F^\lambda$ Lemma \ref{lem:wp-d}  in Section \ref{sec:pre+top} with $\varrho=R^{1/2}$ and $\varpi=2^{-O(C_0)}$
	$$
	P_D F^\lambda=\sum_{T_1\in\mathbf{T}_1} \bigl(P_D F^\lambda\bigr)_{T_1}.
	$$
	Using the rapid decay property of the wave packets away from plates $\mathfrak{ V}_{T_1 }^\lambda\subset T_1$ (see \eqref{eq:decay-wp-0}), we have
$$\|(P_D F^\lambda)G^\lambda\|_{L^2(\Omega_{r,R}^\lambda)}
	\lesssim\biggl\|\sum_{\substack{T_1\in\mathbf{T}_1
			\\
			\mathsf{dist}(T_1,(0,0))\lesssim R^{\frac{1}{2}+\frac{50}{N}}}}(P_D F^\lambda)_{T_1}G^\lambda\,\biggr\|_{L^2(Q^\mathsf{ann}(0,0;R,2R))}+R^{-N}.
	$$
	Note that the directions of the tubes $T_1$ are $\O(\lambda^{-1}R^{-1/2})$-separated. By crude estimates,  the multiplicities of the tubes, that are $\O(R^{1/2+50/N})-$close to the origin, over $Q^\mathsf{ann}(0,0;R,2R)$ is bounded by $\O(R^{C/N})$ with $C$ depending only on $n$.
	By Cauchy-Schwarz, we have
	$$
	\biggl\|\sum_{\substack{T_1\in\mathbf{T}_1
			\\
			\mathsf{dist}(T_1,(0,0))\lesssim R^{\frac{1}{2}+\frac{1}{N}}}}(P_D F^\lambda)_{T_1}G^\lambda\,\biggr\|_{L^2(Q^\mathsf{ann}(0,0;R,2R))}\lesssim R^{\frac{C}{N}}
\Bigl(	\sum_{T_1\in\mathbf{T}_1}
\bigl\|(P_D F^\lambda)_{T_1}G^\lambda\bigr\|_{L^2}^2\Bigr)^{\frac{1}{2}}.
	$$
	By the same partition of tubes into plates using  \eqref{eq:wpd} as we did in Section \ref{sec:pre+top},  where $S^\lambda(t)f$ is replaced by $P_D F^\lambda$, we have
	$$(P_D F^\lambda)_{T_1}=\sum_{k:\,\mathfrak{ V}^\lambda_{\vvv,\mu_{T_1},k}\subset T_1}\tilde{c}_{\vvv,\mu_{T_1},k}\,\tilde{\phi}_{\vvv,\,\mu_{T_1},\,k}+\sum_{k:\,\mathfrak{ V}^\lambda_{\vvv,\mu_{T_1},k}\not\subset T_1}\tilde{c}_{\vvv,\mu_{T_1},k}\,\tilde{\phi}_{\vvv,\,\mu_{T_1},\,k}$$
	and by Cauchy-Schwarz
	\begin{align*}
	\bigl\|(P_D F^\lambda)_{T_1}G^\lambda\bigr\|_{L^2}^2\lesssim
	\sum_{k}
	\tilde{c}_{\vvv,\mu_{T_1},k}^2	\bigl\|\tilde{\phi}_{\vvv,\mu_{T_1},k}G^\lambda\bigr\|_{L^2}^2\,.
	\end{align*}
	Here, we use $\tilde{c}$ and $\tilde{\phi}$ to stress that they are the coefficients and wave packets for $P_DF^\lambda$.
	Noting that $\tilde{\phi}_{\vvv,\mu_{T_1}}$ is essentially  concentrated in  an $\O(R^{1/2})-$neighbourhood of $\mathbf{ \Lambda}_1^\lambda$ satisfying the same formula \eqref{eq:decay-wp-0}, we have
	$$
	\bigl\|\tilde{\phi}_{\vvv,\mu_{T_1},k}G^\lambda\bigr\|_{L^2}^2\lesssim \varpi^{-O(1)} \lambda R^{1/2}
	$$
	 by using Lemma \ref{lem:opposite} and standard dyadic decomposition to incorporate Schwartz tails.
	 The coefficients $(\tilde{c}_{\vvv,\mu,k})_{\vvv,\mu,k}$ also obeying the $\ell^2-$summation formula \eqref{eq:L-2-sum},
	summing over $\vvv,\mu,k$, we obtain
	 \eqref{eq:ooopppqqq}.
	The proof is complete.
\end{proof}

\begin{remark}
	It is in this lemma that we need take $N$ depending on $Z$, whereas in \cite{TaoMZ}, there is no need to do so.
\end{remark}

\section{Explore the energy concentration}\label{sec:wt}

We shall use the method of induction on scales  in \cite{TaoMZ}. To this end, we  consider a subclass of the red and blue waves by imposing a margin condition.

Let $$\varSigma^\lambda=\Bigl\{(\xi,s,\tau): \tau=-\frac{|\xi|^2}{2(\lambda+s)}\Bigr\}$$ and for $j=1,2$
$$
\varSigma_j^\lambda=\Bigl\{(\xi,s,\tau): \tau=-\frac{|\xi|^2}{2(\lambda+s)},\,(\xi,s)\in \widetilde{V}_j\times I\Bigr\}.
$$
We shall say $\varSigma_j^\lambda$ is the \emph{lift} of $\widetilde{V}_j\times I$ to $\varSigma^\lambda$.

For any $R\ge 2^{C_0}$, we say $F^\lambda(t,\xxx)$ is a $\mathfrak{R}^{\lambda}_R$-\emph{wave} if it is a red wave with the spacetime Fourier transform $\wh{F}(\xi,s,\tau)$  being an $L^2$ measure on $\varSigma_1^\lambda$ and satisfying the margin condition
$$\mathsf{marg}(F^\lambda):=\mathsf{dist}\bigl(\supp(\wh{F^\lambda}),\varSigma^\lambda\setminus \varSigma_1^\lambda\bigr)\ge (100n)^{-1}-R^{-\frac{1}{N}}.$$
Similarly, we define  $\mathfrak{B}^\lambda_R$ to be  the subset of  blue waves of  functions  $G^\lambda$ such that $\supp\; \wh{G^\lambda}\subset \varSigma_2^\lambda$ and satisfying the margin condition
$$\mathsf{marg}(G^\lambda):=\mathsf{dist}\bigl(\supp(\wh{G^\lambda}),\varSigma^\lambda\setminus \varSigma_2^\lambda\bigr)\ge (100n)^{-1}-R^{-\frac{1}{N}}.$$

\begin{definition}
	\label{def-A}
	Fix $\lambda\ge 2^{10C_0}$.
	For any $R\in [2^{10 C_0}, \lambda]$, fix $Q_R^\lambda\subset\R^{n+2}$ a $\lambda-$stretched  spacetime cube of size $R$. Let $A^\lambda(R)$ be the optimal constant $C$ such that \begin{equation}
	\label{eq:bilinar-wolff}
	\| F^\lambda G^\lambda\|_{Z(Q^\lambda_R)}\le C	\lambda^{1/q}\,\EEE(F^\lambda)^{1/2}\EEE(G^\lambda)^{1/2},
	\end{equation}
	 holds for all  $F^\lambda\in\mathfrak{R}^\lambda_R, G^\lambda\in \mathfrak{B}^\lambda_R$ and all $Q^\lambda_R$\,.
\end{definition}
 By translation   in the physical spacetime and  modulating the frequency variables resp., $A^\lambda(R)$ is independent of the  particular choice of $Q_R^\lambda$. \smallskip

To show Theorem \ref{thm:main}, we shall show that there is a fixed constant $C_*$ depending only on $n,\eps$, $\widetilde{V}_1, \widetilde{V}_2$ and the $(q,s)$ exponent in $Z-$norm  taken sufficiently close to the critical index $(q_c,r_c)$, such that $A^\lambda(R)\le C_*$ holds for all $R\le \lambda$ and all $\lambda\ge 2^{10C_0}$. We may assume $A^\lambda(R)\ge 1$.
 We set for any $2^{C_0}\le R\le\lambda$ $$\displaystyle\overline{A}^{\lambda}(R)=\sup_{2^{C_0}\le\lambda'\le\lambda}\;\sup_{2^{C_0}\le R'\le \min(R,\lambda')}A^{\lambda'}(R')$$  for a technical issue.\smallskip

We
 need to introduce an auxiliary quantity, a crucial innovation made in \cite{TaoMZ}.
 We call $\mathsf{LS}(Q^\lambda):=[t_{Q^\lambda}-\lambda \frac{r_{Q^\lambda}}{2},\,t_{Q^\lambda}+\lambda \frac{r_{Q^\lambda}}{2}]$ the \emph{lifespan} of $Q^\lambda=Q^\lambda(\xxx_{Q^\lambda},t_{Q^\lambda};r_{Q^\lambda})$. In particular, the length of $\mathsf{LS}(Q^\lambda_R)$ equals to $\lambda R$.
\begin{definition}
	\label{def:AAA}
	For any $R\ge 2^{10C_0}$ and  $r,r'>0$, we define
	$\mathscr{A}^{\lambda}(R,r,r')$ to be the optimal constant $C$ such that
	$$
	\bigl\|F^\lambda G^\lambda\bigr\|_{Z(Q^\lambda_R\cap\, \mathcal{C}^\lambda(z_0,r'))}
	\le C\lambda^{1/q} \bigl(\EEE(F^\lambda)\EEE(G^\lambda)\bigr)^{\frac{1}{2q}}\; \EEE_{r,C_0Q^\lambda_R}(F^\lambda,G^\lambda)^{\frac{1}{q'}},
	$$
	holds for all  $F^\lambda\in\mathfrak{R}^\lambda_R,G^\lambda\in\mathfrak{B}^\lambda_R$  and all cubes $Q_R^\lambda$ being as in Definition \ref{def-A} and all $z_0=(\xxx_0,t_0)\in\R^{n+2}$.
	Recall that $\mathcal{ C}^\lambda(z_0,r')=\mathbf{ \Lambda}^\lambda_1(z_0,\,r')\cup \mathbf{ \Lambda}^\lambda_2(z_0,\,r')$, and $q$ is given by the $Z-$norm, $q'=\frac{q}{q-1}$. Here,  $\EEE_{r,Q^\lambda_R}$ is the \emph{energy concentration} defined in the same way as  \cite{TaoMZ} by letting
	$$
	\EEE_{r,Q^\lambda_R}(F^\lambda,G^\lambda)=\max\Bigl\{\frac{1}{2}\EEE(F^\lambda)^{1/2}\EEE(G^\lambda)^{1/2},\; \sup_D\|F^\lambda\|_{L^2(D)}\|G^\lambda\|_{L^2(D)}\Bigr\},
	$$
	where $D$ ranges over all disks of radius $r$ with the time coordinate $t_D\in\mathsf{LS}(Q^\lambda_R)$.
\end{definition}
We remark here that it is $C_0Q^\lambda_R$ rather than $Q^\lambda_R$ in the definition of $\mathscr{A}^\lambda$ in order to cover the structural constants  by taking $C_0$ large.

\subsection{Persistence of the non-concentration of energy}
The following result will be used to handle the energy-concentrated case.

	\begin{proposition}
	\label{pp:persist}
	Let $ 2^{10C_0}\le R\le \lambda$  and $Q=Q^\lambda_R$ be a $\lambda-$stretched spacetime cube of size $R$. For each $r\ge R^{1/2+1/N}$, we define $r^\#:=r(1-C_0r^{-\frac{1}{3N}})$.
	There exists a constant $C>0$, such that  if $F^\lambda\in\mathfrak{R}^\lambda_R, G^\lambda\in\mathfrak{B}^\lambda_R$
	with $\EEE(F^\lambda)=\EEE(G^\lambda)=1$
	and $\mathcal{F}^\lambda$, $\mathcal{G}^\lambda$
	are $(\lambda,\varpi,R^{1/2})$-wave tables for $F^\lambda, G^\lambda$ over $Q$, then
	\begin{align*}
	\sup_{\varDelta\in\mathcal{Q}_{C_0}(Q)}
	\EEE_{r^\#,5Q}\bigl(\mathcal{F}^{\lambda,\varDelta},\mathcal{G}^{\lambda,\varDelta}\bigr)
	\le (1+C\varpi)\,\EEE_{r,5Q}(F^\lambda,G^\lambda)+\mathcal{O}\bigl(\varpi^{-O(1)}R^{-N/2}\bigr)
	\end{align*}
	holds for all $\varpi \in (0,2^{-C_0})$ and all $Q$.
\end{proposition}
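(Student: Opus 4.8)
The plan is to adapt the persistence-of-non-concentration argument of \cite{TaoMZ} to the $\lambda$-stretched setting; the only genuinely new point is to check that the physical-space localization $P_D$ of Section~\ref{sect:huygens} is compatible with the wave-packet reweighting defining the wave table.

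\emph{Reduction.} Since $E_{r^\#,CQ}(\mathcal{F}^{\lambda,\varDelta},\mathcal{G}^{\lambda,\varDelta})$ is the larger of $\tfrac12 E(\mathcal{F}^{\lambda,\varDelta})^{1/2}E(\mathcal{G}^{\lambda,\varDelta})^{1/2}$ and $\sup_{D^\#}\|\mathcal{F}^{\lambda,\varDelta}\|_{L^2(D^\#)}\|\mathcal{G}^{\lambda,\varDelta}\|_{L^2(D^\#)}$, and Lemma~\ref{lem:bessel-w-t} gives $E(\mathcal{F}^{\lambda,\varDelta}),E(\mathcal{G}^{\lambda,\varDelta})\le(1+C_n\varpi)^2$, the first quantity is at most $(1+C\varpi)\tfrac12\le(1+C\varpi)E_{r,CQ}(F^\lambda,G^\lambda)$, as $E_{r,CQ}(F^\lambda,G^\lambda)\ge\tfrac12$. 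For the second, I fix a disk $D^\#=D(z_0;r^\#)$, $z_0=(\xxx_0,t_0)$, with $t_0\in\mathsf{LS}(C_0 C Q)$, and let $D=D(z_0;r)$ be the concentric disk of radius $r$, which is admissible for $E_{r,CQ}(F^\lambda,G^\lambda)$ since it has the same time coordinate. Multiplying out and using $\|F^\lambda\|_{L^2(D)},\|\mathcal{F}^{\lambda,\varDelta}(\cdot,t_0)\|_{L^2(\R^{n+1})}\lesssim1$ together with $E_{r,CQ}(F^\lambda,G^\lambda)\ge\|F^\lambda\|_{L^2(D)}\|G^\lambda\|_{L^2(D)}$, it suffices to prove for one colour
\[
\|\mathcal{F}^{\lambda,\varDelta}\|_{L^2(D^\#)}\le(1+C\varpi)\,\|F^\lambda\|_{L^2(D)}+\mathcal{O}\bigl(\varpi^{-O(1)}R^{-N/2}\bigr),
\]
the cross terms and the contribution of the other colour being of the same admissible size.

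\emph{Localization.} Introduce the intermediate disk $D^\flat=D(z_0;r^\flat)$ with $r^\flat=r(1-\tfrac12C_0r^{-1/(2N)})$; since $C_0\ge2$, a direct comparison of radii shows $D^\#\subset(D^\flat)_-$ and $(D^\flat)_+\subset D$, the half-gap $\tfrac12C_0r^{1-1/(2N)}$ absorbing one $P$-margin loss on each side. By \eqref{2.17} for $D^\flat$ and $D^\#\subset(D^\flat)_-$, working on the time slice $t=t_0$,
\[
\|\mathcal{F}^{\lambda,\varDelta}\|_{L^2(D^\#)}\le\|P_{D^\flat}\mathcal{F}^{\lambda,\varDelta}(\cdot,t_0)\|_{L^2(\R^{n+1})}+\mathcal{O}(r^{-N}).
\]
Now I claim that $P_{D^\flat}\mathcal{F}^{\lambda,\varDelta}$ is, up to an $L^2(\R^{n+1})$-error of size $\mathcal{O}(\varpi^{-O(1)}r^{-N})$ at time $t_0$, the $\varDelta$-component $\mathcal{H}^{\lambda,\varDelta}$ of the $(\lambda,\varpi,R^{1/2})$-wave table of the localized red wave $H^\lambda:=P_{D^\flat}F^\lambda$, taken with respect to $G^\lambda$ over $Q$. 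Indeed, writing $\mathcal{F}^{\lambda,\varDelta}=\sum_{T_1\in\mathbf{T}_1}w_{T_1}F^\lambda_{T_1}$ with $w_{T_1}=m_{T_1}^\varDelta/m_{T_1}\in[0,1]$, the weights $w_{T_1}$ depend only on $G^\lambda$, the geometry and $\varDelta$, hence are the \emph{same} for $F^\lambda$ and for $H^\lambda$; and $P_{D^\flat}$, being multiplication by $\indic_{D^\flat}\ast\mathbf{\Upsilon}_{(r^\flat)^{1-1/N}}$, varies on the scale $(r^\flat)^{1-1/N}\gg R^{1/2}$ — this is exactly where the hypothesis $r\ge R^{1/2+3/N}$ enters — so it is essentially constant on each concentration plate $\mathfrak{V}^\lambda_{\vvv,\mu}$. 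By \eqref{eq:decay-wp-0} and \eqref{eq:L-2-sum} this lets $P_{D^\flat}$ commute with the tube localizations $F^\lambda\mapsto F^\lambda_{T_1}$ up to the asserted error, i.e.\ $P_{D^\flat}F^\lambda_{T_1}=H^\lambda_{T_1}+(\text{error})$, and summing against $w_{T_1}$ yields $P_{D^\flat}\mathcal{F}^{\lambda,\varDelta}=\mathcal{H}^{\lambda,\varDelta}+(\text{error})$.

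\emph{Conclusion.} It remains to bound $\|\mathcal{H}^{\lambda,\varDelta}(\cdot,t_0)\|_{L^2}$. In the wave-packet decomposition (Lemma~\ref{lem:wp-d}, $\varrho=R^{1/2}$) of $H^\lambda$, one has $\mathcal{H}^{\lambda,\varDelta}=\sum_{(\vvv,\mu)}w_{T_1(\vvv,\mu)}\tilde c_{\vvv,\mu}\tilde\phi_{\vvv,\mu}$, and $\{w_{T_1(\vvv,\mu)}\}\subset[0,1]$ trivially satisfies the admissibility hypothesis of \eqref{eq:bessel} with a single index, so that inequality gives $\sup_t\|\mathcal{H}^{\lambda,\varDelta}(\cdot,t)\|_{L^2}\le(1+C_n\varpi)E(H^\lambda)^{1/2}$. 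Finally \eqref{2.18} for $D^\flat$ at time $t_0$ together with $(D^\flat)_+\subset D$ yields $E(H^\lambda)^{1/2}=\|P_{D^\flat}F^\lambda(\cdot,t_0)\|_{L^2}\le\|F^\lambda\|_{L^2(D)}+\mathcal{O}(r^{-N})$. Chaining the three displays and using $r\ge R^{1/2+3/N}$, hence $r^{-N}\le R^{-N/2}$, proves the displayed estimate (a fortiori its error is $\le\mathcal{O}(\varpi^{-O(1)}+R^{-N/2})$); multiplying the two colours and combining with the first paragraph completes the proof. The step I expect to be the main obstacle is the commutation claim in the \emph{Localization} paragraph: one must genuinely exploit the scale separation $(r^\flat)^{1-1/N}\gg R^{1/2}$ to treat $P_{D^\flat}$ as plate-wise constant, control carefully — via \eqref{eq:decay-wp-0} and \eqref{eq:L-2-sum} — the Schwartz tails accumulated over all wave packets (in particular those straddling $\partial D^\flat$), and arrange the radii so that every $\mathcal{O}(r^{1-1/(2N)})$ margin or localization loss fits inside the gap $r-r^\#=C_0r^{1-1/(2N)}$, the factor $C_0$ in the definition of $r^\#$ supplying exactly this slack.
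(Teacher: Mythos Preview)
Your proposal is correct and follows the same strategy as the paper: reduce to a single-colour disk estimate, introduce the intermediate disk $D^\flat$ with $r^\flat=r(1-\tfrac{C_0}{2}r^{-1/(2N)})$ so that $D^\#\subset(D^\flat)_-\subset D^\flat\subset(D^\flat)_+\subset D$, handle the exterior piece via \eqref{2.17} applied to the red wave $\mathcal{F}^{\lambda,\varDelta}$, and bound the interior piece by Bessel \eqref{eq:bessel} together with \eqref{2.18}. The only organisational difference is at the interior step. The paper writes $\mathcal{F}^{\lambda,\varDelta}=\mathcal{F}^{\lambda,\varDelta}_{\mathsf{int}}+\mathcal{F}^{\lambda,\varDelta}_{\mathsf{ext}}$ with $\mathcal{F}^{\lambda,\varDelta}_{\mathsf{int}}=\sum_{T_1}\frac{m_{T_1}^\varDelta}{m_{T_1}}P_{D^\flat}F^\lambda_{T_1}$ and then, under the phrase ``by linearity of $P_{D^\flat}$, applying Lemma~\ref{lem:bessel-w-t}'', obtains $\|\mathcal{F}^{\lambda,\varDelta}_{\mathsf{int}}(t_{D^\#})\|_{L^2}^2\le(1+C\varpi)\|P_{D^\flat}F^\lambda(t_{D^\#})\|_{L^2}^2$ directly. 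You instead make explicit the commutation $P_{D^\flat}F^\lambda_{T_1}\approx(P_{D^\flat}F^\lambda)_{T_1}$ by constructing the auxiliary wave table $\mathcal{H}^{\lambda,\varDelta}$ of $H^\lambda=P_{D^\flat}F^\lambda$ and then applying \eqref{eq:bessel} to $H^\lambda$. That commutation---resting on the scale separation $(r^\flat)^{1-1/N}\gg R^{1/2}$ forced by $r\ge R^{1/2+3/N}$---is precisely what the paper's terse phrase hides (following \cite{TaoMZ}), so the ``main obstacle'' you flag is real but is exactly the step the paper takes for granted. Your route through $\mathcal{H}^{\lambda,\varDelta}$ is a slight detour: one could equally argue, as the paper implicitly does, that the proof of \eqref{eq:bessel} goes through when each $f_{\vvv,\mu}$ is multiplied by the slowly varying $\phi_\flat=\indic_{D^\flat}\ast\mathbf{\Upsilon}_{(r^\flat)^{1-1/N}}$, since the resulting frequency spread is $\ll\varpi^2 R^{-1/2}$ and leaves the $\mu$-orthogonality intact. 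Both formulations encode the same content.
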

\begin{proof}
	The argument is the same to \cite{TaoMZ} and we only sketch it.
	By \eqref{eq:bessel},
	it suffices to show there is a universal constant $C$ such that
	\begin{multline*}
	\sup_{\triangle\in\mathcal{Q}_{C_0}(Q)}
\|\mathcal{F}^{\lambda,\varDelta}\|_{L^2(D(z,r^\#))}\|\mathcal{G}^{\lambda,\varDelta}\|_{L^2(D(z,r^\#))}\\
	\le (1+C\varpi)\,\|F^\lambda\|_{L^2(D(z,r))}\|G^\lambda\|_{L^2(D(z,r))}+\mathcal{O}(r^{-100N})
	\end{multline*}
	holds for all $\varpi \in (0,2^{-C_0})$ and all  $z,Q$.

	Following the proof of (56) in \cite{TaoMZ}, we let
	\begin{align*}
	D=D(z,r),\;
	D^\#=&D(z,{r}^\#),\;D^\flat=D(z,{r}^\flat),\;\;r^\flat=r\Bigl(1-\frac{C_0}{2}{r}^{-\frac{1}{3N}}\Bigr).
	\end{align*}
	Then, we have
	$D^\#\,\subsetneqq\,D^\flat_{-}\,\subsetneqq D^\flat\,\subsetneqq\,D_{+}^\flat\,\subsetneqq \,D$ by taking $C_0$ sufficiently large.\smallskip

	We only deal with $\mathcal{F}^\lambda$ and the same arguments works for $\mathcal{G}^\lambda$.
	Write
	\begin{align*}
	\mathcal{F}_{\mathsf{int}}^{\lambda,\,\varDelta}(t)=&\,S^\lambda(t-t_{D^\#})\sum_{T_1\in\mathbf{T}_1}\frac{m_{T_1}^{\varDelta}}{m_{T_1}}\,\bigl(P_{D^\flat}\,F^\lambda\bigr)_{T_{1}}(t_{D^\#}),\\
		\mathcal{F}_{\mathsf{ext}}^{\lambda,\,\varDelta}(t)=&\,S^\lambda(t-t_{D^\#})\sum_{T_1\in\mathbf{T}_1}\frac{m_{T_1}^{\varDelta}}{m_{T_1}}\,\bigl((1-P_{D^\flat})\,F^\lambda\bigr)_{T_{1}}(t_{D^\#}).
	\end{align*}
	Then
	\begin{align*}
	\mathcal{F}^{\lambda,\varDelta}(t)=\;\;	\mathcal{F}_{\mathsf{int}}^{\lambda,\,\varDelta}(t)+\	\mathcal{F}_{\mathsf{ext}}^{\lambda,\,\varDelta}(t).
	\end{align*}
	By linearity of $P_{D^\flat}$,
	applying Lemma \ref{lem:bessel-w-t}  and then using \eqref{2.18}, we get
	\begin{multline*}
	\Bigl\|\mathcal{F}_{\mathsf{int}}^{\lambda,\varDelta}(t_{D^\#})\Bigr\|_{L^2(\R^{n+1})}^2
	\le\; (1+C\varpi) \|P_{D^\flat}F^\lambda(t_{D^\#})\|_{L^2(\R^{n+1})}^2\\
	\le\;(1+C\varpi)\|F^\lambda\|_{L^2(D^\flat_{+})}^2+\O(r^{-N})\le (1+C\varpi) \|F^\lambda\|_{L^2(D)}^2+\O(r^{-N})
	\end{multline*}
	where we used $D_{+}^\flat\subset D$ in the last estimate.\smallskip

	On the other hand, using the fast decay of wavepackets away from $CQ$, we have
	\begin{multline*}
	\;\Bigl\|\,\sum_{T_1}\frac{m_{T_1}^\varDelta}{m_{T_1}} \bigl((1-P_{D^\flat})F^\lambda\bigr)_{T_1}(t_{D^\#})\Bigr\|_{L^2(D^\#)}^2\\
	\le\;\Bigl\|\,\sum_{T_1 :\,T_1\cap CQ\neq\emptyset}\frac{m_{T_1}^{\varDelta}}{m_{T_1}} \bigl((1-P_{D^\flat})F^\lambda\bigr)_{T_{1}}(t_{D^\#})\Bigr\|_{L^2(D^\#)}^2+\O( \,r^{-N}).\quad\quad\quad\quad\quad
	\end{multline*}
	 for some large fixed constant $C$. By  Minkowski's inequality, we are reduced to
	\begin{equation}
	\label{eq:ext-red}
	\max_{T_1\in\mathbf{T}_1}
	\Bigl\|\bigl((1-P_{D^\flat}) F^\lambda\bigr)_{T_1}(t_{D^\#})\Bigr\|_{L^2(D^\#)}\lesssim r^{-100N}.
	\end{equation}
	To this end, we condider the two cases
	\begin{itemize}
		\item Case A. $\mathsf{dist}(T_1,D^\#)\ge R^{\frac{1}{2}+\frac{1}{100 N}}$,
		\item Case B. $\mathsf{dist}(T_1,D^\#)\le R^{\frac{1}{2}+\frac{1}{100 N}}$,
	\end{itemize}
	For Case A, \eqref{eq:ext-red} follows from the concentration property \eqref{eq:decay-wp-0} and the summability estimate \eqref{eq:L-2-sum}. To handle Case B, we recall from the wave-packet decomposition that if we let $F_\mathsf{ext}^\lambda=(1-P_{D^\flat})F^\lambda$, then 
	$$
	\EEE\bigl(\bigl(F^\lambda_{\mathsf{ext}}\bigr)_{T_1}\bigr)\lesssim \varpi^{-O(1)}\|\psi_{T_1}(t_{D^{\#}}) F^{\lambda}_{\mathsf{ext}}(t_{D^\#})\|_{L^2}^2 + r^{-100N}.
	$$
	Using the assumption $r\ge R^{\frac{1}{2}+\frac{1}{N}}$ and that in Case B, we have $$\mathsf{dist}\Bigl(\underbrace{\xxx_{T_1}+t_{D^\#}\bigl(\frac{\mu_{T_1}}{\lambda},-\frac{|\mu_{T_1}|^2}{\lambda^2},1\bigr)}_{:=\widetilde{X}},\;\xxx_{D^\#}\Bigr)\lesssim r^\# + R^{\frac{1}{2}+\frac{1}{100 N}},$$
	where $z=(\xxx_{D^\#},t_{D^\#})$ and $T_1$ is parametrized by $(\xxx_{T_1},\mu_{T_1})$. Simple calculation yields that the disk centered at $\widetilde{X}$ of radius $R^{\frac{1}{2}+\frac{1}{100 N}}$ is contained in $D^\flat_-$. Using the rapid decay of $\psi_{T_1}$,	
	\eqref{2.17} and $D^\#\subset D_{-}^\flat$, we obtain \eqref{eq:ext-red}.

	By the same argument, we  have similar estimates for $\mathcal{G}^{\lambda,\varDelta}$.
	Collecting all of these estimates, we obtain the desired result by suitably adjusting the constant $C$.
\end{proof}

\subsection{Control of $\mathscr{A}^{\lambda}$ by $A^\lambda$}
\begin{proposition}
	\label{pp_2}
	There is a constant $C>0$ depending only on $n$ and $Z$, but not explicitly on $C_0$, such that we have for all $R\in[ 2^{10C_0},\lambda]$ and all $r\ge R^{\frac{1}{2}+\frac4N}$
	\begin{equation}
	\label{eq:pp-2}
	\mathscr{A}^{\lambda}({R},r,C_0(r+1))\le (1+C2^{-C_0})\overline{A}^\lambda(R)+2^{CC_0}.
	\end{equation}
\end{proposition}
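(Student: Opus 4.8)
The plan is to follow the non‑concentrated reduction of \cite[\S15]{TaoMZ}, transplanted to the $S^\lambda$ framework, the guiding principle being that on a $\lambda$‑stretched cube of size $R$ a conic constraint of radius only $r'=C_0(r+1)$ costs essentially nothing once one works with the $C_0$‑quilt. Normalising $E(F^\lambda)=E(G^\lambda)=1$, the task is to prove
\[
\bigl\|F^\lambda G^\lambda\bigr\|_{Z(Q^\lambda_R\cap\,\mathcal{C}^\lambda(z_0,r'))}\le\bigl((1+C2^{-C_0})\overline{A}^\lambda(R)+2^{CC_0}\bigr)\,\lambda^{1/q}\,E_{r,Q^\lambda_R}(F^\lambda,G^\lambda)^{1/q'}
\]
for all $z_0$; since $\mathcal{C}^\lambda(z_0,r')=\mathbf{ \Lambda}_1^\lambda(z_0,r')\cup\mathbf{ \Lambda}_2^\lambda(z_0,r')$ and the construction is symmetric under interchanging the colours, it suffices to treat $\mathbf{ \Lambda}_1^\lambda(z_0,r')$, and after translating we take $z_0=(\mathbf 0,0)$. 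We also record $E_{r,Q^\lambda_R}(F^\lambda,G^\lambda)\in[\tfrac12,1]$.

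\emph{Thin‑cone regime $r'\le 2^{-C_0}R$.} Here $\overline{A}^\lambda(R)$ is not needed at all. Writing $\indic_{\mathbf{ \Lambda}_1^\lambda(z_0,r')}F^\lambda G^\lambda=F^\lambda\bigl(\indic_{\mathbf{ \Lambda}_1^\lambda(z_0,r')}G^\lambda\bigr)$, Lemma \ref{lem:opposite} with $(j,k)=(2,1)$ gives $\|\indic_{\mathbf{ \Lambda}_1^\lambda(z_0,r')}G^\lambda\|_{L^2(Q^\lambda_R)}\lesssim(\lambda r')^{1/2}$, while the energy identity gives $\|F^\lambda\|_{L^2(Q^\lambda_R)}\lesssim(\lambda R)^{1/2}$, whence $\|F^\lambda G^\lambda\|_{L^q_tL^1_x(Q^\lambda_R\cap\mathbf{ \Lambda}_1^\lambda(z_0,r'))}\lesssim(\lambda R)^{1/q}(1+R/r)^{-1/2q}$. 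Interpolating this with the bilinear $L^q_tL^2_x$ estimate coming from the wave‑packet decomposition exactly as in Step 1 of the proof of Proposition \ref{pp:C-0 quilt} (which carries the gain $R^{1/q-(n+1)/4}$) yields $\|F^\lambda G^\lambda\|_{Z(Q^\lambda_R\cap\mathbf{ \Lambda}_1^\lambda(z_0,r'))}\lesssim\lambda^{1/q}(1+R/r)^{-\kappa}$ for a $\kappa=\kappa_Z>0$; since $E_{r,Q^\lambda_R}(F^\lambda,G^\lambda)^{1/q'}\ge 2^{-1/q'}$ this is bounded by $2^{CC_0}\lambda^{1/q}E_{r,Q^\lambda_R}(F^\lambda,G^\lambda)^{1/q'}$, which is stronger than required.

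\emph{Wide‑cone regime $r'>2^{-C_0}R$, main case.} Apply Proposition \ref{pp:C-0 quilt} with $\varpi=2^{-C_0}$ and $Q^*=CQ^\lambda_R$: up to the factor $(1+C2^{-C_0})$ and an additive error $\lambda^{1/q}\varpi^{-O(1)}(1+R/r)^{-\kappa}\le 2^{O(C_0)}\lambda^{1/q}$ we are reduced to $\|[\mathcal{F}^\lambda]_{C_0}[\mathcal{G}^\lambda]_{C_0}\|_{Z(X_{z_0}^{\varpi,r'}(Q^*))}$. Now $X_{z_0}^{\varpi,r'}(Q^*)=\bigcup_{\varDelta\in\mathcal{Q}_{C_0}(Q^*)}\bigl((1-\varpi)\varDelta\cap\mathcal{C}^\lambda(z_0,r')\bigr)$ is a union whose members are pairwise disjoint in every time slice, and on each $\varDelta$ one has $[\mathcal{F}^\lambda]_{C_0}=\mathcal{F}^{\lambda,\varDelta}$, $[\mathcal{G}^\lambda]_{C_0}=\mathcal{G}^{\lambda,\varDelta}$; hence, by the slicewise subadditivity of $x\mapsto x^{q/s}$ on $[0,\infty)$ (valid whichever of $q,s$ is larger),
\[
\bigl\|[\mathcal{F}^\lambda]_{C_0}[\mathcal{G}^\lambda]_{C_0}\bigr\|_{Z(X_{z_0}^{\varpi,r'}(Q^*))}^{q}\le\sum_{\varDelta\in\mathcal{Q}_{C_0}(Q^*)}\bigl\|\mathcal{F}^{\lambda,\varDelta}\mathcal{G}^{\lambda,\varDelta}\bigr\|_{Z(\varDelta)}^{q}.
\]
Since $\mathcal{F}^{\lambda,\varDelta}$, resp. $\mathcal{G}^{\lambda,\varDelta}$, has Fourier support in $\varSigma_1^\lambda$, resp. $\varSigma_2^\lambda$, and satisfies the margin condition at the smaller scale $2^{-C_0}R$ (decreasing $R$ only relaxes that condition), Definition \ref{def-A} gives $\|\mathcal{F}^{\lambda,\varDelta}\mathcal{G}^{\lambda,\varDelta}\|_{Z(\varDelta)}\le A^\lambda(2^{-C_0}R)\lambda^{1/q}\bigl(E(\mathcal{F}^{\lambda,\varDelta})E(\mathcal{G}^{\lambda,\varDelta})\bigr)^{1/2}\le\overline{A}^\lambda(R)\lambda^{1/q}\bigl(E(\mathcal{F}^{\lambda,\varDelta})E(\mathcal{G}^{\lambda,\varDelta})\bigr)^{1/2}$. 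To sum, factor $\bigl(E(\mathcal{F}^{\lambda,\varDelta})E(\mathcal{G}^{\lambda,\varDelta})\bigr)^{q/2}=\bigl(E(\mathcal{F}^{\lambda,\varDelta})E(\mathcal{G}^{\lambda,\varDelta})\bigr)^{(q-1)/2}\bigl(E(\mathcal{F}^{\lambda,\varDelta})E(\mathcal{G}^{\lambda,\varDelta})\bigr)^{1/2}$, note $q-1=q/q'$, bound the first factor uniformly in $\varDelta$ by $\bigl(E(\mathcal{F}^{\lambda,\varDelta})E(\mathcal{G}^{\lambda,\varDelta})\bigr)^{1/2}\le 2\,E_{r^\#,CQ^*}(\mathcal{F}^{\lambda,\varDelta},\mathcal{G}^{\lambda,\varDelta})\le 2(1+C2^{-C_0})E_{r,CQ^*}(F^\lambda,G^\lambda)+\O(2^{CC_0}+R^{-N/2})$ by Proposition \ref{pp:persist}, and bound $\sum_\varDelta\bigl(E(\mathcal{F}^{\lambda,\varDelta})E(\mathcal{G}^{\lambda,\varDelta})\bigr)^{1/2}\le\bigl(\sum_\varDelta E(\mathcal{F}^{\lambda,\varDelta})\bigr)^{1/2}\bigl(\sum_\varDelta E(\mathcal{G}^{\lambda,\varDelta})\bigr)^{1/2}\le(1+C2^{-C_0})^2$ by Lemma \ref{lem:bessel-w-t} and Cauchy--Schwarz. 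Taking $q$‑th roots and feeding back the error of Proposition \ref{pp:C-0 quilt}, then using $E_{r,Q^\lambda_R}(F^\lambda,G^\lambda)\ge\tfrac12$ to absorb the additive errors into $2^{CC_0}\lambda^{1/q}E_{r,Q^\lambda_R}(F^\lambda,G^\lambda)^{1/q'}$, one arrives at \eqref{eq:pp-2}.

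The delicate point — and the main obstacle — is the constant‑tracking in the wide‑cone regime: passing from the full energies of $\mathcal{F}^{\lambda,\varDelta},\mathcal{G}^{\lambda,\varDelta}$ to the concentrated quantity $E_r$ in the last display introduces a factor $2^{1/q'}$. When $n\ge 3$ one has $q_c=1$, so taking $(q,s)$ close enough to $(q_c,r_c)$ that $q'\ge 2^{C_0}$ makes $2^{1/q'}=1+O(2^{-C_0})$ and the factor is absorbed into the leading $(1+C2^{-C_0})$; for $n=2$, where $q'\approx 4$ stays bounded away from $1$, this crude transition is not admissible and one must instead, in the regime where $\sup_D\|F^\lambda\|_{L^2(D)}\|G^\lambda\|_{L^2(D)}<\tfrac12 E(F^\lambda)^{1/2}E(G^\lambda)^{1/2}$, exploit the non‑concentration directly (and when $r'\ll R$ the thin‑cone gain above) to beat the generic bound by the needed factor $2^{-1/q'}$, exactly as in \cite[\S15]{TaoMZ}; all other ingredients (disjointness of the quilt, the $C_0$‑adic structure of $\mathcal{Q}_{C_0}(Q^*)$, Lemma \ref{lem:opposite} and Proposition \ref{pp:persist}) enter routinely, the source of $2^{CC_0}$ being the accumulated $\varpi^{-O(1)}$ losses and the error in Proposition \ref{pp:persist}.
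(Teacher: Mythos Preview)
Your proposal diverges from the paper's proof in two essential ways, and both lead to genuine gaps.

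\textbf{The thin--cone regime.} You claim an $L^q_tL^2_x$ bound for $F^\lambda G^\lambda$ on $Q^\lambda_R$ with gain $R^{1/q-(n+1)/4}$, citing ``Step~1 of the proof of Proposition~\ref{pp:C-0 quilt}''. That estimate (see \eqref{eq:good-bi}) is \emph{not} for $F^\lambda G^\lambda$: it holds only for the wave--table error terms $\mathcal{F}^{\lambda,\varDelta}\mathcal{G}^{\lambda,\varDelta'}$ on $\mathfrak{I}^{\varpi,C_0}(Q^*)\setminus\{\varDelta'\}$, where the removed cube $\varDelta'$ is precisely what produces the Kakeya--type gain. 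For the full product the only available $L^2$ bound is the transversality estimate $\|F^\lambda G^\lambda\|_{L^2(\R^{n+2})}\lesssim\lambda^{1/2}$, which after H\"older in time yields $L^q_tL^2_x\lesssim\lambda^{1/q}R^{1/q-1/2}$; interpolating this with your $L^q_tL^1_x$ bound gives $\lambda^{1/q}R^{1/q-\theta/2}(r'/R)^{(1-\theta)/(2q)}$ with $\theta=4/(q(n+1))$, and the exponent of $R$ vanishes only when $n=1$. So for $n\ge 2$ the direct thin--cone bound fails. The paper handles this regime (its ``concentrated case'' $r\le C_0^C R$) not by a direct estimate but by \emph{iterating} the recursive relation of Proposition~\ref{pp:the-con},
\[
\mathscr{A}^\lambda(R,r,r')\le(1+C\varpi)\,\mathscr{A}^\lambda(R/C_0,r^\#,r')+\varpi^{-C}(1+R/r')^{-\theta},
\]
until the scale drops to $\sim r$, and only then invoking Proposition~\ref{pp:the noncon}. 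The exponent $1/q'$ enters through the very definition of $\mathscr{A}^\lambda$, so no factor of $2$ ever appears in the recursion.

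\textbf{The wide--cone regime.} Here you skip the $P_D$ localisation and go straight to wave tables for $F^\lambda,G^\lambda$. As you yourself note, bounding $\bigl(E(\mathcal{F}^{\lambda,\varDelta})E(\mathcal{G}^{\lambda,\varDelta})\bigr)^{1/2}\le 2E_{r^\#,CQ^*}(\mathcal{F}^{\lambda,\varDelta},\mathcal{G}^{\lambda,\varDelta})$ costs a factor $2^{1/q'}$ after taking $q$--th roots. For $n=2$ one has $q'\approx 4$, so $2^{1/q'}\approx 1.19$, and the bound does not close; your proposed fix (``exploit the non--concentration directly'') is not a fix, since in the regime $E_{r,Q}(F^\lambda,G^\lambda)=\tfrac12$ you must produce $\|F^\lambda G^\lambda\|_Z\le(1+o(1))\overline{A}^\lambda(R)\lambda^{1/q}\cdot 2^{-1/q'}$, and the unlocalised definition of $A^\lambda$ only gives $\overline{A}^\lambda(R)\lambda^{1/q}$. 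The paper avoids this entirely in Proposition~\ref{pp:the noncon} by first applying $P_D$ with $D=D(z_Q,r/2)$: then Lemma~\ref{lem:Tao10.2} gives $E(P_D F^\lambda)\le\|F^\lambda\|^2_{L^2(D_+)}+O(r^{-N})$, and since $D_+$ has radius $<r$ with time coordinate in $\mathsf{LS}(C_0Q)$, the product $\|F^\lambda\|_{L^2(D_+)}\|G^\lambda\|_{L^2(D_+)}$ is itself dominated by $E_{r,Q}(F^\lambda,G^\lambda)$. This is how the concentrated energy enters with coefficient $1+C\varpi$ rather than $2$.

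In short: the paper's proof rests on two devices you have bypassed---$P_D$ localisation in the non--concentrated regime and scale iteration (Proposition~\ref{pp:the-con}) in the concentrated regime---and neither can be dropped without the argument breaking for $n=2$ (and, in the thin--cone case, for all $n\ge 2$).
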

We divide the proof into three steps.

\subsubsection{Step 1. The non-concentrated case $r \ge C_0R$}

Recall a technical lemma:
\begin{lemma}
	\label{lem:Te}
	Let $F_1,F_2,\ldots,F_k$ be a finite number of functions on $\R^{n+2}$ such that  $\{F_j\}_j\subset Z(\R^{n+2})$ and that the supports of these functions are mutually disjoint. Then, we have
	$$
	\Bigl\|\sum_{j=1}^k F_j\Bigr\|_{Z}^q\le \sum_{j=1}^k\|F_j\|_Z^q\;,
	$$
	for all $(q,s)\in\mathbf{\Gamma}$ close to the critical index $(q_c,r_c)$ .
\end{lemma}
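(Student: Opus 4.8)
The plan is to reduce the statement to the scalar subadditivity of $u\mapsto u^{\theta}$ on $[0,\infty)$ for $\theta\in(0,1]$, after observing that on the sub-arc of $\mathbf{\Gamma}$ we actually use the outer exponent $q$ does not exceed the inner exponent $s$. First I would unwind the definition of the $Z$-norm: since the $F_j$ are functions on all of $\R^{n+2}$ we have $\mathcal I=\R$, $\Pi_t=\R^{n+1}$, so
$$
\Bigl\|u\Bigr\|_{Z(\R^{n+2})}^{q}=\int_{\R}\Bigl(\int_{\R^{n+1}}|u(\xxx,t)|^{s}\,d\xxx\Bigr)^{q/s}\,dt .
$$
Because $\supp F_i\cap\supp F_j$ is a null set for $i\neq j$, at almost every spacetime point at most one summand is nonzero, so $|\sum_j F_j(\xxx,t)|^{s}=\sum_j|F_j(\xxx,t)|^{s}$ a.e.; integrating in $\xxx$ gives, with $g_j(t):=\|F_j(\cdot,t)\|_{L^{s}_{\xxx}}^{s}\ge 0$,
$$
\Bigl\|\sum_j F_j(\cdot,t)\Bigr\|_{L^{s}_{\xxx}}^{s}=\sum_j g_j(t).
$$

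Next I would dispose of the one genuinely quantitative point, namely $q\le s$. Along $\mathbf{\Gamma}$ one checks that $q\le r$ holds exactly when $r\ge\frac{n+3}{n+1}$, and a direct comparison gives $r_c=\frac{n+1}{n-1}>\frac{n+3}{n+1}$ for $n\ge 3$ while $r_c=2>\frac53$ for $n=2$; equivalently $q_c<r_c$ in every dimension, so by continuity $q=q_c^{+}<r_c^{-}=s$ once the exponents are chosen close enough to the critical index (here one uses that the allowed perturbation is fixed in terms of $n$ and $\eps$). Hence $\theta:=q/s\in(0,1)$, and concavity of $u\mapsto u^{\theta}$ on $[0,\infty)$ together with $0^{\theta}=0$ yields the subadditivity $(\sum_j a_j)^{\theta}\le\sum_j a_j^{\theta}$ for any finite family of nonnegative reals. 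Applying this with $a_j=g_j(t)$, integrating in $t$, and using Tonelli to exchange the finite sum with the integral, I get
$$
\Bigl\|\sum_j F_j\Bigr\|_{Z}^{q}=\int_{\R}\Bigl(\sum_j g_j(t)\Bigr)^{q/s}\,dt\le\int_{\R}\sum_j g_j(t)^{q/s}\,dt=\sum_j\int_{\R}\|F_j(\cdot,t)\|_{L^{s}_{\xxx}}^{q}\,dt=\sum_j\|F_j\|_{Z}^{q},
$$
which is the assertion.

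I do not expect any real obstacle in this argument: it is pointwise disjointness followed by the scalar fact that $u\mapsto u^{q/s}$ is subadditive, and the only step requiring a moment's attention is the verification $q\le s$ on the portion of $\mathbf{\Gamma}$ near $(q_c,r_c)$, i.e.\ the elementary inequality $r_c>\tfrac{n+3}{n+1}$. This lemma plays exactly the role of the standard $\ell^{q}$-nesting estimate for disjointly supported pieces used, for instance, in \cite{Temur,LeeVargas}, adapted here to the mixed $L^q_tL^s_x$ norm.
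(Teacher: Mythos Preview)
Your argument is correct and is precisely the standard one: disjoint supports make the inner $L^s_{\xxx}$-norms additive at the $s$-th power level, and then the subadditivity of $u\mapsto u^{q/s}$ for $q\le s$ handles the outer integral. The paper does not supply its own proof here but simply refers to Lemma~5.3 of \cite{Temur}, whose argument is essentially the one you have written out; in particular your verification that $q_c<r_c$ (equivalently $r_c>\tfrac{n+3}{n+1}$) in every dimension $n\ge2$ is exactly the point that singles out the near-critical portion of $\mathbf{\Gamma}$ on which the lemma is asserted.
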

Please see Lemma 5.3 of \cite{Temur} for the proof.

\begin{proposition}
	\label{pp:the noncon}
	There is a constant $C>0$ such that for any $R\in[2^{10C_0},\lambda]$, we have for all $r \ge C_0R$ and $r'>0$
	$$\mathscr{A}^{\lambda}(R,r, r')\le (1+C \varpi)
	\overline{A}^{\lambda}(R)+\varpi^{-C},
	$$
	for all $0<\varpi\le 2^{-C_0}$.
\end{proposition}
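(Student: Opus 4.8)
The plan is to obtain Proposition~\ref{pp:the noncon} straight from the definition of $\overline{A}^\lambda(R)$, by localizing $F^\lambda$ and $G^\lambda$ with the operator $P_D$ of Section~\ref{sect:huygens} to a disk $D$ of radius comparable to the cube size $R$, and then exploiting that in the range $r\ge C_0^{C}R$ this disk is contained in a disk of radius $r$ sitting over $Q^\lambda_R$; the energies of the localized waves are then automatically governed by the energy concentration $E_{r,Q^\lambda_R}$, which is exactly the factor appearing in Definition~\ref{def:AAA}. First I would reduce. By the homogeneity in $\|f\|_2,\|g\|_2$ of the inequality defining $\mathscr{A}^\lambda$ one may normalize $E(F^\lambda)=E(G^\lambda)=1$, and after a spacetime translation and a frequency modulation one may take $Q^\lambda_R=Q^\lambda(\xxx_0,t_0;R)$. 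For every $z_0$ and every $r'>0$ one has $Q^\lambda_R\cap\mathcal{C}^\lambda(z_0,r')\subseteq Q^\lambda_R$, so the conic cut-off can simply be discarded; and since $E_{r,Q^\lambda_R}(F^\lambda,G^\lambda)\le E(F^\lambda)^{1/2}E(G^\lambda)^{1/2}=1$ we have $E_{r,Q^\lambda_R}\le E_{r,Q^\lambda_R}^{1/q'}$. Hence it suffices to prove
$$
\|F^\lambda G^\lambda\|_{Z(Q^\lambda_R)}\le\overline{A}^\lambda(R)\,\lambda^{1/q}\,E_{r,Q^\lambda_R}(F^\lambda,G^\lambda)+O\bigl(\lambda^{1/q}R^{-N/4}\bigr),
$$
because a crude H\"older estimate on $Q^\lambda_R$ together with the pointwise bound $\|F^\lambda\|_{\infty},\|G^\lambda\|_{\infty}\lesssim_{C_0}E(\cdot)^{1/2}$ (coming from the bounded Fourier support) yields $\overline{A}^\lambda(R)\lesssim R^{O(1)}$ with an exponent depending only on $n,q,s$, so all errors that arise stay of size $O(\lambda^{1/q}R^{-N/4})$, which is absorbed by $\varpi^{-C}\lambda^{1/q}E_{r,Q^\lambda_R}^{1/q'}$, while $\overline{A}^\lambda(R)\le(1+C\varpi)\overline{A}^\lambda(R)$ is trivial.

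Next I would carry out the localization. Fix a constant $C_1=C_1(n)$ larger than the implicit largeness constant of Lemma~\ref{lem:conctr'}, pick $C=C(n)$ so that the hypothesis $r\ge C_0^{C}R$ forces $C_0^{C}\ge 2C_1$, and set $D=D(\xxx_0,t_0;C_1R)$, $\tilde F^\lambda=P_DF^\lambda$, $\tilde G^\lambda=P_DG^\lambda$. Then $Q^\lambda_R$ lies in the region where Lemma~\ref{lem:conctr'} applies to $D$, so $\|(1-P_D)F^\lambda\|_{L^\infty(Q^\lambda_R)},\|(1-P_D)G^\lambda\|_{L^\infty(Q^\lambda_R)}\lesssim R^{-N}$. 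Combining this with the uniform $L^\infty$ bounds for $F^\lambda,\tilde F^\lambda,G^\lambda$, with $|Q^\lambda_R|\lesssim\lambda R^{n+2}$, and with the identity $F^\lambda G^\lambda-\tilde F^\lambda\tilde G^\lambda=((1-P_D)F^\lambda)G^\lambda+(P_DF^\lambda)((1-P_D)G^\lambda)$, an elementary H\"older estimate gives $\|F^\lambda G^\lambda-\tilde F^\lambda\tilde G^\lambda\|_{Z(Q^\lambda_R)}\lesssim\lambda^{1/q}R^{-N/2}$, so $F^\lambda,G^\lambda$ may be replaced by $\tilde F^\lambda,\tilde G^\lambda$ at negligible cost.

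Finally I would invoke $\overline{A}^\lambda(R)$ for the localized waves. Since $P_D$ perturbs the spacetime Fourier support by only $O((C_1R)^{-1+1/N})=O(R^{-1/N})$, after the routine margin bookkeeping one has $\tilde F^\lambda\in\mathfrak{R}^\lambda_R$ and $\tilde G^\lambda\in\mathfrak{B}^\lambda_R$, so Definition~\ref{def-A} gives $\|\tilde F^\lambda\tilde G^\lambda\|_{Z(Q^\lambda_R)}\le\overline{A}^\lambda(R)\lambda^{1/q}E(\tilde F^\lambda)^{1/2}E(\tilde G^\lambda)^{1/2}$. By \eqref{2.18}, $E(\tilde F^\lambda)=\|P_DF^\lambda(t_0)\|_{L^2}^2\le\|F^\lambda\|_{L^2(D_+)}^2+O(R^{-N})$; the choice $C_0^{C}\ge 2C_1$ forces $D_+\subseteq D':=D(\xxx_0,t_0;r)$, and $t_0\in\mathsf{LS}(C_0Q^\lambda_R)$, so $\|F^\lambda\|_{L^2(D_+)}\le\|F^\lambda\|_{L^2(D')}$, and likewise for $G^\lambda$. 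Multiplying, and recalling that $D'$ is one of the disks admitted in the supremum defining $E_{r,Q^\lambda_R}$,
$$
E(\tilde F^\lambda)^{1/2}E(\tilde G^\lambda)^{1/2}\le\|F^\lambda\|_{L^2(D')}\|G^\lambda\|_{L^2(D')}+O(R^{-N/2})\le E_{r,Q^\lambda_R}(F^\lambda,G^\lambda)+O(R^{-N/2}).
$$
Assembling the last three displays and using $\overline{A}^\lambda(R)R^{-N/2}\lesssim R^{-N/4}$ produces the displayed inequality of the reduction step, and hence the proposition.

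I expect the only genuinely delicate point to be the margin bookkeeping in the last step: one must verify that $P_DF^\lambda$ still qualifies as an admissible input for $\overline{A}^\lambda(R)$, i.e.\ that the $O(R^{-1/N})$ degradation of $\mathsf{marg}$ caused by the spatial cut-off is absorbed by the slack built into the definition of $\mathfrak{R}^\lambda_R$; this is handled exactly as in \cite{TaoMZ}, and in the paraboloid setting as in \cite{Lee21}. Everything else is a routine Huygens-type localization — the one conceptual ingredient being that $r\ge C_0^{C}R$ is precisely what makes a single radius-$r$ disk large enough to carry the whole portion of each wave that is visible over $Q^\lambda_R$, so that passing to the localized waves costs nothing and their energies are already controlled by $E_{r,Q^\lambda_R}$.
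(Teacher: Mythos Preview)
Your strategy—localize once with $P_D$ and then apply $\overline{A}^\lambda(R)$ directly to $\tilde F^\lambda=P_DF^\lambda,\ \tilde G^\lambda=P_DG^\lambda$ on the full cube $Q^\lambda_R$—is more elementary than the paper's, which routes the localized waves through the wave-table approximation (Proposition~\ref{pp:C-0 quilt}) before invoking $A^\lambda(2^{-C_0}R)$ on the sub-cubes $\varDelta\in\mathcal{Q}_{C_0}(Q^*)$. The localization step and the final energy comparison via \eqref{2.18} are indeed common to both arguments.

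The gap is exactly where you flag it, but it is not routine. The input $F^\lambda$ ranges over all of $\mathfrak{R}^\lambda_R$, so its margin may already equal the borderline value $C_0^{-20}-R^{-1/N}$ with no slack left. Applying $P_D$ (with radius $r_D\ge C_0$) shrinks the margin by a strictly positive amount $O(r_D^{-1+1/N})$, so in general $\tilde F^\lambda\notin\mathfrak{R}^\lambda_R$, and Definition~\ref{def-A} at scale $R$ is simply unavailable. One does have $\tilde F^\lambda\in\mathfrak{R}^\lambda_{R'}$ for any $R'$ with $(R')^{-1/N}\ge R^{-1/N}+O(r_D^{-1+1/N})$, e.g.\ $R'=R/2$; but then covering $Q^\lambda_R$ by $O(1)$ cubes of side $R'$ and summing via Lemma~\ref{lem:Te} costs a fixed factor $2^{(n+2)/q}$ in front of $\overline{A}^\lambda(R)$, not $(1+C\varpi)$. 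That destroys the proposition as stated, and the $(1+C\varpi)$ coefficient is exactly what the final induction in Section~\ref{sect:pf-thm} needs in order to make $(1+C\varpi)(1-\delta)^{1/q'}<1$.

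This is why the paper's detour through wave tables is not decorative. Proposition~\ref{pp:C-0 quilt} passes from $Q^\lambda_R$ to the sub-cubes $\varDelta$ of side $2^{-C_0}R$ with only a $(1+C\varpi)$ multiplicative loss, and on each $\varDelta$ the relaxed margin requirement of $\mathfrak{R}^\lambda_{2^{-C_0}R}$ absorbs both the $P_D$-perturbation and the wave-packet truncations; then the Bessel inequality of Lemma~\ref{lem:bessel-w-t} controls $\sum_\varDelta E(\mathcal{F}_D^{\lambda,\varDelta})$ by $(1+C\varpi)E(P_DF^\lambda)$ rather than by a cardinality factor. In \cite{TaoMZ} the non-concentrated case is handled by the same descent for the same reason; it is not resolved by a direct application of the inductive quantity as you suggest.
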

\begin{proof}
	Let $F^\lambda\in\mathfrak{R}^\lambda_R,G^\lambda\in\mathfrak{B}^\lambda_R$ be red and blue waves  with normalized energy.
	For any $Q=Q^\lambda_R$, let $(\xxx_Q,t_Q)$ be the center of $Q$. Let $D=D(z_D,r/2)$ with $z_D=(\xxx_Q,t_Q)$ and write
	$$
	F^\lambda=P_D F^\lambda+(1-P_D)F^\lambda,\;G^\lambda=P_D G^\lambda+(1-P_D)G^\lambda.$$
	Using Lemma \ref{lem:conctr'}, we have
	$$
	\|\bigl((1-P_D)F^\lambda\bigr)\; G^\lambda\|_{Z(Q_R^{\lambda})},\;\|(P_D F^\lambda)(1-P_D)G^\lambda\|_{Z(Q_R^\lambda)}\le \lambda^{1/q}\varpi^{-O(1)}.
	$$
	We are reduced to
	\begin{equation}
	\label{UHU}
\lambda^{-1/q}	\|P_D F^\lambda P_D G^\lambda\|_{Z(Q)}\le (1+C\varpi)\overline{A}^{\lambda}(R)\, \EEE_{r,C_0Q}(F^\lambda,G^\lambda)^{1/q'}+\varpi^{-O(1)}.
	\end{equation}
	To see this is the case, let $\mathcal{F}^\lambda_D$ and $\mathcal{G}^\lambda_D$ be the wave tables  for the red and blue waves $P_DF^\lambda$ and $P_D G^\lambda$ on an enlarged cube $Q^*$ containing $Q$ and  apply Proposition \ref{pp:C-0 quilt} so that we have
	$$
	\|P_D F^\lambda P_D G^\lambda\|_{Z(Q_R^\lambda)}\le (1+C\varpi)
	\bigl\|[\mathcal{F}_D^\lambda]_{C_0}[\mathcal{G}_D^\lambda]_{C_0}\bigr\|_{Z(\mathfrak{I}^{\varpi,C_0}(Q^*))}+\lambda^{1/q}\varpi^{-O(1)}.
	$$
	Applying Lemma \ref{lem:Te} and the definition of $A^\lambda(R)$, we get
	\begin{multline}
	\lambda^{-1/q}\bigl\|[\mathcal{F}_D^\lambda]_{C_0}[\mathcal{G}_D^\lambda]_{C_0}\bigr\|_{Z(\mathfrak{I}^{\varpi,C_0}(Q^*))}\le\Bigl(\sum_{\varDelta\in \mathcal{Q}_{C_0}(Q^*)}
	\lambda^{-1}\bigl\|\mathcal{F}_{D}^{\lambda,\varDelta}\,\mathcal{G}_{D}^{\lambda,\varDelta}\bigr\|^q_{Z(\varDelta)}\Bigr)^{1/q}\\
	\le A^\lambda(2^{-C_0}R)\Bigl(\sum_{\triangle\in\mathcal{Q}_{C_0}(Q^*)}
	\EEE(\mathcal{F}_{D}^{\lambda,\varDelta})^{q/2} \EEE(\mathcal{G}_{D}^{\lambda,\varDelta})^{q/2}\Bigr)^{1/q},
	\end{multline}
	where we have used $\mathcal{F}^{\lambda,\varDelta}_D\in\mathfrak{R}^\lambda_{2^{-C_0}R},\, \mathcal{G}^{\lambda,\varDelta}_D\in\mathfrak{B}^\lambda_{2^{-C_0}R}.$
	Using Cauchy-Schwarz,
	$\EEE(\mathcal{F}_{D}^\lambda), \EEE(\mathcal{G}_D^\lambda)\le 1+C\varpi$ by  Lemma \ref{lem:bessel-w-t}, and \eqref{2.18} of Lemma \ref{lem:Tao10.2},
we get \eqref{UHU}.
\end{proof}
\subsubsection{Step 2.  The concentrated case $R^{1/2+3/N}\le r\le C_0 R$}
\begin{proposition}
	\label{pp:the-con}
	There is $C>0$ and $\theta>0$ such that for any $R\in[ 2^{10C_0},\lambda]$, we have for all $r,r'>0$ with $R^{1/2+3/N}\le r\le C_0 R$.
	$$
	\mathscr{A}^{\lambda}(R,r,r')\le (1+C\varpi) \mathscr{A}^{\lambda}(R/C_0,r^\#,r')+\varpi^{-C}\Bigl(1+\frac{R}{r'}\Bigr)^{-\theta}
	$$
	with $r^\#=r(1-C_0r^{-\frac{1}{3N}})$
	holds	for all $0<\varpi\le 2^{-C_0}$.
\end{proposition}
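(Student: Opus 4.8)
The plan is to derive the estimate entirely from the two tools already in place: the $C_0$-quilt approximation of Proposition~\ref{pp:C-0 quilt} (in the form~\eqref{eq:app-2}) and the persistence of non-concentration of Proposition~\ref{pp:persist}, combining them so that the $2^{O(C_0)}$ subcubes $\varDelta$ cost only a factor $(1+C\varpi)$. By the bilinear scaling built into Definition~\ref{def:AAA} it suffices to work with energy-normalized waves $F^\lambda\in\mathfrak{R}^\lambda_R$, $G^\lambda\in\mathfrak{B}^\lambda_R$, $E(F^\lambda)=E(G^\lambda)=1$, and to show
\[
\|F^\lambda G^\lambda\|_{Z(Q\cap\mathcal{C}^\lambda(z_0,r'))}\le(1+C\varpi)\,\mathscr{A}^\lambda(R/C_0,r^\#,r')\,\lambda^{1/q}E_{r,Q}(F^\lambda,G^\lambda)^{1/q'}+\lambda^{1/q}\varpi^{-C}\Bigl(1+\tfrac{R}{r'}\Bigr)^{-\theta}
\]
for every cube $Q=Q^\lambda_R$ and every $z_0$; since $E_{r,Q}(F^\lambda,G^\lambda)\ge\tfrac12$, taking the supremum then gives the proposition. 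First I would introduce the $(\lambda,\varpi,R^{1/2})$-wave tables $\mathcal{F}^\lambda,\mathcal{G}^\lambda$ for $F^\lambda,G^\lambda$ over $Q^*=CQ$ and apply~\eqref{eq:app-2} with the conic radius $r'$, which reduces matters to bounding $\bigl\|[\mathcal{F}^\lambda]_{C_0}[\mathcal{G}^\lambda]_{C_0}\bigr\|_{Z(X_{z_0}^{\varpi,r'}(Q^*))}$, the approximation error $\lambda^{1/q}\varpi^{-O(1)}(1+R/r')^{-\kappa}$ being already of the required shape (so one may take $\theta=\kappa$).

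Next I would unfold the quilt. Since $X_{z_0}^{\varpi,r'}(Q^*)=\bigcup_{\varDelta}\bigl((1-\varpi)\varDelta\cap\mathcal{C}^\lambda(z_0,r')\bigr)$ and $[\mathcal{F}^\lambda]_{C_0}[\mathcal{G}^\lambda]_{C_0}$ agrees with $\mathcal{F}^{\lambda,\varDelta}\mathcal{G}^{\lambda,\varDelta}$ on $(1-\varpi)\varDelta$, the restriction of the quilt product to $X_{z_0}^{\varpi,r'}(Q^*)$ is a sum over $\varDelta\in\mathcal{Q}_{C_0}(Q^*)$ of functions with pairwise disjoint supports; Lemma~\ref{lem:Te} (for the admissible exponent pair defining $Z$) then yields
\[
\bigl\|[\mathcal{F}^\lambda]_{C_0}[\mathcal{G}^\lambda]_{C_0}\bigr\|_{Z(X_{z_0}^{\varpi,r'}(Q^*))}^q\le\sum_{\varDelta\in\mathcal{Q}_{C_0}(Q^*)}\bigl\|\mathcal{F}^{\lambda,\varDelta}\mathcal{G}^{\lambda,\varDelta}\bigr\|_{Z(\varDelta\cap\mathcal{C}^\lambda(z_0,r'))}^q.
\]
Each $\varDelta$ is a $\lambda$-stretched cube at the smaller scale, and since the wave-table construction smears the spacetime Fourier support by only $O(R^{-1/2})$, which is $\ll C_0^{-20}$ and is comfortably covered by the slack $(R/C_0)^{-1/N}-R^{-1/N}$ in the margin requirement, one checks $\mathcal{F}^{\lambda,\varDelta}\in\mathfrak{R}^\lambda_{R/C_0}$, $\mathcal{G}^{\lambda,\varDelta}\in\mathfrak{B}^\lambda_{R/C_0}$. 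Applying the definition of $\mathscr{A}^\lambda$ at scale $R/C_0$ to the pair $(\mathcal{F}^{\lambda,\varDelta},\mathcal{G}^{\lambda,\varDelta})$ on the cube $\varDelta$ (with conic radius $r'$ and concentration radius $r^\#$), and raising to the $q$-th power, gives
\[
\bigl\|\mathcal{F}^{\lambda,\varDelta}\mathcal{G}^{\lambda,\varDelta}\bigr\|_{Z(\varDelta\cap\mathcal{C}^\lambda(z_0,r'))}^q\le\mathscr{A}^\lambda(R/C_0,r^\#,r')^q\,\lambda\,\bigl(E(\mathcal{F}^{\lambda,\varDelta})E(\mathcal{G}^{\lambda,\varDelta})\bigr)^{1/2}E_{r^\#,\varDelta}\bigl(\mathcal{F}^{\lambda,\varDelta},\mathcal{G}^{\lambda,\varDelta}\bigr)^{q/q'}.
\]

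The summation over $\varDelta$ is the delicate point, and it is where the two ingredients interlock. For the energy-concentration factor, Proposition~\ref{pp:persist} bounds $\sup_\varDelta E_{r^\#,\varDelta}(\mathcal{F}^{\lambda,\varDelta},\mathcal{G}^{\lambda,\varDelta})$ by $(1+C\varpi)E_{r,Q}(F^\lambda,G^\lambda)$ up to a negligible additive error: here one uses that $r^\#=r(1-C_0r^{-1/(3N)})$ lies below the output scale $r(1-C_0r^{-1/(2N)})$ of that proposition, monotonicity of $\rho\mapsto E_{\rho,\cdot}$, and the standard reconciliation of disk-center ranges as in~\cite{TaoMZ}. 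For the energy factor, Cauchy--Schwarz together with the Bessel-type inequality of Lemma~\ref{lem:bessel-w-t} gives
\[
\sum_{\varDelta}\bigl(E(\mathcal{F}^{\lambda,\varDelta})E(\mathcal{G}^{\lambda,\varDelta})\bigr)^{1/2}\le\Bigl(\sum_{\varDelta}E(\mathcal{F}^{\lambda,\varDelta})\Bigr)^{1/2}\Bigl(\sum_{\varDelta}E(\mathcal{G}^{\lambda,\varDelta})\Bigr)^{1/2}\le(1+C\varpi)^2,
\]
with no combinatorial factor in the number of cubes. Multiplying the last three displays, summing in $\varDelta$, taking $q$-th roots, and feeding the result back into~\eqref{eq:app-2} produces the claimed bound. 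The only genuine obstacle is this last step: it is essential that the $L^q$-orthogonality of Lemma~\ref{lem:Te} be married to the near-orthogonality of wave-table energies so that the $2^{O(C_0)}$ cubes $\varDelta$ contribute only $(1+C\varpi)$ and not $2^{O(C_0)}$ — this is precisely what the wave-table formalism buys over the pigeonholing of~\cite{Wolff}; the remaining items (margin conditions, disk-center bookkeeping, the passage $r\mapsto r^\#$, and tracking powers of $\varpi$) are routine and mirror~\cite{TaoMZ}.
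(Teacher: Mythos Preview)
Your proposal is correct and follows essentially the same route as the paper: apply \eqref{eq:app-2} to pass to the quilts on $Q^*$, split the $Z$-norm over the disjoint $\varDelta$'s via Lemma~\ref{lem:Te}, invoke the definition of $\mathscr{A}^\lambda$ at the smaller scale on each $\varDelta$, then control the energy-concentration factor uniformly by Proposition~\ref{pp:persist} and the remaining energy sum by Cauchy--Schwarz and Lemma~\ref{lem:bessel-w-t}. Your write-up is in fact somewhat more explicit than the paper's on the bookkeeping points (margin relaxation, the $r^\#$ exponent discrepancy, disk-center ranges), all of which are handled exactly as you indicate.
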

\begin{proof}
	Let $F^\lambda\in\mathfrak{R}^\lambda_R,G^\lambda\in \mathfrak{B}^\lambda_R$ be red and blue waves  with $\EEE(F^\lambda)=\EEE(G^\lambda)=1$. For any $Q=Q^\lambda_R$, by using Proposition \ref{pp:C-0 quilt}, we have for all $z$
	\begin{multline*}
	\bigl\|F^\lambda G^\lambda\bigr\|_{Z(Q\cap \mathcal{C}^\lambda(z,r'))}\\
	\le (1+C\varpi)\Bigl\|[\mathcal{F}^{\lambda}]_{C_0}[\mathcal{G}^{\lambda}]_{C_0}\Bigr\|_{Z(\mathfrak{I}^{\varpi,C_0}(Q^*)\cap
		\mathcal{C}^\lambda(z,r'))}
	+\lambda^{1/q}\varpi^{-C}\Bigl(1+\frac{R}{r'}\Bigr)^{-\kappa}.
	\end{multline*}
	We are reduced to showing
	\begin{multline}
	\label{OIU}
	\lambda^{-1/q}	\Bigl\|[\mathcal{F}^{\lambda}]_{C_0}[\mathcal{G}^{\lambda}]_{C_0}\Bigr\|_{Z(\mathfrak{I}^{\varpi,C_0}(Q^*)\cap
		\mathcal{C}(z,r'))}\\ \le (1+C\varpi) \mathscr{A}^{\lambda}(R/C_0,r^\#,r')\;
	\EEE_{r,C_0Q}(F^\lambda,G^\lambda)^{1/q'}+\varpi^{-C} R^{-N/2}.
	\end{multline}
	Using the definition of $\mathscr{A}^{\lambda}(R,r,r')$, we have for all $\varDelta$
	\begin{multline*}
	\lambda^{-1/q}\bigl\|\mathcal{F}^{\lambda,\varDelta}\mathcal{G}^{\lambda,\varDelta}\bigr\|_{Z(\varDelta\cap
		\mathcal{C}(z,r'))}\\
	\le \mathscr{A}^{\lambda}(R/C_0, r^\#,r')\,\EEE_{r^\#,C_0\varDelta}(\mathcal{F}^{\lambda,\varDelta},\mathcal{G}^{\lambda,\varDelta})^{1/q'}(\EEE(\mathcal{F}^{\lambda,\varDelta})\EEE(\mathcal{G}^{\lambda,\varDelta}))^{1/(2q)}
	\end{multline*}
	By using Lemma \ref{lem:Te}, Proposition \ref{pp:persist} with $2^{-C_0}CR\ll C_0^{-1}R$ so that $C_0\varDelta\subset 2Q$, we obtain by  Cauchy-Schwarz
	\begin{multline}
	\label{OIKU}
		\lambda^{-1}	\Bigl\|[\mathcal{F}^{\lambda}]_{C_0}[\mathcal{G}^{\lambda}]_{C_0}\Bigr\|^q_{Z(\mathfrak{I}^{\varpi,C_0}(Q^*)\cap
		\mathcal{C}(z,r'))}\\
	\le
\lambda^{-1}	\sum_{\varDelta\in\mathcal{Q}_{C_0}(Q^*)}
	\bigl\|\mathcal{F}^{\lambda,\varDelta}\mathcal{G}^{\lambda,\varDelta}\bigr\|^q_{Z(\mathfrak{I}^{\varpi,C_0}(Q^*)\cap
		\mathcal{C}^\lambda(z,r'))}\\ \le (1+C\varpi) \mathscr{A}^{\lambda}(R/C_0,r^\#,r')^q\,\,
	\EEE_{r,C_0Q}(F^\lambda,G^\lambda)^{q/q'}+\varpi^{-C} R^{-qN/2},
	\end{multline}
	and \eqref{OIU} follows by adjusting the constant $C$.
\end{proof}
\subsubsection{Step 3. Proof of Proposition \ref{pp_2}}
With Proposition \ref{pp:the noncon} and \ref{pp:the-con}, we may complete the proof of  Proposition \ref{pp_2}. For the nonconcentrated case $r\ge C_0\,{R}$,  letting $r'=C_0(r+1)$ and $\varpi=2^{-C_0}$  in Proposition \ref{pp:the noncon}, we are done.
In the high concentrated case, we follow \cite{TaoMZ} by letting $J$ be the smallest integer such that $r\ge 2^{-J}C_0 R$ and define
$
r:=r_0>r_1>\cdots>r_J
$
inductively by letting $r_{j+1}=r_j^\#$ which leads to $r_J=r+\O(Jr^{-\frac{1}{4N}})$.
Iterating Proposition \ref{pp:the-con} yields
\begin{multline*}
	\mathscr{A}^\lambda\bigl(2^{-j}R,r_j,C_0(r+1)\bigr)\\
	\le (1+C\varpi_j) \mathscr{A}^\lambda\bigl(2^{-(j+1)}R,r_{j+1},C_0(r+1)\bigr)
	+\varpi_j^{-C}\Bigl(1+\frac{R}{2^jr}\Bigr)^{-\theta}\quad\quad
\end{multline*}
with $\varpi_j=\varpi 2^{-(J-j)\theta/C_\circ}$ for some fixed large $C_\circ\gg C$ so that (see \cite[Section 9]{JLeeTAMS})
$$
\prod_{j=0}^{J}(1+C\varpi_j)\le e^{C\sum_{j=0}^J\varpi_j}\le 1+\widetilde{C}\varpi,\;\,\sum_{j=0}^{J}\varpi_j^{-C} 2^{-(J-j)\theta}\le \varpi^{-O(1)},
$$
where $\widetilde{C}$ is a universal constant.
Therefore, we arrive at
$$
\mathscr{A}^\lambda(R,r,C_0(r+1))\le (1+\widetilde{C}\varpi)\mathscr{A}^\lambda(2^{-J}R,r_J,C_0(r+1))+\varpi^{-O(1)}.
$$
Using Proposition \ref{pp:the noncon}, we are done by suitably adjusting the constant $C$.\qed

\section{End of the proof }\label{sect:pf-thm}

We start the induction by fixing a pair of red and blue waves $\mathscr{F}^\lambda\in\mathfrak{R}^\lambda_R,\, \mathscr{G}^\lambda\in\mathfrak{B}^\lambda_R$ with $\EEE(\mathscr{F}^\lambda)=\EEE(\mathscr{G}^\lambda)=1$. We are to show there is a universal constant $ C_*$ independent of $\mathscr{F}^\lambda$ and $\mathscr{G}^\lambda$ such that
$\|\mathscr{F}^\lambda \mathscr{G}^\lambda\|_{Z(Q^\lambda_R)}\le C_* \lambda^{1/q}$ holds for all $Q^\lambda_R$.
To this end, we will show there is a universal constant $\delta>0$ small depending only on $C_0$ and possibly some other structural constants, such that $$
\lambda^{-1/q}
\|\mathscr{F}^\lambda \mathscr{G}^\lambda\|_{Z(Q^\lambda_R)}\le (1-\delta)\overline{A}^\lambda(R)+2^{O(C_0)}
$$
holds for all $Q^\lambda_R$. Here $O(C_0)$ is a universal constant as well. Taking suprema with respect to $\mathscr{F}^\lambda,\mathscr{G}^\lambda$
satisfying the above conditions, we close the induction by  definition of $A^\lambda(R)$
and the monotonicity of $\overline{A}^\lambda(R)$ with respect to $R$ and $\lambda$.
\smallskip

Let $\mathscr{A}^{\lambda}$ be given by Definition \ref{def:AAA}. We first  prove the essential concentration of waves on the conic sets and then finish the proof of Theorem \ref{thm:main}.

\subsection{Essential concentration along conic regions}
The following property is for the use of the Kakeya compression property.

\begin{proposition}
	\label{pp_1}
	Let  $\mathscr{F}^\lambda\in\mathfrak{R}^\lambda_R$ and $\mathscr{G}^\lambda \in\mathfrak{B}^\lambda_R$  be the red and blue waves fixed at the beginning of this section with $\EEE(\mathscr{F}^\lambda)=\EEE(\mathscr{G}^\lambda)=1$.
	There exists a constant $C>0$ depending only on $n,\eps,q,s$ such that for any $R\in[ 2^{10C_0},\lambda]$ and $\delta\in(0,1/2)$,
	 if $Q^\lambda_R$ satisfies
	\begin{equation}
	\label{eq:crit}
  \lambda^{-1/q}	\|\mathscr{F}^\lambda\,\mathscr{G}^\lambda\|_{Z(Q^\lambda_R)}\ge \overline{A}^\lambda(R)/2,
	\end{equation}
	and we  let $r_\delta$ be the supremum of all radii $r\ge {C_0}$ such that
	\begin{equation}
	\label{eq:en-cr}
	\EEE_{r,C_0Q^\lambda_R}(\mathscr{F}^\lambda,\mathscr{G}^\lambda)\le 1-\delta
	\end{equation}
	holds and  let $r_\delta={C_0}$ if no such radius exists,
	then  there exists a cube $\widetilde{Q}^\lambda_{\overline{R}_\delta}$ 	of size  $\overline{R}_\delta\in[ 2^{C_0},R]$ and $z_\delta\in\R^{n+2}$  such that $\overline{R}_\delta^{1/2+4/N}\le r_\delta$ when $r_\delta\ge 2^{2C_0}$, and  we have
	\begin{equation}
	\label{eq:pp-1}
	\|\mathscr{F}^\lambda \mathscr{G}^\lambda\|_{Z(Q_R^{\lambda})}\le (1-C(\delta+C_0^{-C})^{q})^{-2/q}\|\mathscr{F}^\lambda \mathscr{G}^\lambda\|_{Z(\Omega_{\delta}^\lambda)}+\lambda^{1/q}\;2^{CC_0}\,,
	\end{equation}
	where
	$\Omega_{\delta}^\lambda:=\widetilde{Q}_{\overline{R}_\delta}^{\lambda}\cap \mathbf{ \Lambda}^\lambda_\delta$ with  $\mathbf{\Lambda}^\lambda_\delta=\mathcal{C}^\lambda(z_\delta,\,C_0(r_\delta+1))$.
\end{proposition}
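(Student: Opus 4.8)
The plan is to adapt the ``reduction to the concentrated case'' of Tao \cite[\S13]{TaoMZ} to the stretched $S^\lambda$-setting: extract a concentration point from the definition of $r_\delta$, localize both waves by $P_{D_\delta}$, and use the Kakeya compression together with Lemma \ref{lem:non-edpt} to squeeze the bilinear mass onto $\Omega^\lambda_\delta$. First I would unwind the definition of $r_\delta$ as a supremum: for every $r$ slightly above $r_\delta$ (and for $r=C_0$ in the degenerate case $r_\delta=C_0$) one has $E_{r,C_0Q^\lambda_R}(\mathscr{F}^\lambda,\mathscr{G}^\lambda)>1-\delta>\tfrac12$, so there is a disk $D$ of radius $r$ with $\|\mathscr{F}^\lambda\|_{L^2(D)}\|\mathscr{G}^\lambda\|_{L^2(D)}>1-\delta$; since $\|\mathscr{F}^\lambda(t)\|_{L^2(\R^{n+1})}=\|\mathscr{G}^\lambda(t)\|_{L^2(\R^{n+1})}=1$ for every $t$, each factor exceeds $1-\delta$, hence $\|\mathscr{F}^\lambda\|_{L^2(D^{\mathsf{ext}})}^2,\|\mathscr{G}^\lambda\|_{L^2(D^{\mathsf{ext}})}^2\lesssim\delta$. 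Taking $D_\delta$ to be the $C_0$-dilate of $D$ and $z_\delta$ its centre, \eqref{2.19} of Lemma \ref{lem:Tao10.2} gives $E\bigl((1-P_{D_\delta})\mathscr{F}^\lambda\bigr),E\bigl((1-P_{D_\delta})\mathscr{G}^\lambda\bigr)\lesssim\delta+R^{-N}$, while $P_{D_\delta}$ enlarges spacetime Fourier supports by at most $\O(r_\delta^{-1+1/N})\le R^{-1/N}$, so all four pieces of $\mathscr{F}^\lambda=P_{D_\delta}\mathscr{F}^\lambda+(1-P_{D_\delta})\mathscr{F}^\lambda$, $\mathscr{G}^\lambda=P_{D_\delta}\mathscr{G}^\lambda+(1-P_{D_\delta})\mathscr{G}^\lambda$ stay in $\mathfrak{R}^\lambda_R,\mathfrak{B}^\lambda_R$ after harmlessly weakening $N$. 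I would then take $\overline{R}_\delta$ to be the largest $\lambda$-stretched scale in $[2^{C_0},R]$ with $\overline{R}_\delta^{1/2+4/N}\le r_\delta$ (with $\overline{R}_\delta=2^{10C_0}$ in the vacuous range $r_\delta<2^{C_0}$ and $\overline{R}_\delta=R$ when already $r_\delta\ge R^{1/2+4/N}$), and $\widetilde{Q}^\lambda_{\overline{R}_\delta}$ a $\lambda$-stretched cube of that size centred at $z_\delta$.

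Next I would write $\mathscr{F}^\lambda\mathscr{G}^\lambda=(P_{D_\delta}\mathscr{F}^\lambda)(P_{D_\delta}\mathscr{G}^\lambda)$ plus three cross terms. The term $((1-P_{D_\delta})\mathscr{F}^\lambda)((1-P_{D_\delta})\mathscr{G}^\lambda)$ carries two small-energy factors, so \eqref{eq:bilinar-wolff} bounds it on all of $Q^\lambda_R$ by $\lesssim A^\lambda(R)\lambda^{1/q}\delta$, which by \eqref{eq:crit} and $A^\lambda(R)\le\overline{A}^\lambda(R)\le 2\lambda^{-1/q}\|\mathscr{F}^\lambda\mathscr{G}^\lambda\|_{Z(Q^\lambda_R)}$ is $\lesssim\delta\,\|\mathscr{F}^\lambda\mathscr{G}^\lambda\|_{Z(Q^\lambda_R)}$. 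For the main term and the two one-sided cross terms I only need their size on $Q^\lambda_R\setminus\Omega^\lambda_\delta$, and I would split this set into the part near $z_\delta$ lying off $\mathbf{\Lambda}^\lambda_\delta=\mathcal{C}^\lambda(z_\delta,C_0(r_\delta+1))$ and the part far from $z_\delta$. On the former I would invoke Lemma \ref{lem:conctr}: $P_{D_\delta}\mathscr{F}^\lambda$ and $P_{D_\delta}\mathscr{G}^\lambda$ concentrate on the red, resp.\ blue, cones $\mathbf{\Lambda}^\lambda_1(z_\delta,C_0 r_\delta),\mathbf{\Lambda}^\lambda_2(z_\delta,C_0 r_\delta)\subset\mathbf{\Lambda}^\lambda_\delta$ --- this is precisely the Kakeya compression --- so off $\mathbf{\Lambda}^\lambda_\delta$ any $P_{D_\delta}(\cdot)$-factor is $\lesssim_M C_0^{-M}r_\delta^{c_0-M/2N}$ for a structural $c_0$; multiplying by the $Z$-volume $\lesssim\lambda^{1/q}\overline{R}_\delta^{\O(1)}\lesssim\lambda^{1/q}r_\delta^{\O(1)}$ of the relevant region near $z_\delta$ and taking $M$ large enough that the $r_\delta$-exponent becomes negative (whence, as $r_\delta\ge C_0$, the power is $\le1$), this contribution is $\lesssim\lambda^{1/q}2^{\O(C_0)}$; when $\overline{R}_\delta=R$ one instead uses $r_\delta^{-M/2N}\le R^{-M/4N}$ to absorb the full $Z$-volume of $Q^\lambda_R$.

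The delicate part is the far field $Q^\lambda_R\setminus\widetilde{Q}^\lambda_{\overline{R}_\delta}$: a crude pointwise bound is useless there, since the product of two $\O(1)$-bounded waves living in the Euclidean ball $B\bigl(z_\delta,\O(\lambda r_\delta)\bigr)$ would cost a power of $\lambda$. Here I would cover the set by $\O(\log R)$ dyadic $\lambda$-stretched annuli $Q^{\mathsf{ann}}(z_\delta;\rho_k,2\rho_k)$ with $\rho_k\gtrsim\overline{R}_\delta$ and apply Lemma \ref{lem:non-edpt} with $D=D_\delta$ --- legitimate precisely because the choice of $\overline{R}_\delta$ forces $r_\delta\lesssim\rho_k^{1/2+4/N}$, and because the pieces $P_{D_\delta}\mathscr{F}^\lambda,P_{D_\delta}\mathscr{G}^\lambda$ have energy $\le1$ and $(1-P_{D_\delta})\mathscr{F}^\lambda,(1-P_{D_\delta})\mathscr{G}^\lambda$ have energy $\lesssim\delta$ --- to get $\|(P_{D_\delta}\mathscr{F}^\lambda)(P_{D_\delta}\mathscr{G}^\lambda)\|_{Z(Q^{\mathsf{ann}}(z_\delta;\rho_k,2\rho_k))}\lesssim 2^{\O(C_0)}\lambda^{1/q}\rho_k^{-b}$, and the same with an extra $\delta^{1/2}$ for the two one-sided cross terms; summing $q$th powers via Lemma \ref{lem:Te} yields $\lesssim 2^{\O(C_0)}\lambda^{1/q}\overline{R}_\delta^{-b}\le\lambda^{1/q}2^{\O(C_0)}$. (The bounded band of radii near $\overline{R}_\delta$ on which Lemma \ref{lem:non-edpt} just fails --- a discreteness artifact --- is absorbed either by replacing $4/N$ with $3/N$ to leave room, or by the rapid decay of the previous paragraph.) Combining everything with Lemma \ref{lem:Te},
\[
\|\mathscr{F}^\lambda\mathscr{G}^\lambda\|_{Z(Q^\lambda_R)}^q\le\|\mathscr{F}^\lambda\mathscr{G}^\lambda\|_{Z(\Omega^\lambda_\delta)}^q+C(\delta+C_0^{-C})^q\,\|\mathscr{F}^\lambda\mathscr{G}^\lambda\|_{Z(Q^\lambda_R)}^q+C\lambda\,2^{qCC_0},
\]
the $C_0^{-C}$ collecting the $2^{-C_0}$-type losses from applying $A^\lambda$ to the $P_{D_\delta}$-modified waves; rearranging and taking $q$th roots gives \eqref{eq:pp-1} (the exponent $-2/q$ there being generous, since the argument actually produces $-1/q$). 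In short, the one genuine obstacle is the far field, and it is exactly what dictates the coupling $\overline{R}_\delta^{1/2+\O(1/N)}\le r_\delta$ in the statement: without the non-endpoint annular estimate of Lemma \ref{lem:non-edpt} the mass outside $\widetilde{Q}^\lambda_{\overline{R}_\delta}$ cannot be discarded.
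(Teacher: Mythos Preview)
Your overall strategy matches the paper's: extract a concentration disk from the definition of $r_\delta$, localize via $P_D$, use Lemma~\ref{lem:conctr} (the Kakeya compression) to kill the near-field-off-cone contribution, and Lemma~\ref{lem:non-edpt} on dyadic annuli for the far field. Your identification of the far field as the one place that forces the coupling $\overline{R}_\delta^{1/2+4/N}\le r_\delta$ is exactly right.

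There is, however, one genuine gap. You assert that $P_{D_\delta}$ enlarges Fourier supports by $\O(r_\delta^{-1+1/N})\le R^{-1/N}$, so that all four pieces remain in $\mathfrak{R}^\lambda_R,\mathfrak{B}^\lambda_R$, and then apply \eqref{eq:bilinar-wolff} to the doubly-exterior term $(1-P_{D_\delta})\mathscr{F}^\lambda\,(1-P_{D_\delta})\mathscr{G}^\lambda$ on all of $Q^\lambda_R$. But the inequality $r_\delta^{-1+1/N}\le R^{-1/N}$ is equivalent to $r_\delta^{\,N-1}\ge R$; in the high-concentration regime $r_\delta$ can be as small as $C_0$ while $R$ ranges up to $\lambda^2$, and the margin condition relative to scale $R$ is then genuinely violated, so $A^\lambda(R)$ cannot be invoked. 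The paper flags this explicitly: in the case $r_\delta\le R^{1/2+4/N}$ it writes that $\mathring{\mathscr{F}}^\lambda,\mathring{\mathscr{G}}^\lambda$ ``are red and blue waves without the relaxed margin conditions required in $\mathfrak{R}^\lambda_R$ and $\mathfrak{B}^\lambda_R$'' and that one ``can not apply the inductive argument as in the case when $r_\delta\ge R^{1/2+4/N}$.'' Its remedy is a Galilean transform plus a mild rescaling of the frequency variables to pull the supports back into the admissible region, together with a duality argument to cope with the mixed norm, after which $A^\lambda(R)$ can be applied at the cost only of a fixed universal constant and the additive $\lambda^{1/q}2^{\O(C_0)}$. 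This is not bookkeeping: it is the mechanism that converts the small \emph{exterior energy} into a legitimate use of the inductive hypothesis when the localization scale $r_\delta$ is far below $R^{1/2}$. Your sketch needs this step (or an equivalent one) for the doubly-exterior term in the high-concentration case; in the medium/low case $r_\delta\ge R^{1/2+4/N}$ your margin claim is correct and the argument goes through essentially as you wrote it.
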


The proof is divided into two cases: $r_\delta\ge R^{1/2+4/N}$ and $r_\delta\le R^{1/2+4/N}$ which are treated in the following two subsubsections.

\subsubsection{The medium or low concentration case: $r_\delta\ge R^{1/2+4/N}$}
In this case, we show there is a constant $C$ such that for some $z_\delta\in\R^{n+2}$, we have
\begin{equation}
\label{eq:low-med}
\bigl\|\mathscr{F}^\lambda \mathscr{G}^\lambda\bigr\|_{Z(Q_R^{\lambda})}
\le \bigl(1-C(\delta+C_0^{-C})^q\bigr)^{-1/q}\bigl\|\mathscr{F}^\lambda \mathscr{G}^\lambda\bigr\|_{Z(Q_R^{\lambda}\cap \mathbf{ \Lambda}^\lambda_\delta)}
\end{equation}
holds with $C$ independent of $Q^\lambda_R$ and $z_\delta$. In particular, we have in this case $\overline{R}_\delta=R$ and $\widetilde{Q}^\lambda_{\overline{R}_\delta}=Q^\lambda_R$.\medskip

By definition, there is $D_\delta=D(z_\delta,r_\delta)$ with  $z_\delta=(\xxx_0,t_0)$ and $t_0\in\mathsf{LS}(C_0Q_R^\lambda)$, such that we have
\begin{equation}
\label{eq:sup}
\min\Bigl(\|\mathscr{F}^\lambda\|_{L^2(D_\delta)}^2, \|\mathscr{G}^\lambda\|_{L^2(D_\delta)}^2\Bigr)\ge 1-2\delta\,.
\end{equation}
Let $D^\natural=C_0^{1/2}D_\delta=D(z_\delta,\,C_0^{1/2}r_\delta)$ and write
$$
\mathscr{F}^\lambda=P_{D^\natural}\mathscr{F}^\lambda+(1-P_{D^\natural})\mathscr{F}^\lambda,\;\;
\mathscr{G}^\lambda=P_{D^\natural}\mathscr{G}^\lambda+(1-P_{D^\natural})\mathscr{G}^\lambda.
$$
By using Lemma \ref{lem:Te} and the  condition $1\le \overline{A}^\lambda(R)\le 2\,\lambda^{-\frac{1}{q}}\|\mathscr{F}^\lambda \mathscr{G}^\lambda\|_{Z(Q^{\lambda}_R)}$, it suffices to show for some universal constant $C>0$, we have
\begin{equation}
\label{eq:extr-1}
\|(P_{D^\natural}\mathscr{F}^\lambda ) \,\mathscr{G}^\lambda\|_{Z(Q_R^{\lambda}\setminus\mathbf{ \Lambda}^\lambda_\delta)}\lesssim C_0^{-C}\lambda^{1/q},
\end{equation}
\begin{equation}
\label{eq:extr-2}
\|(1-P_{D^\natural})\mathscr{F}^\lambda\, P_{D^\natural}\mathscr{G}^\lambda\|_{Z(Q_R^{\lambda}\setminus \mathbf{ \Lambda}^\lambda_\delta)}\lesssim C_0^{-C}\lambda^{1/q},
\end{equation}
and
\begin{equation}
\label{eq:extr-3}
\|(1-P_{D^\natural})\mathscr{F}^\lambda \, (1-P_{D^\natural})\mathscr{G}^\lambda\|_{Z(Q_R^{\lambda})}\lesssim (\delta+C_0^{-C})\;\lambda^{\frac{1}{q}}\;\overline{A}^\lambda(R).
\end{equation}

The proofs of \eqref{eq:extr-1} and \eqref{eq:extr-2} are the same. To see that these estimates are true, by using the energy estimate, we are reduced to
\begin{equation}
\label{eq:PP-huygens}
\|P_{D^\natural}\mathscr{F}^\lambda\|_{L^\infty(Q_R^{\lambda}\setminus \mathbf{ \Lambda}^\lambda_{\delta})}\lesssim_N R^{-N/2},\, \|P_{D^\natural}\mathscr{G}^\lambda\|_{L^\infty(Q_R^{\lambda}\setminus \mathbf{ \Lambda}^\lambda_{\delta})}\lesssim_N R^{-N/2},
\end{equation}
which is obvious in view of Lemma \ref{lem:conctr} and  $r_\delta\ge R^{1/2+4/N}$.

To show \eqref{eq:extr-3}, we use the induction argument. Note that by using \eqref{eq:en-cr}, \eqref{2.19}, \eqref{2.20} and the assumption on $r_\delta$, we have
$$
\EEE\bigl((1-P_{D^\natural})\mathscr{F}^\lambda\bigr)\lesssim \delta+R^{-N/2},\; \;\EEE\bigl((1-P_{D^\natural})\mathscr{G}^\lambda\bigr)\lesssim \delta+R^{-N/2}.
$$
It is easy to verify that we have $(1-P_{D^\natural})\mathscr{F}^\lambda\in\mathfrak{R}^\lambda_{R'}$ and $(1-P_{D^\natural})\mathscr{G}^\lambda\in\mathfrak{B}^\lambda_{R'}$ with $R'=\frac{R}{(1+o(1))}$. In fact, $R'=R (1+\frac{50}{2^{5C_0}})^{-N}$ will do the job. This yields \eqref{eq:extr-3} by finitely partitioning $Q_R^\lambda$ and using the definition of $A^\lambda(R')$ and the monotonicity of $\overline{A}^\lambda(R)$.
The proof is complete for this case.

\subsubsection{The high concentration case: $r_\delta\le  R^{1/2+4/N}$}
We turn to the case where the blue and red waves are highly concentrated.  Define $$\overline{R}_\delta=\max\Bigl(2^{2C_0},\;r_\delta^{1/(1/2+4/N)}\Bigr).$$

Consider the case $\overline{R}_\delta>2^{2C_0}$. In this case, we necessarily have $r_\delta>2^{C_0}$ and there is $z_\delta$ such that we have \eqref{eq:sup}.
Let $\widetilde{Q}=Q^{z_\delta,\lambda}_{\overline{R}_\delta}$ be the $\lambda-$stretched cube of size $\overline{R}_\delta$ centered at $z_\delta$.
By  splitting $Q_R^\lambda=\bigl(Q^\lambda_R\cap \widetilde{Q}\bigr)\cup\bigl(Q^\lambda_R\setminus \widetilde{Q}\bigr)$ and using Lemma \ref{lem:Te}, we have
$$
\bigl\|\mathscr{F}^\lambda \mathscr{G}^\lambda\bigr\|_{Z(Q_R^{\lambda})}^q
\le \bigl\|\mathscr{F}^\lambda \mathscr{G}^\lambda\bigr\|_{Z(\widetilde{Q})}^q
+\bigl\|\mathscr{F}^\lambda \mathscr{G}^\lambda\bigr\|_{Z(Q_R^{\lambda}\setminus \widetilde{Q})}^q.
$$

For the first term on $\widetilde{Q}$, the argument as in the medium or low concentration case leads to an estimate of the form \eqref{eq:low-med} with $Q^\lambda_R$ there replaced by $\widetilde{Q}$.

For the second term,
write as before
$$
\mathscr{F}^\lambda \mathscr{G}^\lambda=\underbrace{\bigl((P_{D^\natural}\mathscr{F}^\lambda)\mathscr{G}^\lambda\bigr)}_{:=I}
+\underbrace{\bigl((1-P_{D^\natural})\mathscr{F}^\lambda \; P_{D^\natural}\mathscr{G}^\lambda\bigr)}_{:=II}+
\underbrace{\bigl((1-P_{D^\natural})\mathscr{F}^\lambda\,(1-P_{D^\natural})\mathscr{G}^\lambda\bigr)}_{:=III}
$$
For $I$ and $II$, dyadic decomposing $Q^\lambda_R\setminus \widetilde{Q}$
into annuli around $z_\delta$ of the form $Q^{\mathsf{ann}}(z_\delta; 2^{j},2^{j+1})$ with $2^j\gtrsim \overline{R}_\delta$. Taking $C_0$ large and applying Lemma \ref{lem:non-edpt} then summing over dyadic $2^{-jb}$, we are done.

It remains to handle the $III-$term.
Denote
$$
\mathring{\mathscr{F}}^\lambda=(1-P_{D^\natural})\mathscr{F}^\lambda,\quad \mathring{\mathscr{G}}^\lambda=(1-P_{D^\natural})\mathscr{G}^\lambda\,.
$$
Note that $\mathring{\mathscr{F}}^\lambda, \mathring{\mathscr{G}}^\lambda$ are red and blue waves without the relaxed margin conditions required in $\mathfrak{R}^\lambda_R$ and $\mathfrak{B}^\lambda_R$. Thus, we can not apply the inductive argument as in the case when $r_\delta\ge R^{1/2+4/N}$. However, we may use the smallness of the exterior energy of $\mathring{\mathscr{F}}^\lambda, \mathring{\mathscr{G}}^\lambda$ from the definition of $r_\delta$ and a non-optimal estimate in terms of $A^\lambda(R)$. To this end, one needs to apply Galilean  transforms sending the $\xi-$variables to a neighborhood of the orgin and a mild scaling so that by modifying the input functions, they meets the required  margin conditions. To deal with the mixed-norm where Galilean transform does not directly apply, one needs to apply the duality argument and also covering $C_0Q^\lambda_R$ by a larger cube of the same shape
$CC_0Q^\lambda_R$  for some fixed large $C$, then partition it into $\O(1)$ many cubes of size $R$, so that we can apply the definiton of $A^\lambda(R)$.
Taking the inverse transform and affording a fixed universal constant, we have
\begin{equation}
\label{eq:ext-en-ind}
\|III\|_{Z(Q^\lambda_R)}\lesssim\;\delta \lambda^{1/q} \overline{A}^\lambda(R)+\lambda^{1/q} 2^{O(C_0)}
\end{equation}
Thus, by using the \eqref{eq:crit} condition
$$\|III\|_{Z(Q^\lambda_R)}\lesssim\;\delta \|\mathscr{F}^\lambda \mathscr{G}^\lambda\|_{Z(Q^\lambda_R)}+\lambda^{1/q} 2^{O(C_0)},$$
Plugging this back we are done.

During this process when using Galilean transform, the phase function is somehow distorted, however, this obstacle can be overcome by using the same argument as done for the proof of Lemma \ref{lem:wp-d}, so that the error term of the phase function can be handled using the standard trick  in \cite{TV-1,TV-2},  by using Taylor expansions  switching to the discretized version, applying the inductive argument based on $A^\lambda(R)$  and then summing over the absolutely convergent series. This is a tedious but very standard procedure, we refer to \cite[Section 5]{TV-2}, or Appendex \ref{sec:ext-en}  for an outline of the argument. It is because of this term $\|III\|_{Z(Q^\lambda_R)}$ that we need the condition $R\le \lambda$.\medskip

It remains to consider the case $\overline{R}_\delta=2^{2C_0}$. In this case, the energy is concentrated in a scale $\le 2^{C_0}$, we use the same argument as above using the trivial energy estimate for $\|\mathscr{F}^\lambda\mathscr{G}^\lambda\|_{Z(\widetilde{Q})}\lesssim \lambda^{1/q}2^{O(C_0)}$.\medskip

Collecting all these estimates, we obtain \eqref{eq:pp-1} and the proof of Proposition \ref{pp_1} is complete. \qed

\subsection{Proof of Theorem \ref{thm:main}}

 We are ready to show Theorem \ref{thm:main}. Let $C_1$ and $C_2$ be the structural constants given by Proposition \ref{pp_1} and Proposition  \ref{pp_2} respectively.  We may take $C_1$ large so that $C_1\ge 10000n$. Next, we take  $\delta=C_0^{-C_1/100}$.

 Let $\mathscr{F}^\lambda\in\mathfrak{R}^\lambda_R$,  $\mathscr{G}^\lambda \in\mathfrak{B}^\lambda_R$.
  For any $\lambda-$stretched spacetime cube $Q^\lambda_R$, if it satisfies the condition \eqref{eq:crit}, we let  $z_\delta, r_\delta,D_\delta$ be given by Proposition \ref{pp_1}. If $\overline{R}_\delta>2^{C_0}$, then by using  the definition of $\mathscr{A}^{\lambda}(\overline{R}_\delta,r_\delta, C_0(r_\delta+1))$ and \eqref{eq:pp-1}, we get
  \begin{multline*}
  	\lambda^{-1/q}\|\mathscr{F}^\lambda \mathscr{G}^\lambda \|_{Z(Q^{\lambda}_R)}\\
  	\le \bigl(1-C_1(\delta+C_0^{-C_1})^{q}\bigr)^{-2/q}
  	\mathscr{A}^{\lambda}(\overline{R}_\delta,r_\delta,C_0(r_\delta+1))\, \EEE_{r_\delta,C_0Q^\lambda_R}(\mathscr{F}^\lambda,\mathscr{G}^\lambda)^{1/q'}+2^{O(C_0)},
  	\end{multline*}
which entails by Proposition \ref{pp_2} and  \eqref{eq:en-cr}
\begin{multline*}
\lambda^{-1/q}\|\mathscr{F}^\lambda  \mathscr{G}^\lambda\|_{Z(Q^{\lambda}_R)}\\
\le \bigl(1-C_1(\delta+C_0^{-C_1})^{q}\bigr)^{-2/q} (1-\delta)^{1/q'}\Bigl((1+C_22^{-C_0})\overline{A}^{\lambda}(R)+2^{C_2C_0}\Bigr)+2^{O(C_0)}.	
	\end{multline*}
Using  $q>1$,  and taking $C_0$ large if necessary (depending only on $q, C_1$), one  has  $\exists\;\delta_\blacktriangle\in (0,1/10)$  and  $0<C_\blacktriangle<\infty$, depending only on $C_0,C_1,C_2$ and $q$, such that
$$
\lambda^{-1/q}
\|\mathscr{F}^\lambda \mathscr{G}^\lambda\|_{Z(Q_R^{\lambda})}\le (1-\delta_\blacktriangle)\,\overline{A}^\lambda(R)+C_\blacktriangle.
$$

If $\overline{R}_\delta=2^{C_0}$, then we have
the trivial estimate $\|\mathscr{F}^\lambda\mathscr{G}^\lambda\|_{Z({Q}^\lambda_R)}\le \lambda^{1/q}\, C_\blacktriangle$ by recalling the proof in the last subsection.

Thus, we have
\begin{multline*}
\max_{Q^\lambda_R}\lambda^{-1/q}
\|\mathscr{F}^\lambda \mathscr{G}^\lambda\|_{Z(Q_R^{\lambda})}\\
\le
\max\Bigl\{\max_{Q^\lambda_R: \eqref{eq:crit} \text{holds}}\lambda^{-1/q}
\|\mathscr{F}^\lambda \mathscr{G}^\lambda\|_{Z(Q_R^{\lambda})}, \max_{Q^\lambda_R: \eqref{eq:crit}\text{ fails}}\lambda^{-1/q}
\|\mathscr{F}^\lambda \mathscr{G}^\lambda\|_{Z(Q_R^{\lambda})}\Bigr\}+C_\blacktriangle\\
\le\max\Bigl\{(1-\delta_\blacktriangle)\overline{A}^\lambda(R)+{C}_\blacktriangle,\;\frac{1}{2}\,\overline{A}^\lambda(R)\Bigr\}+C_\blacktriangle\\
\le (1-\delta_\blacktriangle)\overline{A}^\lambda(R)+2{C}_\blacktriangle\,\quad.\qquad\qquad\,
\end{multline*}

Since the right side is independent of $\mathscr{F}^\lambda\in\mathfrak{R}^\lambda_R$ and $\mathscr{G}^\lambda\in \mathfrak{B}^\lambda_R$,
we  get
$$
A^\lambda(R)\le (1-\delta_\blacktriangle)\overline{A}^\lambda(R)+2{C}_\blacktriangle.
$$
Taking suprema, we obtain
$$
\overline{A}^\lambda(R)\le 2 \delta_\blacktriangle^{-1}{C}_\blacktriangle.
$$
\smallskip

Finally, fix $\eta\in \mathcal{S}(\R)$ with $\supp \;\eta\subset[-1,1]$ so that for any $f_1$ and $f_2$ being test functions supported in $V_1$ and $V_2$,   if we let $\tilde{f}_j(\xi,s)=f_j(\xi)\eta(s)$, then for $R\ge 2^{100 C_0}$
$$
F_j(x,x_{n+1},t):=\iint e^{2\pi i (x\cdot\xi+x_{n+1}s-\frac{t}{2}\frac{|\xi|^2}{R+s})}\tilde{f}_j(\xi,s)d\xi ds,
$$
satisfies the conditions in $\mathfrak{R}^{R}_R$ and $\mathfrak{B}^{R}_R$ for $j=1,2$ respectively.\medskip

Applying the uniform estimate on $A^{R}(R)$ to $F_1$ and $F_2$, we get
$$\|F_1F_2\|_{Z(Q_R^{R})}\lesssim_{\eta}R^{1/q} \|f_1\|_2\|f_2\|_2.$$
Changing variables $t\to  R\,t$  and letting $R\to +\infty$, we get \eqref{eq:B-parab} by using Lebesgue's dominated convergence  and then Fatou's theorem followed with integrating  $x_{n+1}$ out, c.f. \cite{TaoMZ}.
The proof is complete.\qed

\appendix

\section{On the locally constant property for the plate maximal function: proof of \eqref{eq:max-plate}}\label{sec:max}
We follow the standard argument in \cite{TaoGFA,MA}.

If $r\ge C $,  noting that
$
(\nu,k)+\mathcal{R}^{\varpi,\varrho}_r\subset \xxx+C\mathcal{R}^{\varpi,\varrho}_{Cr}
$ whenever $\xxx\in (\nu,k)+C \mathcal{R}^{\varpi,\varrho}_1$, the average of $f_\mu $ on $(\nu,k)+\mathcal{R}_r^{\varpi,\varrho}$ is bounded by  $\lesssim_C\mathcal{M}^{\varpi,\varrho}f_\mu(\xxx)$.

Next, we assume $0<r\le C$. Note that the Fourier transform of $f_\mu$
is supported in the set $\{(\xi,s); \,|\xi-\mu|\le 10n\varrho^{-1},\;|s|\le 10\}$. Let $p(\xi,s)$ be as in the proof of Lemma \ref{lem:wp-d}. Then,
using the reproducing formula $\wh{f}_\mu(\xi,s)=p(\varrho(\xi-\mu),s)\wh{f}_\mu(\xi,s)$ and taking inverse Fourier transform, we have
$f_\mu=\mathbf{\Psi}_{\mu,\varrho}*f_\mu$ where
$$
\mathbf{\Psi}_{\mu,\varrho}(\xxx)=\varrho^{-n}p^\vee(\varrho^{-1}x,x_{n+1})e^{2\pi ix\cdot\mu},\quad \xxx=(x,x_{n+1}).
$$
By Minkowski inequality, the average of $f_\mu$
over $(\nu,k)+\mathcal{R}^{\varpi,\varrho}_r$ is bounded by
\begin{equation}
\label{eq:OPJ}
(\varpi^{-2}\varrho)^{-n}r^{-(n+1)}
\int |f_\mu(\xxx')| \int_{(\nu,k)+\mathcal{R}^{\varpi,\varrho}_r}\Bigl|\mathbf{ \Psi}_{\mu,\varrho}(\xxx''-\xxx')\Bigr|d\xxx'' d\xxx'.
\end{equation}
Splitting
$$
\R^{n+1}_{\xxx'}=\bigl(\xxx+\mathcal{R}^{\varpi,\varrho}_1\bigr)\cup\bigcup_{k\ge 1}\Bigl(\xxx+\mathcal{R}^{\varpi,\varrho}_{2^k }\setminus \mathcal{R}^{\varpi,\varrho}_{2^{k-1}}\Bigr),
$$
we have $\eqref{eq:OPJ}\le \sum_{k\ge 0}I_k$, where
$$
I_0=(\varpi^{-2}\varrho)^{-n}r^{-(n+1)}
\int_{\xxx+\mathcal{R}^{\varpi,\varrho}_1} |f_\mu(\xxx')| \int_{(\nu,k)+\mathcal{R}^{\varpi,\varrho}_r}\Bigl|\mathbf{ \Psi}_{\mu,\varrho}(\xxx''-\xxx')\Bigr|d\xxx'' d\xxx',
$$
and for $k\ge 1$
$$
I_k=(\varpi^{-2}\varrho)^{-n}r^{-(n+1)}
\int_{\xxx+\mathcal{R}^{\varpi,\varrho}_{2^k }\setminus \mathcal{R}^{\varpi,\varrho}_{2^{k-1}}} |f_\mu(\xxx')| \int_{(\nu,k)+\mathcal{R}^{\varpi,\varrho}_r}\Bigl|\mathbf{ \Psi}_{\mu,\varrho}(\xxx''-\xxx')\Bigr|d\xxx'' d\xxx'\,.
$$

For $\sum_{0\le k\le \varpi^{-O(1)}} I_k$, we use the boundedness of $p^\vee$  to get
$$
\sum_{0\le k\le \varpi^{-O(1)}} I_k
\lesssim \varpi^{-O(1)}\mathcal{M}^{\varpi,\varrho}f_\mu(\xxx),
$$

For $\sum_{k\ge \varpi^{-O(1)}}I_k$.
By using the elementary identity of sets
$$(A_1\times B_1)\setminus (A_2\times B_2)=\bigl[(A_1\setminus A_2)\times B_1\bigr]\cup
\bigl[(A_1\cap A_2)\times(B_1\setminus B_2)\bigr],$$
and $r\le C$, we have  that for any $k\ge \varpi^{-O(1)}$, consider $\xxx''\in(\nu,k)+\mathcal{R}^{\varpi,\varrho}_r$ and
$\xxx'\in \xxx+\mathcal{R}^{\varpi,\varrho}_{2^k }\setminus \mathcal{R}^{\varpi,\varrho}_{2^{k-1}}$,
then either
\begin{multline*}
\varrho^{-1}|x''-x'|\ge \varrho^{-1}|x-x'|-\varrho^{-1}|x-\nu|-\varrho^{-1}|x''-\nu|\\
\ge 2^{k-1}\varpi^{-2}-C\varpi^{-2}-Cr\varpi^{-2}\ge \varpi^{-2}(2^{\varpi^{-O(1)}-1}-2(C+1)^2)\ge C \varpi^{-O(1)}
\end{multline*}
or
\begin{multline*}
|x_{n+1}''-x_{n+1}'|\ge |x_{n+1}-x'_{n+1}|-|x_{n+1}-k|-|x''_{n+1}-k|\\
\ge 2^{k-1}-C-C^2\ge \varpi^{-O(1)}-2(C+1)^2\ge C
\end{multline*}
by taking $C_0$ large if necessary.
Thus, by refining the above two estimates replacing the lower bound with $\gtrsim 2^k$, we have
$$\varrho^{-1}|x''-x'|+|x_{n+1}''-x_{n+1}'|\gtrsim 2^k,\quad\forall\;k\ge \varpi^{-O(1)}.$$
Hence
$$
I_k\lesssim_M 2^{-kM}
\varrho^{-n}
\int_{\xxx+\mathcal{R}^{\varpi,\varrho}_{2^k }\setminus \mathcal{R}^{\varpi,\varrho}_{2^{k-1}}} |f_\mu(\xxx')| \;d\xxx'\lesssim \varpi^{-O(1)} 2^{-k}\mathcal{M}^{\varpi,\varrho}f_\mu(\xxx)\,.
$$
Summing up $k\ge \varpi^{-O(1)}$, we are done.
\smallskip
Combining the above two cases and taking suprema, we complete the proof of \eqref{eq:max-plate}.

\section{The exterior energy estimate}\label{sec:ext-en}
We outline the  proof for the exterior energy induction \eqref{eq:ext-en-ind}, which one may compare with that for the formula (48) P. 239 of \cite{TaoMZ}.
By translation invariance, we take $t_{Q^\lambda_R}=0$.

Let $\mathfrak{r}:=(C_0^{1/2}r_\delta)^{-(1-\frac{1}{N})}+20 R^{-1}.$
By direct computation and $\lambda\ge R$, we have
$$
\mathsf{marg}(\mathring{\mathscr{F}}^\lambda)\ge \mathsf{marg}(\mathscr{F}^\lambda)-\mathfrak{r},\quad
\mathsf{marg}(\mathring{\mathscr{G}}^\lambda)\ge \mathsf{marg}(\mathscr{G}^\lambda)-\mathfrak{r}.
$$
Let  $
\mathfrak{r}'=
(200n)^{-1}+2R^{-\frac{1}{N}}+\mathfrak{r}.
$ and define
$$\mathfrak{O}_1=\Bigl\{\xi\in\R^n: \bigl|\xi-e_1\bigr|\le \mathfrak{r}' \Bigr\},\quad 
\mathfrak{O}_2=\Bigl\{\xi\in\R^n: |\xi|\le \mathfrak{r}'\Bigr\}\,.$$
Denote $\mathfrak{d}=2(1+R^{-\frac{1}{N}})-(100n)^{-1}$ and $\mathfrak{d}'=\mathfrak{d}+\mathfrak{r}$.
Then, we have
\begin{align*}
	\supp(\widehat{\mathring{\mathscr{F}}^\lambda})
	&\subset \Bigl\{(\xi,s,\tau)\,:\,\tau=-\frac{|\xi|^2}{2(\lambda+s)},\,\xi\in\mathfrak{O}_1,\,|s|\le \mathfrak{d}'\Bigr\},\\
	\supp(\widehat{\mathring{\mathscr{G}}^\lambda})&
	\subset \Bigl\{(\xi,s,\tau)\,:\,\tau=-\frac{|\xi|^2}{2(\lambda+s)},\,\xi\in\mathfrak{O}_2,\,|s|\le \mathfrak{d}'\Bigr\}\,.
\end{align*}
Let $\mathfrak{O}_1^\flat=\Bigl\{\xi\in\R^n:|\xi-e_1|\le\mathfrak{r}'-2\mathfrak{r}\Bigr\}$ and $\Gamma$ be the conic set such that $\mathfrak{O}_1^\flat\subset\Gamma$ with the boundary of $\mathfrak{O}_1^\flat$ being tangent to that of $\Gamma$.
Decomposing $\mathfrak{O}_1=\mathfrak{O}_1'\cup\mathfrak{O}''_1$
with $\mathfrak{O}_1'=\mathfrak{O}_1\cap\Gamma \cap\{\xi:|\xi|\ge 1\} $
and $\mathfrak{O}_1''=\mathfrak{O}_1\setminus\mathfrak{O}_1'$,
we may write correspondingly 
$$
\mathring{\mathscr{F}}^\lambda \mathring{\mathscr{G}}^\lambda=\mathring{\mathscr{F}}^{\lambda,'}\mathring{\mathscr{G}}^\lambda+\mathring{\mathscr{F}}^{\lambda,''}\mathring{\mathscr{G}}^\lambda,
$$
where $\,\widehat{{\mathring{\mathscr{F}}^{\lambda,'}}}$ is supported in the lift of $\mathfrak{O}_1'\times [-\mathfrak{d}',\mathfrak{d}']$ to $\varSigma^\lambda$ and likewise for $\mathring{\mathscr{F}}^{\lambda,''}$.

To handle the first term, we  let
$$\varkappa_1=\frac{1+\mathfrak{r}'-\mathfrak{r}}{1+\mathfrak{r}'},\quad\;\varkappa_2=\frac{\mathfrak{d}}{\mathfrak{d}'}\,,$$
and make change of variables $(\xi,s)\to \bigl(\varkappa_1^{-1}\xi, \varkappa_2^{-1}\,s\bigr)$ for $\mathring{\mathscr{F}}^{\lambda,'}\mathring{\mathscr{G}}^\lambda$ to meet the margin condition in the new variables after modifying the initial data. The result follows from the inductive hypothesis.

To handle the second term, we decompose $\mathfrak{O}_1''=\overline{\mathfrak{O}''}_1\cup \overline{\overline{\mathfrak{O}''}}_1$ where $\overline{\mathfrak{O}''}_1=\mathfrak{O}''_1\setminus\Gamma$ and $\overline{\overline{\mathfrak{O}''}}_1=\mathfrak{O}_1''\setminus\overline{\mathfrak{O}''_1}$. Write correspondingly
$$
\mathring{\mathscr{F}}^{\lambda,''}\mathring{\mathscr{G}}^\lambda=\mathring{\mathscr{F}}^{\overline{\lambda,''}}\mathring{\mathscr{G}}^\lambda+\mathring{\mathscr{F}}^{\overline{\overline{\lambda,''}}}\mathring{\mathscr{G}}^\lambda.
$$
For the first term, we partition $\overline{\mathfrak{O}''_1}$ into  the union  of $\mathcal{O}(n)$ many sectors $\Delta$, \emph{i.e.} $\overline{\mathfrak{O}''}_1=\cup_\Delta \Delta$ and   write 
$$\mathring{\mathscr{F}}^{\overline{\lambda,''}}\mathring{\mathscr{G}}^\lambda=\sum_\Delta \mathring{\mathscr{F}}^{\overline{\lambda,''}}_\Delta\mathring{\mathscr{G}}^\lambda.$$
For each $\Delta$,  we rotate $\Delta$ to $\Delta'$ such that $\Delta'$ is centered at $e_1$ so that after doing this rotation and changing variable $s\to 
\varkappa_2^{-1}s$, we have  $\mathring{\mathscr{F}}^{\overline{\lambda,''}}_{\Delta'}\mathring{\mathscr{G}}^\lambda$ fulfills the margin condition and obtain  the result by using the inductive hypothesis.

To deal with the $\mathring{\mathscr{F}}^{\overline{\overline{\lambda,''}}}\mathring{\mathscr{G}}^\lambda$ term, we decompose futher $\mathfrak{O}_2=\mathfrak{O}_2'\cup\mathfrak{O}_2''$ where $\mathfrak{O}_2'=\{\xi: |\xi|\le \mathfrak{r}'-2\mathfrak{r}\}$, $\mathfrak{O}_2''=\mathfrak{O}_2\setminus\mathfrak{O}_2'$. Write correspondingly, 
$$
\mathring{\mathscr{F}}^{\overline{\overline{\lambda,''}}}\mathring{\mathscr{G}}^\lambda=
\mathring{\mathscr{F}}^{\overline{\overline{\lambda,''}}}\mathring{\mathscr{G}}^{\lambda,'}+\mathring{\mathscr{F}}^{\overline{\overline{\lambda,''}}}\mathring{\mathscr{G}}^{\lambda,''}.
$$
For $\mathring{\mathscr{F}}^{\overline{\overline{\lambda,''}}}\mathring{\mathscr{G}}^{\lambda,'}$, we make angular partition as above for $\overline{\overline{\mathfrak{O}''}}_1=\cup_{S''}S''$, and write 
$\mathring{\mathscr{F}}^{\overline{\overline{\lambda,''}}}\mathring{\mathscr{G}}^{\lambda,'}=\sum_{S''}\mathring{\mathscr{F}}^{\lambda,S''}\mathring{\mathscr{G}}^{\lambda,'}$. Note that for each $S''$, we first rotate $S''$ to be centered in the $e_1$ direction and then changing variables $(\xi,s)\to (\varkappa_1' \xi,\varkappa_2^{-1} s)$
with $0<\varkappa_1'<1$ depending only on $\varkappa_1$ and $\mathfrak{r}'$
so that we recover the margin condition in order to use the inductive hypothesis. 

In remains to handle $\mathring{\mathscr{F}}^{\overline{\overline{\lambda,''}}}\mathring{\mathscr{G}}^{\lambda,''}$. We make angular partition for $\overline{\overline{\mathfrak{O}''}}_1=\cup_{S''}S''$ as above and decompose $\mathfrak{O}''_2=\cup_{\mathfrak{D}''}\mathfrak{D}''$ into $\O(n)$ many pieces $\mathfrak{D}''$.  We write correspondingly
$$\mathring{\mathscr{F}}^{\overline{\overline{\lambda,''}}}\mathring{\mathscr{G}}^{\lambda,''}=\sum_{S''}\sum_{\mathfrak{D}''}\mathring{\mathscr{F}}^{\lambda,S''}\mathring{\mathscr{G}}^{\lambda,\mathfrak{D}''}.$$
For each $\mathfrak{D}''$ and $S''$, we first translate $\mathfrak{D}''$ to be centered at the origin,  and  $S''$ is accordingly translated to $\tilde{S}''$. We then rotate $\tilde{S}''$ to the $e_1$ direction and apply the mild scaling as above to recover the margin condition so that the result follows from the induction.  

Unlike all the other above cases, we need to translate the center of $\mathfrak{D}''$ to the origin which makes the argument technically involved. To see this, we let $\xi^{\mathfrak{D}''}$ be the center of $\mathfrak{D}''$. After changing variables $\xi\to \xi^{\mathfrak{D}''}+\xi$, we may write 
\begin{multline*}
\mathring{\mathscr{F}}^{\lambda,S''}(\xxx,t)\mathring{\mathscr{G}}^{\lambda,\mathfrak{D}''}(\xxx,t)
=e^{4\pi i\bigl(x\cdot \xi^{\mathfrak{D}''}-\frac{t}{2\lambda}|\xi^{\mathfrak{D}''}|^2\bigr)}\sum_{k_1,k_2\ge 0}\frac{(t/\lambda^2)^{k_1}}{k_1!}\frac{(t/\lambda^2)^{k_2}}{k_2!}\\
\times
S^\lambda(t)\Bigl[\mathring{f}^{\lambda,S''}_{\mathfrak{D}'',k_1}\Bigr]\bigl(x-t\lambda^{-1}\xi^{\mathfrak{D}''},x_{n+1}\bigr)\,
S^\lambda(t)\Bigl[\mathring{g}^{\lambda}_{\mathfrak{D}'',k_2}\Bigr]\bigl(x-t\lambda^{-1}\xi^{\mathfrak{D}''},x_{n+1}\bigr),
\end{multline*}
where
\begin{align*}
	\widehat{\mathring{f}^{\lambda,S''}_{\mathfrak{D}'',k_1}}(\xi,s)=&\bigl(\mathcal{E}^\lambda(\xi,s;\xi^{\mathfrak{D}''})\bigr)^{k_1}\widehat{\mathring{\mathscr{F}}^{\lambda,S''}}(\xi^{\mathfrak{D}''}+\xi,s,0),\\
		\widehat{\mathring{g}^{\lambda}_{\mathfrak{D}'',k_2}}(\xi,s)=&\bigl(\mathcal{E}^\lambda(\xi,s;\xi^{\mathfrak{D}''})\bigr)^{k_2}\widehat{\mathring{\mathscr{G}}^{\lambda,\mathfrak{D}''}}(\xi^{\mathfrak{D}''}+\xi,s,0),
\end{align*}
with $\mathcal{E}^\lambda\bigl(\xi,s\,;\,\xi^{\mathfrak{D}''}\bigr)=\frac{\lambda s}{\lambda+s}\bigl(\langle\xi,\xi^{\mathfrak{D}''}\rangle+\frac{|\xi^{\mathfrak{D}''}|^2}{2}\bigr)$. Noting that $|t|\lesssim \lambda R$, we may use Minkowski's inequality and then cover $Q^\lambda_R$ with an enlarged $CQ^\lambda_R$, changing variables $x\to x+\lambda^{-1}t\xi^{\mathfrak{D}''}$ to get 
$$
\|\mathring{\mathscr{F}}^{\lambda,S''}\mathring{\mathscr{G}}^{\lambda,\mathfrak{D}''}\|_{Z(Q^\lambda_R)}\lesssim \sum_{k_1,k_2\ge 0}\frac{1}{k_1!k_2!}\bigl\|S^\lambda\bigl[\mathring{f}^{\lambda,S''}_{\mathfrak{D}'',k_1}\bigr]\cdot S^\lambda\bigl[\mathring{g}^{\lambda}_{\mathfrak{D}'',k_2}\bigr]\bigr\|_{Z(CQ^\lambda_R)}
$$
where we have used $R\le \lambda$.

For each $k_1,k_2$, we have
$$
\EEE\Bigl(S^\lambda\bigl[\mathring{f}^{\lambda,S''}_{\mathfrak{D}'',k_1}\bigr]\Bigr)\lesssim\delta,\,\quad\EEE\Bigl(S^\lambda\bigl[\mathring{g}^{\lambda}_{\mathfrak{D}'',k_2}\bigr]\Bigr)\lesssim\delta\,.
$$ 
We first rotate the support of $	\widehat{\mathring{f}^{\lambda,S''}_{\mathfrak{D}'',k_1}}(\cdot,s)$ to the $e_1$ direction and then changing variables $(\xi,s)\to (\varkappa_1''\xi,\varkappa_2 s)$ for some appropriate $\varkappa_1''$ to recover the margin condition. The proof is complete by summing up $k_1,k_2$.

\end{document}